\documentclass[12pt,a4paper,twoside,reqno]{amsart}

\usepackage{amsmath, amssymb, amsthm, enumerate, enumitem}
\usepackage[normalem]{ulem}
\usepackage{xr}
\externaldocument{0-Introduction}
\externaldocument{A-Perelman-generalization}
\externaldocument{D-Main-argument}

% Margins
\topmargin        0  cm
\oddsidemargin   0.4  cm
\evensidemargin  0.4  cm
\textwidth      14.5  cm
\textheight      22   cm

\makeatletter
\def\l@section{\@tocline{1}{10pt}{1pc}{}{}}
\def\l@subsection{\@tocline{2}{0pt}{1pc}{4.6em}{}}
\def\l@subsubsection{\@tocline{3}{0pt}{1pc}{7.6em}{}}
\renewcommand{\tocsection}[3]{%
  \indentlabel{\@ifnotempty{#2}{\makebox[2.3em][l]{%
    \ignorespaces#1 #2.\hfill}}}\textbf{#3}}
\renewcommand{\tocsubsection}[3]{%
  \indentlabel{\@ifnotempty{#2}{\hspace*{2.3em}\makebox[2.3em][l]{%
    \ignorespaces#1 #2.\hfill}}}#3}
\renewcommand{\tocsubsubsection}[3]{%
  \indentlabel{\@ifnotempty{#2}{\hspace*{4.6em}\makebox[3em][l]{%
    \ignorespaces#1 #2.\hfill}}}#3}
\makeatother

\setcounter{tocdepth}{4}

\newcommand{\MM}{\mathcal{M}}

\newcommand{\IR}{\mathbb{R}}

\newcommand{\eps}{\varepsilon}
\newcommand{\ov}[1]{\overline{#1}}

\newcommand{\td}[1]{\widetilde{#1}}

\DeclareMathOperator{\scal}{scal}
\DeclareMathOperator{\Int}{Int}

\DeclareMathOperator{\Ric}{Ric}

\DeclareMathOperator{\area}{area}

\DeclareMathOperator{\tr}{tr}

\DeclareMathOperator{\dist}{dist}

\DeclareMathOperator{\vol}{vol}

\DeclareMathOperator{\DIV}{div}

\newcommand{\nangle}{\sphericalangle}

\newcommand{\EMPTY}[1]{}

\newtheorem{Theorem}{Theorem}[section]
\newtheorem{Lemma}[Theorem]{Lemma}

\newtheorem{Proposition}[Theorem]{Proposition}
\newtheorem{Definition}[Theorem]{Definition}

\theoremstyle{definition}
\newtheorem{Remark}[Theorem]{Remark}
\numberwithin{equation}{section}

%opening
\title[Long-time behavior of 3d Ricci flow --- B]{Long-time behavior of 3 dimensional Ricci flow\\B: Evolution of the minimal area of simplicial complexes under Ricci flow}
\author{Richard H Bamler}
\address{UC Berkeley, Department of Mathematics, 970 Evans Hall, Berkeley, CA 94720, USA}
\email{rbamler@math.berkeley.edu}
\date{\today}

\begin{document}
\begin{abstract}
In this second part of a series of papers on the long-time behavior of Ricci flows with surgery, we establish a bound on the evolution of the infimal area of simplicial complexes inside a 3-manifold under the Ricci flow.
This estimate generalizes an area estimate of Hamilton, which we will recall in the first part of the paper.

We remark that in this paper we will mostly be dealing with non-singular Ricci flows.
The existence of surgeries will not play an important role.
\end{abstract}

\maketitle
\tableofcontents

\section{Introduction and statement of the results}
Consider a closed $3$-manifold $M$ with $\pi_2(M) = 0$, a finite $2$-dimensional simplicial complex $V$ (see Definition \ref{Def:simplcomplex} below for details), possibly with boundary, and a continuous map $f_0 : V \to M$ such that $f_0$ restricted to each edge of $\partial V$ is a smooth immersion.
Suppose that $(g_t)_{t \in [T_1, T_2]}$, $T_1 > 0$ is a Ricci flow (i.e. $\partial_t g_t = - 2 \Ric_{g_t}$) on $M$ such that $\scal_t \geq - \frac{3}{2t}$ for all $t \in [T_1, T_2]$.
For every $t \in [T_1, T_2]$ let
\[ A_t (f_0) : = \inf \big\{ \area_t f' \;\; : \;\; f' \simeq f_0 \;\; \text{relative to $\partial V$} \big\} \]
be the infimum over the time-$t$ areas of all maps $f' : V \to M$ that are homotopic to $f_0$ relative to $\partial V$.
By this we mean that there is a continuous maps $H : V \times [0,1] \to M$ such that $H(\cdot, 0) = f_0$, $H(\cdot, 1) = f'$ and $H(\cdot , s) = f_0$ on $\partial V$ for all $s \in [0,1]$.
Then the main result of this paper is that in the forward barrier sense\footnote{If $h : [a,b) \to \IR$ is a function, $t_0 \in [a,b)$ and $c \in \IR$, then we say that $\frac{d}{dt^+} |_{t_0} h(t) \leq c$ in the \emph{forward barrier sense} if for any $\delta > 0$ the inequality $h(t) \leq h(t_0) + (c + \delta) (t- t_0)$ holds on an interval of the form $[t_0, t_0 + \tau_{t_0, \delta})$.}
\begin{equation} \label{eq:approximateresultevolutionofA}
\frac{d}{dt^+} A_t (f_0) \leq \frac{3}{4t} A_t (f_0) +  C_t, 
\end{equation}
where $C_t$ is a time-dependent constant that only depends on the topology of $V$ and the geometry of $f_0|_{\partial V}$ with respect to the metric $g_t$ in a controlled way.
We refer to Proposition \ref{Prop:areaevolutioninMM} for more details.
In \cite{Bamler-LT-main}, the result of this paper will be applied to simplicial complexes $V$ and maps $f_0$ as constructed in \cite{Bamler-LT-topology} to prove a conjecture of Perelman.

Consider for a moment the case in which $V$ is a compact surface, possibly with boundary.
In this case the estimate (\ref{eq:approximateresultevolutionofA}) is known, or at least folklore.
It follows from an argument due to Hamilton (cf \cite[sec 11]{Ham}), which makes use of the fact that for every time $t \in [T_1, T_2]$ we can choose a time-$t$ minimal map $f_t : V \to M$ whose area is equal to $A_t(f_0)$.
The argument also makes use of the {Gau{\ss}}-Codazzi equations and the Theorem of Gau\ss-Bonnet.
So for example, in the case in which $V$ is closed, the constant $C_t$ becomes $-2\pi\chi(V)$, where $\chi(V)$ denotes the Euler characteristic of $V$.
We remark that even in the surface case Hamilton's argument is not quite sufficient for our particular setting, since we cannot exclude the existence of branch points, i.e. we cannot guarantee that the minimal map $f_t$ is an immersion.
This issue can however be overcome as we demonstrate in Proposition \ref{Prop:evolminsurfgeneral} below, where we will establish the case in which $V$ is a disk.

Consider the general case in which $V$ is a simplicial complex.
An inspection of the arguments described in the previous paragraph shows that if the existence of an area minimizing map $f_t : V \to M$ is guaranteed, then all of Hamilton's estimates can be carried out.
Here we have to make use of the Euler-Lagrange equations for $f_t$ along the edges of $V$, which state that around every edge the faces meet in directions that add up to zero.
This additional set of equations implies that certain boundary integrals arising in the application of Gau\ss-Bonnet cancel each other out.

Unfortunately, an existence and regularity theory for such minimizers $f_t$ does not exist to the author's knowledge and seems to be difficult to achieve.
We note that, however, if we allow the combinatorial structure of $V$ to vary, then a result of Choe (cf \cite{Choe})---which relies heavily on this fact---states: for every Riemannian metric $g$ on $M$, there is a finite, 2-dimensional simplicial complex $V_g$ and a smooth, minimal embedding $f_g : V_g \to M$ such that the complement of $f_g (V_g)$ is a topological ball.
Such embeddings maps would be interesting in the final part \cite{Bamler-LT-main} or our series, but it seems to be difficult to control the number of vertices of $V_g$ and this number influences the bound $C_t$ in the area evolution estimate of $A_t (f_0)$.
In fact, it is very likely that there are metrics $g_1, g_2, \ldots$ on $M$ for which the number of vertices of the corresponding minimal simplicial complex $V_{g_k}$ diverges.

In order to get around this issue, we will employ the following trick.
Instead of looking for a minimizer of the area functional, we will find a minimizer of the perturbed functional
\begin{equation} \label{eq:perturbedareafunctionalinintro}
 f \longmapsto \area f + \lambda \ell (f|_{V^{(1)}}).
\end{equation}
Here $\lambda > 0$ is a small constant, $\ell (f |_{V^{(1)}})$ denotes the sum of the lengths of $f$ restricted to all edges of $V$ and $f : V \to M$ is any map that is homotopic to $f_0$.
The existence and regularity of a minimizer for the perturbed functional follows now easily (apart from some issues arising from possible self-intersections of the $1$-skeleton).
However, the extra term $\lambda \ell (f |_{V^{(1)}})$ introduces an extra term in the Euler-Lagrange equations along each edge of $V$ and hence the boundary integrals in the evolution estimate for the minimum of this perturbed functional will not cancel each other out, but add up to a new term.
Luckily, it will turn out that this term has the right sign to derive an evolution estimate similar to (\ref{eq:approximateresultevolutionofA}).
Now letting $\lambda$ go to $0$, we obtain the desired evolution estimate for $A_t (f_0)$.

This paper is organized as follows.
In section \ref{sec:spheresanddisks} we present Hamilton's area estimate for spheres (see Proposition \ref{Prop:evolsphere}) and for disks (see Prop \ref{Prop:evolminsurfgeneral}).
Both of these estimates will be needed in \cite{Bamler-LT-main}.
For spheres, Hamilton's argument is straight forward and the computations in this case exhibit the idea underlying the subsequent area estimates very clearly.
For disks, an issue arises due to possible branch points, which can be resolved by a trick.
In section \ref{sec:Defpolygonalcomplex} we define simplicial complexes and section \ref{sec:existenceofvminimizers} contains the existence and regularity discussion for maps from simplicial complexes that minimize the perturbed area functional (\ref{eq:perturbedareafunctionalinintro}).
The results of this section will then be used in section \ref{sec:areaevolutionunderRF} to derive the infimal area evolution estimate for simplicial complexes, i.e. the bound (\ref{eq:approximateresultevolutionofA}).
Proposition \ref{Prop:areaevolutioninMM} in that section will be our main result.
We note that sections \ref{sec:Defpolygonalcomplex}-\ref{sec:areaevolutionunderRF} are independent of section \ref{sec:spheresanddisks}.

Observe that most results in this paper will be phrased in terms of Ricci flows with surgery and precise cutoff $\MM$ as introduced in \cite{Bamler-LT-Perelman}.
The reason for this is that we want to apply these results without change in \cite{Bamler-LT-main}.
However, the possible existence of surgeries is inessential and does not create any issues.
For the purposes of this paper it is only important to be familiar with properties (1) and (5) of the Definition of Ricci flows with surgery and precise cutoff (see \cite[Definition \ref{Def:precisecutoff}]{Bamler-LT-Perelman}).
Property (1) ensures that we have the bound $\sec_t \geq - \frac{3}{2t}$ for all times $t$.
And Property (5) implies that areas cannot increase by a trivial surgery.
Specifically this property implies that for every surgery time $T$ whose surgeries are all trivial and every $\chi > 0$ there is some $t_\chi < T$ such that for all $t \in (t_\chi, T)$ there is a $(1+\chi)$-Lipschitz map $\xi : \MM(t) \to \MM(T)$ that is equal to the identity on the part of the manifold that is not affected by the surgery process at time $T$.
We will be able to conclude from this property that quantities of the type $A_t(f_0)$ are lower semi-continuous in time.

We refer to the introduction of \cite{Bamler-LT-Introduction} for acknowledgements and historical remarks.

\section{Area evolution of spheres and disks} \label{sec:spheresanddisks}
In this subsection we recall area estimates for minimal spheres and disks under Ricci flow.
They were first developed by Hamilton (\cite[sec 11]{Ham}).
The estimates needed in this series of papers are however slightly different from those of Hamilton, which is why we have decided to carry out their proofs.

The first proposition gives us an area estimate for $2$-spheres and will be used in the proof of \cite[Proposition \ref{Prop:irreducibleafterfinitetime}]{Bamler-LT-main} to show that after some time, all time-slices in a Ricci flow with surgery are irreducible and all subsequent surgeries are trivial.
\begin{Proposition} \label{Prop:evolsphere}
Let $\MM$ be a (3 dimensional) Ricci flow with surgery and precise cutoff and closed time-slices, defined on the time-interval $[T_1, T_2]$ $(0 < T_1 < T_2)$.
Assume that the surgeries are all trivial and that $\pi_2 (\MM(t)) \not= 0$ for all $t \in [T_1, T_2]$.
For every time $t \in [T_1, T_2]$ denote by $A(t)$ the infimum of the areas of all homotopically non-trivial immersed $2$-spheres.
Then the quantity
\[ t^{1/4} \big( t^{-1} A(t) + 16 \pi \big) \]
is monotonically non-increasing on $[T_1, T_2]$.
Moreover,
\[ T_2 < \Big( 1 +  \frac{1 }{16 \pi}  T_1^{-1} A(T_1) \Big)^4 T_1. \]
\end{Proposition}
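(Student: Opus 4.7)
The plan is to adapt Hamilton's classical minimal-sphere computation from \cite{Ham}. Fix $t \in [T_1, T_2]$. Since $\pi_2(\MM(t)) \neq 0$, bubbling analysis (Sacks--Uhlenbeck) produces a (possibly branched) minimal sphere $f_t \colon S^2 \to \MM(t)$ whose area realizes $A(t)$. Holding $f_t$ fixed and using $\partial_s g_s = -2\Ric_{g_s}$, one has
\[
\frac{d}{ds}\area_s(f_t) \;=\; -\int_{S^2} \tr_{f_t^* T\Sigma}\Ric_{g_s}\, d\area_s,
\]
and since $A(s) \leq \area_s(f_t)$ for $s \geq t$ with equality at $s=t$, this $\area_s(f_t)$ serves as a smooth forward barrier, giving
\[
\frac{d}{dt^+}A(t) \;\leq\; -\int_{S^2}\tr_{T\Sigma}\Ric_{g_t}\, d\area.
\]

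Next I would rewrite the integrand using the minimal-surface structure of $f_t$. Choosing an orthonormal frame $(e_1, e_2, \nu)$ adapted to $\Sigma$, the identity $\scal_M = 2K_M(e_1,e_2) + 2\Ric_M(\nu,\nu)$ together with the codimension-$1$ Gauss equation $K_M(e_1,e_2) = K_\Sigma + \tfrac{1}{2}|A_\Sigma|^2$ (valid because $H = 0$, with $A_\Sigma$ the second fundamental form) yields
\[
\tr_{T\Sigma}\Ric_M \;=\; \scal_M - \Ric_M(\nu,\nu) \;=\; \tfrac{1}{2}\scal_M + K_\Sigma + \tfrac{1}{2}|A_\Sigma|^2.
\]
Dropping the non-negative $|A_\Sigma|^2$ term, inserting the hypothesis $\scal_M \geq -\tfrac{3}{2t}$, and applying Gauss--Bonnet on $S^2$ (where a branch point of local degree $k \geq 1$ contributes $+2\pi(k-1) \geq 0$ to $\int K_\Sigma$, so that $\int K_\Sigma \geq 4\pi$ with the pulled-back cone metric), one arrives at
\[
\frac{d}{dt^+}A(t) \;\leq\; \tfrac{3}{4t}A(t) - 4\pi.
\]

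Set $F(t) := t^{1/4}\bigl(t^{-1}A(t) + 16\pi\bigr)$. Direct differentiation gives $F'(t) = t^{-3/4}\bigl(A'(t) - \tfrac{3}{4t}A(t) + 4\pi\bigr)$, which is $\leq 0$ by the previous display in the forward-barrier sense; this establishes the monotonicity assertion. For the second claim, $A(T_2) \geq 0$ gives $F(T_2) \geq 16\pi\, T_2^{1/4}$, and combined with $F(T_2) \leq F(T_1)$ this rearranges to $T_2 \leq \bigl(1 + \tfrac{1}{16\pi}T_1^{-1}A(T_1)\bigr)^4 T_1$. The inequality is strict because $\pi_2(\MM(T_2)) \neq 0$ forces $A(T_2) > 0$ (the minimizer is a branched sphere of positive area), whence $F(T_2) > 16\pi\, T_2^{1/4}$ strictly.

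The main obstacles will be: (i) producing the minimizer $f_t$ with enough regularity to justify the first-variation computation --- branch points are permitted but must be handled correctly both in differentiating $\area_s(f_t)$ and in the cone-Gauss--Bonnet step, an issue the author explicitly flags for the analogous disk case in Proposition~\ref{Prop:evolminsurfgeneral}; and (ii) propagating the barrier monotonicity of $F$ across trivial surgery times, which should be achieved using Property~(5) of the precise-cutoff definition --- the $(1+\chi)$-Lipschitz map $\xi \colon \MM(t) \to \MM(T)$ for $t$ slightly below a trivial surgery time $T$ --- to deduce lower semi-continuity of $A(\cdot)$ and of $F$ at surgery times.
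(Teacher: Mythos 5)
Your proposal is correct and follows essentially the same Hamilton-style argument as the paper. The only real divergence is how branch points are handled: the paper cites Gulliver and Meeks--Yau (in addition to Sacks--Uhlenbeck) to get an honest minimal \emph{immersion} $f: S^2 \to \MM(t_0)$, so that no cone singularities arise and Gau\ss--Bonnet can be applied directly to the image; the paper merely \emph{remarks} that branch points could alternatively be treated by the cross-with-a-flat-disk trick of Proposition~\ref{Prop:evolminsurfgeneral}. You instead allow a branched minimizer and invoke the conical Gau\ss--Bonnet formula on the domain $S^2$, noting (correctly) that a branch point of local degree $d$ increases $\int_{S^2} K_{f^*g}\,d\vol$ by $2\pi(d-1) \geq 0$, so that $\int K_\Sigma \geq 4\pi$ still holds; this is a perfectly legitimate and arguably cleaner shortcut, and it also automatically handles the $\IR P^2$-image case that the paper treats by ``counting the area twice.'' Everything else --- the first-variation-of-area barrier, the identity $\tr_{T\Sigma}\Ric = \tfrac12\scal + K_M(T\Sigma) = \tfrac12\scal + K_\Sigma + \tfrac12|A|^2$, the lower bound $\scal \geq -\tfrac{3}{2t}$, the monotonicity of $t^{1/4}(t^{-1}A + 16\pi)$, the lower semi-continuity of $A$ across trivial surgery times via Property~(5) of the precise-cutoff definition, and the strict bound on $T_2$ from $A(T_2) > 0$ --- matches the paper's proof.
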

\begin{proof}
Compare also with \cite[Lemmas 18.10 and 18.11]{MTRicciflow}.
Let $t_0 \in [T_1, T_2)$.
By \cite{SU81} and \cite{Gul} or \cite{MY}, there is a non-contractible, conformal, minimal immersion $f : S^2 \to \MM(t_0)$ with $\area_{t_0} f = \area_{S^2} f^* (g(t_0)) = A(t_0)$.
We remark, that using the methods in the proof of Proposition \ref{Prop:evolminsurfgeneral} below, it is enough to assume that $f$ is only smooth.
Call $\Sigma = f(S^2) \subset \MM(t)$.
Then $\Sigma$ is either a $2$-sphere or an $\IR P^2$ with a finite number of self-intersections.
We can estimate the infinitesimal change of the area of $\Sigma$ (we count the area twice if $\Sigma$ is an $\IR P^2$) while we vary the metric in positive time direction (and keep $f$ constant!).
Using the $t_0^{-1}$-positivity of the curvature on $\MM(t_0)$, the fact that the interior sectional curvatures are not larger than the ambient ones as well as Gau\ss-Bonnet, we conclude:
\begin{multline*}
\frac{d}{dt^+} \Big|_{t = t_0} \area_{t} (\Sigma ) = - \int_{\Sigma} \tr_{t_0} (\Ric_{t_0} |_{T \Sigma}) d {\vol}_{t_0} \\
= - \frac12 \int_{\Sigma} \scal_{t_0} d {\vol}_{t_0} - \int_{\Sigma} \sec_{t_0}^{\MM(t_0)}(T \Sigma) d {\vol}_{t_0} 
 \leq \frac3{4t_0} \area_{t_0} (\Sigma) - \int_{\Sigma} \sec^\Sigma d {\vol}_{t_0} \\
 \leq \frac3{4t_0} \area_{t_0}(\Sigma) - 2 \pi \chi(\Sigma) = \frac3{4t_0} A(t_0) - 4 \pi.
\end{multline*}
Here, $\sec^{\MM(t_0)}_{t_0}(T\Sigma)$ denotes the ambient sectional curvature of ${\MM(t_0)}$ tangential to $\Sigma$ and $\sec^\Sigma_{t_0}$ denotes the interior sectional curvature of $\Sigma$.
We conclude from this calculation that $\frac{d}{dt^+}|_{t = t_0} (t^{1/4} (t^{-1} A(t) + 16 \pi )) \leq 0$ in the barrier sense and hence, the quantity $t^{1/4} (t^{-1} A(t) + 16 \pi )$ is monotonically non-increasing in $t$ away from the singular times.

We will now show that $A(t)$ is lower semi-continuous.
We can restrict ourselves to the case in which $t_0$ is a surgery time.
Let $t_k \nearrow t_0$ be a sequence converging to $t_0$ and choose minimal $2$-spheres $\Sigma_k \subset \MM(t_k)$ with $\area_{t_k} \Sigma_k = A(t_k)$.
By property (5) of \cite[Definition \ref{Def:precisecutoff}]{Bamler-LT-Perelman}, we find diffeomorphisms $\xi_k : \MM(t_k) \to \MM(t_0)$ that are $(1+\chi_k)$-Lipschitz for $\chi_k \to 0$.
So $A(t_0) \leq \lim \inf_{k \to \infty} (1+\chi_k)^2 A(t_k) = \lim \inf_{k \to \infty} A(t_k)$.

The lower semi-continuity implies that $t^{1/4} (t^{-1} A(t) + 16 \pi )$ is monotonically non-increasing on $[T_1, T_2]$.
The bound on $T_2$ follows from the fact that $A(T_2) > 0$.
\end{proof}

In the next proposition we estimate the area evolution of minimal disks that are bounded by a given loop of controlled geodesic curvature.
This fact will be used in the main part of the proof of \cite[Theorem \ref{Thm:LT0-main-1}]{Bamler-LT-Introduction}, which can be found in \cite{Bamler-LT-main}, to exclude the long-time existence of short contractible loops as asserted in \cite[Proposition \ref{Prop:structontimeinterval}]{Bamler-LT-main}.
Note that in contrast to Hamilton's setting, we cannot exclude the existence of branch points at the boundary of these disks.
This difference creates some analytical difficulties.

\begin{Proposition} \label{Prop:evolminsurfgeneral}
Let $\MM$ be a (3 dimensional) Ricci flow with surgery and precise cutoff and closed time-slices, defined on the time-interval $[T_1, T_2]$ $(T_2 > T_1 > 0)$.
Assume that all surgeries of $\MM$ are trivial.

Let $\gamma_t  \subset \MM(t)$ be a time-dependent embedded, disjoint loop in $\MM(t)$ that does not hit surgery times and is stationary in time (i.e. between two surgery times, $\gamma_t$ to a fixed loop).
Assume moreover that $\gamma_t$ is contractible in $\MM(t)$ for all $t \in [T_1, T_2]$ and denote by $A(t)$ the infimum of the areas of all smooth maps $f : D^2 \to \MM(t)$ whose restriction to $\partial D^2 = S^1$ parameterizes the loop $\gamma_t$.

Assume that there are constants $\Gamma, a > 0$ such that for all $t \in [T_1, T_2]$
\begin{enumerate}[label=(\roman*)]
\item the geodesic curvatures along $\gamma_t$ satisfy the bound $|\kappa_{\gamma_t, t}| < \Gamma t^{-1/2}$,
\item the length of $\gamma_t$ satisfies the bound $\ell(\gamma_t) < a t^{-1/2}$,
\end{enumerate}

Then the quantity
\[ t^{1/4} \big( t^{-1} A(t) + 4 (2\pi  - a \Gamma ) \big) \]
is non-increasing on $[T_1, T_2]$.

In particular, if $a \Gamma < 2 \pi$, then
\[ T_2 < \Big( 1 +  \frac{ T_1^{-1} A(T_1) }{4 (2\pi - a \Gamma )} \Big)^4 T_1. \]
\end{Proposition}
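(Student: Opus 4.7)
The plan is to mimic Proposition \ref{Prop:evolsphere}: at each non-surgery time $t_0 \in [T_1, T_2]$, produce a conformal area-minimising map $f : D^2 \to \MM(t_0)$ realising $A(t_0)$, compute $\frac{d}{dt^+}\area_t f\,\big|_{t_0}$ with $f$ frozen, and combine the scalar-curvature barrier, the Gauss equation for minimal surfaces, and Gauss--Bonnet on $D^2$ to produce the forward barrier inequality
\[
\frac{d}{dt^+}\bigg|_{t_0} A(t) \;\leq\; \frac{3}{4 t_0} A(t_0) - (2\pi - a\Gamma),
\]
which a routine calculation shows is equivalent to $(t^{1/4}(t^{-1}A + 4(2\pi - a\Gamma)))' \leq 0$. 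Lower semi-continuity at surgery times will then be handled verbatim as in Proposition \ref{Prop:evolsphere} via property (5) of precise cutoff, promoting the barrier monotonicity to monotonicity on all of $[T_1, T_2]$, and the bound on $T_2$ when $a\Gamma < 2\pi$ will follow from $A(T_2) \geq 0$ applied to the monotone quantity.

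For existence, Morrey's solution of the Plateau problem in the closed Riemannian 3-manifold $(\MM(t_0), g_{t_0})$ with boundary data $\gamma_{t_0}$ yields a conformal harmonic $f : D^2 \to \MM(t_0)$ with $f|_{\partial D^2}$ parametrising $\gamma_{t_0}$ monotonically and $\area_{t_0} f = A(t_0)$, and Hildebrandt--Heinz boundary regularity ensures $f$ is smooth up to $\partial D^2$. Interior branch points can be excluded in a 3-manifold target, but boundary branch points cannot, and handling them will be the main analytical obstacle. Freezing $f$ and varying only $g_t$ then gives, exactly as in Proposition \ref{Prop:evolsphere},
\[
\frac{d}{dt^+}\bigg|_{t_0}\area_t f \;\leq\; \frac{3}{4t_0} A(t_0) - \int_{D^2} \sec^{\MM(t_0)}(T\Sigma)\, d\vol_\Sigma,
\]
using property (1) of precise cutoff; and minimality (so $\tr II = 0$ forces $\det II \leq 0$) combined with the Gauss equation gives the pointwise bound $\sec^\Sigma \leq \sec^{\MM}(T\Sigma)$, so the ambient integral is dominated by $-\int K^\Sigma\, d\vol_\Sigma$.

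The hard step is then a Gauss--Bonnet inequality on $D^2$ that tolerates boundary branch points. Through $f^* g_{t_0}$ the disk acquires a cone singularity of cone angle $\pi m_p$ at each boundary branch point $p$ of order $m_p \geq 1$, with the two straight edges of the half-cone lying along $\partial D^2$. The plan is to excise a small half-disk of radius $\eps$ around each branch point, apply the smooth Gauss--Bonnet formula on the complement, and let $\eps \to 0$: a direct model computation shows that the boundary integral of $\kappa^\Sigma_g$ along the excised semicircle tends to $\pi m_p$ while the two corner contributions sum to $\pi$, so the net branch-point defect $\pi(m_p-1) \geq 0$ appears on the favourable side. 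One thus obtains the one-sided inequality
\[
\int_{D^2} K^\Sigma\, d\vol_\Sigma + \int_{\partial D^2} \kappa^\Sigma_g\, ds \;\leq\; 2\pi\chi(D^2) = 2\pi.
\]
Finally, because the projection of $\gamma''$ onto $T\Sigma$ has norm at most $|\gamma''|$, hypothesis (i) yields $|\kappa^\Sigma_g| \leq |\kappa^{\MM}_g| < \Gamma t_0^{-1/2}$ along $\partial D^2$, and combined with hypothesis (ii) this bounds $\int_{\partial D^2}|\kappa^\Sigma_g|\, ds$ by $a\Gamma$. Substituting into the chain of inequalities produces the forward barrier inequality, and hence the proposition.
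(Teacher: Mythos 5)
Your overall strategy is a genuinely different route from the paper's. The paper handles boundary branch points by embedding the (possibly branched) minimal disk into the product $\MM(t_0) \times (D^2, \varepsilon^2 g_{\eucl})$ via $f_\varepsilon = (f, \id)$: this is a conformal harmonic \emph{embedding}, so Gauss--Bonnet applies classically without any cone points, and one then takes $\varepsilon \to 0$. You instead propose to work with the pullback metric $f^*g$ on $D^2$ directly, treating the boundary branch points as conical singularities and excising small half-disks. This is a legitimate alternative, and the branch defect does enter with the favourable sign --- but your written computation gets that sign wrong, and the resulting inequality is stated in a direction that breaks the chain.

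Concretely: near a boundary branch point $p$ where $f \sim z^{m_p}$, the pullback metric is conformal to $|z|^{2(m_p-1)}|dz|^2$. For the domain $D^2 \setminus B(p,\varepsilon)$, the excised semicircle is an \emph{inner} boundary component, so its geodesic curvature integral tends to $-\pi m_p$ (not $+\pi m_p$); the two corners contribute $+\pi$. Gauss--Bonnet on $D^2 \setminus B(p,\varepsilon)$ therefore yields, in the limit,
\[
\int_{D^2} K^\Sigma\, d\vol + \int_{\partial D^2} \kappa^\Sigma_g\, ds \;=\; 2\pi + \pi\sum_p (m_p - 1) \;\geq\; 2\pi,
\]
i.e.\ the inequality goes the \emph{opposite} way from what you wrote ($\leq 2\pi$). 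And this is the direction you actually need: you need a \emph{lower} bound on $\int_{D^2} K^\Sigma$ to feed into $\int \sec^{\MM}(T\Sigma) \geq \int K^\Sigma$, which then gives an upper bound on $-\int \sec^{\MM}(T\Sigma)$ and hence on $\frac{d}{dt^+}\area_t f$. With your stated $\int K^\Sigma + \int \kappa^\Sigma_g \leq 2\pi$, you only obtain an \emph{upper} bound on $\int K^\Sigma$, which does not chain with $\int \sec^{\MM} \geq \int K^\Sigma$ and leaves the argument broken. Your phrase ``the net branch-point defect $\pi(m_p - 1) \geq 0$ appears on the favourable side'' shows you grasped the right heuristic, but the displayed inequality contradicts it. Fix the sign of the semicircle contribution and flip the inequality to $\geq 2\pi$, then substitute $\int \kappa^\Sigma_g \leq a\Gamma$ to get $\int K^\Sigma \geq 2\pi - a\Gamma$; the rest of the argument (scalar curvature barrier, Gauss equation with $\det II \leq 0$, lower semi-continuity across surgeries) is correct as you describe it. You should also note that the excision argument requires some care to justify that the curvature integral $\int_{D^2} K^\Sigma\, d\vol$ converges and the corner angles converge to $\pi/2$ in the degenerate metric; these points are technical but non-trivial, and they are precisely what the paper's product-manifold trick sidesteps.
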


\begin{proof}
Let $t_0 \in [T_1, T_2]$.
By \cite{Mor} we find a time-$t_0$ area minimizing continuous map $f : D^2 \to \MM(t_0)$ that is smooth on $\Int D^2$ and whose restriction to the boundary $\partial D^2$ parameterizes $\gamma_{t_0}$. Moreover, $f$ is almost conformal and harmonic on $\Int D^2$ and we have $A(t_0) = \area f^*(g(t_0))$.
Next, we use use \cite{HH} to conclude that $f$ is even smooth up to the boundary and an immersion away from finitely many branch points.

Analogously as in the proof of Proposition \ref{Prop:evolsphere}, we can carry out the first part of the computation of the infinitesimal change of the area of $f$ as we vary the metric only:
\begin{multline} \label{eq:areaestimate1}
 \frac{d}{dt} \Big|_{t = t_0} \area f^*(g(t)) = - \int_{D^2} \tr f^*(\Ric_{t_0}^{\MM(t_0)}) \\
 \leq \frac{3}{4t_0} A(t_0) - \int_{D^2}  \sec^{\MM(t_0)}( df) d {\vol}_{f^* ( g(t_0) )},
\end{multline}
where $\sec^{\MM(t_0)} (df)$ denotes the sectional curvature in the normalized tangential direction of $f$. 
Observe that the last integrand is a continuous function on $D^2$ since the volume form vanishes wherever this tangential sectional curvature is not defined.

In order to avoid issues arising from possible branch points (especially on the boundary of $D^2$), we employ the following trick (inspired by \cite{PerelmanIII}):
Let $\varepsilon > 0$ be a small constant and consider the flat Riemannian disk $(D_\varepsilon = D^2, \varepsilon^2 g_{\textnormal{eucl}})$.
The identity map $h_\varepsilon : D^2 \to D_\varepsilon$ is a conformal and harmonic diffeomorphism and hence the map $f_\varepsilon = (f, h_\varepsilon) : D^2 \to \MM(t_0) \times D_\varepsilon$ is a conformal and harmonic \emph{embedding}.
Denote its image by $\Sigma_\varepsilon = f_\varepsilon(D^2)$.
Since the sectional curvatures on the target manifold $\MM(t_0) \times D_\varepsilon$ arise from pulling back the sectional curvatures on $\MM(t_0)$ via the projection onto the first factor, we have
\begin{equation} \label{eq:areaestimate2}
\lim_{\varepsilon \to 0} \int_{\Sigma_\varepsilon} \sec^{\MM(t_0) \times D_\varepsilon} ( T \Sigma_\varepsilon) d {\vol}_{t_0} =  \int_{D^2}  \sec^{\MM(t_0)}( df) d {\vol}_{f^*(g(t_0))}.
\end{equation}
We can now proceed as in the proof of Proposition \ref{Prop:evolsphere}, using the fact that the interior sectional curvatures of $\Sigma_\varepsilon$ are not larger than the corresponding ambient ones as well as the Theorem of Gau\ss-Bonnet:
\begin{equation} \label{eq:areaestimate3}
 \int_{\Sigma_\varepsilon} \sec^{\MM(t_0) \times D_\varepsilon} ( T \Sigma_\varepsilon) d {\vol}_{t_0} \geq \int_{\Sigma_\varepsilon} \sec^{\Sigma_\varepsilon} ( T \Sigma_\varepsilon) d {\vol}_{t_0} = 2 \pi \chi(\Sigma_\varepsilon) + \int_{\partial \Sigma_\varepsilon} \kappa^{\Sigma_\varepsilon}_{\partial \Sigma_\varepsilon, t_0} d s_{t_0}.
\end{equation}
Here $\kappa^{\Sigma_\varepsilon}_{\partial \Sigma_\varepsilon, t_0}$ denotes the intrinsic geodesic curvature of $\partial \Sigma_{\varepsilon, t_0}$ within $\Sigma_{\varepsilon, t_0}$.
Note that $\chi (\Sigma_\varepsilon) = \chi (D^2) = 1$.

We now estimate the last integral.
Let $\gamma_{t_0, \varepsilon} : S^1(l_{t_0, \varepsilon}) \to \partial \Sigma_{\varepsilon}$, $i = 1, \ldots, m$ be a unit-speed parameterization of the boundary of $\Sigma_\varepsilon$.
Denote by $\gamma^{\MM(t_0)}_{t_0, \varepsilon}(s)$ its component function in $\MM(t_0)$ and by $\gamma^{D_\varepsilon}_{t_0, \varepsilon}(s)$ that in $D_\varepsilon$.
Furthermore, let $\nu_{t_0, \varepsilon}(s)$ be the outward-pointing unit-normal field along $\gamma_{t_0, \varepsilon}(s)$ that is tangent to $\Sigma_{\varepsilon}$.
As before, denote by $\nu^{\MM(t_0)}_{t_0, \varepsilon}(s)$ and $\nu^{D_\varepsilon}_{t_0, \varepsilon} (s)$ the components in the direction of $\MM (t_0)$ and $D_\varepsilon$, respectively.
Note that
\begin{equation} \label{eq:gammasnu22}
 0 = \big\langle  \gamma_{t_0, \varepsilon}' (s), \nu_{t_0, \varepsilon} (s) \big\rangle = \big\langle \big( \gamma_{t_0, \varepsilon}^{\MM(t_0)} \big)' (s) , \nu^{\MM(t_0)}_{t_0, \varepsilon} (s) \big\rangle + \big\langle \big( \gamma_{t_0, \varepsilon}^{D_\varepsilon} \big)' (s), \nu^{D_\varepsilon}_{t_0, \varepsilon} (s) \big\rangle 
\end{equation}
Since $f$ is conformal, we obtain that
\begin{multline*}
  \big\langle \big( \gamma_{t_0, \eps}^{\MM (t_0)} \big)' (s), \nu^{\MM (t_0)}_{t_0, \eps} (s) \big\rangle = \big\langle df \big( \big(\gamma^{D_\eps}_{t_0, \eps} \big)' (s) \big), df \big( \nu^{D_\eps}_{t_0, \eps} (s) \big) \big\rangle \\
  = \lambda_{t_0, \eps} (s) \big\langle \big(\gamma^{D_\eps}_{t_0, \eps} \big)' (s), \nu^{D_\eps}_{t_0, \eps} (s) \big\rangle
\end{multline*}
for some $\lambda_{t_0, \eps} (s) \geq 0$.
So the first summand on the right hand side of (\ref{eq:gammasnu22}) is a non-negative multiple of the second summand.
So these summands cannot have opposite signs and hence
\begin{equation} \label{eq:gammaorthogonaltonu2}  \big\langle \big( \gamma_{t_0, \varepsilon}^{\MM(t_0)} \big)' (s) , \nu^{\MM(t_0)}_{t_0, \varepsilon} (s) \big\rangle = \big\langle \big( \gamma_{t_0, \varepsilon}^{D_\varepsilon} \big)' (s), \nu^{D_\varepsilon}_{t_0, \varepsilon} (s) \big\rangle = 0. \end{equation}
Now note that $\gamma_{t_0, \varepsilon}^{\MM(t_0)}$ is a parameterization of $\gamma_{t_0}$, whose geodesic curvature is bounded by $\Gamma t_0^{-1}$.
Moreover, the geodesic curvature of $\gamma_{t_0, \varepsilon}^{D_\varepsilon}$ is equal to $\varepsilon^{-1}$.
Denote the geodesic curvature of $\gamma_{t_0}$ in $\MM (t_0)$ at any $p \in \gamma_{t_0}$ by $\kappa_{\gamma_{t_0}, t_0} (p)$.
Using (\ref{eq:gammaorthogonaltonu2}), we conclude that
\begin{multline} \label{eq:kappaintwoparts}
 \int_{\partial\Sigma_\varepsilon} \kappa_{\partial \Sigma_\varepsilon, t_0}^{\Sigma_\varepsilon} d s_{t_0} = - \int_0^{l_{t_0, \varepsilon}} \Big\langle \frac{D}{ds} \Big( \frac{d}{ds}  \gamma^{\MM(t_0)}_{t_0, \varepsilon} (s) \Big), \nu^{\MM(t_0)}_{t_0, \varepsilon}(s) \Big\rangle ds \\
 - \int_0^{l_{t_0, \varepsilon}} \Big\langle \frac{D}{ds} \Big( \frac{d}{ds}  \gamma^{D_\varepsilon}_{t_0, \varepsilon} (s) \Big), \nu^{D_\varepsilon}_{t_0, \varepsilon}(s) \Big\rangle ds \\
 = \int_0^{l_{t_0, \varepsilon}} \big\langle \kappa_{\gamma_{t_0}, t_0} (\gamma^{\MM(t_0)}_{t_0, \varepsilon} (s)), \nu_{t_0, \varepsilon}^{\MM (t_0)} (s) \big\rangle  \big| \big( \gamma_{t_0, \varepsilon}^{\MM (t_0)} \big)' (s) \big|^2  ds \\
 + \int_0^{l_{t_0, \varepsilon}} \varepsilon^{-1} \big| \big( \gamma_{t_0, \varepsilon}^{D_\varepsilon} \big)' (s) \big|_{\eps^2 g_{\textnormal{eucl}}}^2 \big| \nu_{t_0, \varepsilon}^{D_\varepsilon} (s) \big|_{\eps^2 g_{\textnormal{eucl}}} ds. 
\end{multline}
As indicated, the norms of the vectors in the last integral are taken with respect to $\eps^2 g_{\textnormal{eucl}}$.

We now analyze the first integral on the right-hand side of (\ref{eq:kappaintwoparts}) by substituting $\gamma_{t_0, \varepsilon}^{\MM (t_0)} : S^1 (l_{t_0, \eps}) \to \gamma_{t_0}$ by a unit speed parameterization $\td\gamma_{t_0} : S^1 (l_{t_0}) \to \gamma_{t_0}$, where $l_{t_0} = \ell (\gamma_{t_0})$.
In doing this, we have to replace $s$ by $\big( \gamma_{t_0, \varepsilon}^{\MM (t_0)} \big)^{-1} ( \td\gamma_{t_0} (s))$, where $\big( \gamma_{t_0, \varepsilon}^{\MM (t_0)} \big)^{-1} : \gamma_{t_0} \to S^1$ denotes the inverse map of $\gamma_{t_0, \varepsilon}^{\MM (t_0)}$.
Moreover, the length element $ds$ changes by a factor of $\big| \big( \gamma_{t_0, \varepsilon}^{\MM (t_0)} \big)' (s) \big|^{-1}$.
So we obtain
\begin{multline} \label{eq:firstintestimate}
  \int_0^{l_{t_0, \varepsilon}} \big\langle \kappa_{\gamma_{t_0}, t_0} (\gamma^{\MM(t_0)}_{t_0, \varepsilon} (s)), \nu_{t_0, \varepsilon}^{\MM (t_0)} (s) \big\rangle  \big| \big( \gamma_{t_0, \varepsilon}^{\MM (t_0)} \big)' (s) \big|^2  ds \\
   = \int_0^{l_{t_0}}  \big\langle \kappa_{\gamma_{t_0}, t_0} ( \td\gamma_{t_0} (s) ), \nu_{t_0, \varepsilon}^{\MM (t_0)} \big( \big( \gamma_{t_0, \varepsilon}^{\MM (t_0)} \big)^{-1} ( \td\gamma_{t_0} (s)) \big) \big\rangle \\ \cdot  
    \big| \big( \gamma_{t_0, \varepsilon}^{\MM(t_0)} \big)' \big( \big( \gamma_{t_0, \varepsilon}^{D_\varepsilon} \big)^{-1} (\td\gamma_{t_0} (s)) \big) \big|ds.
\end{multline}
Next, we estimate the last integral in (\ref{eq:kappaintwoparts}) by substituting the map $\gamma_{t_0, \eps}^{D_\eps} : \partial D^2 \to \partial D^2$ by the identity.
Similarly as before, we have to replace $s$ by $\big( \gamma_{t_0, \eps}^{D_\eps} \big)^{-1} (s)$, where $\big( \gamma_{t_0, \eps}^{D_\eps} \big)^{-1} : \partial D^2 \to \partial D^2$ denotes the inverse map of $\gamma_{t_0, \eps}^{D_\eps}$, and change $ds$ by a factor of $\big| \big( \gamma_{t_0, \eps}^{D_\eps} \big)' (s)|^{-1}_{g_\textnormal{eucl}}$.
Additionally, using the fact that the norm of $\nu_{t_0, \varepsilon}^{D_\varepsilon} (s)$ with respect to the metric $\eps^2 g_{\textnormal{eucl}}$ is bounded by $1$, we obtain
\begin{multline} \label{eq:secondintestimate}
 \int_0^{l_{t_0, \varepsilon}} \varepsilon^{-1} \big| \big( \gamma_{t_0, \varepsilon}^{D_\varepsilon} \big)' (s) \big|_{\eps^2 g_{\textnormal{eucl}}}^2 \big| \nu_{t_0, \varepsilon}^{D_\varepsilon} (s) \big|_{\eps^2 g_{\textnormal{eucl}}} ds \leq \eps \int_0^{l_{t_0, \eps}} \big| \big( \gamma_{t_0, \varepsilon}^{D_\varepsilon} \big)' (s) \big|_{ g_{\textnormal{eucl}}}^2 ds  \\
 = \int_{\partial D^2} \big| \big( \gamma_{t_0, \varepsilon}^{D_\varepsilon} \big)' \big( \big( \gamma_{t_0, \varepsilon}^{D_\varepsilon} \big)^{-1} (s) \big) \big|_{\eps^2 g_{\textnormal{eucl}}} ds.
\end{multline}
Now we let $\varepsilon \to 0$.
Observe that away from the branch points of $f$
\[ \lim_{\varepsilon \to 0} \nu_{t_0, \varepsilon}^{\MM (t_0)} \big( \big( \gamma_{t_0, \varepsilon}^{\MM (t_0)} \big)^{-1} (s) \big) = \nu^f_{t_0} (s), \]
where $\nu_{t_0}^f$ is the outward-pointing unit normal vector field along $\gamma_{t_0}$ that is tangential to $f$.
Moreover, away from the branch points of $f$
\[ \lim_{\varepsilon \to 0} \big| \big( \gamma_{t_0, \varepsilon}^{\MM(t_0)} \big)' \big( \big( \gamma_{t_0, \varepsilon}^{D_\varepsilon} \big)^{-1} (s) \big) \big|= 1, \qquad  \lim_{\varepsilon \to 0}  \big| \big( \gamma_{t_0, \varepsilon}^{D_\varepsilon} \big)' \big( \big( \gamma_{t_0, \varepsilon}^{D_\varepsilon} \big)^{-1} (s) \big) \big|_{\eps^2 g_{\textnormal{eucl}}} = 0
\]
So, using (\ref{eq:kappaintwoparts}), (\ref{eq:firstintestimate}) and (\ref{eq:secondintestimate}) gives us
\begin{equation} \label{eq:boundaryintegralsexample}
  \lim_{\varepsilon \to 0} \int_{\partial\Sigma_\varepsilon} \kappa_{\partial \Sigma_\varepsilon, t_0}^{\Sigma_\varepsilon}  d s_{t_0} = \int_{\gamma_{t_0}} \big\langle \kappa_{\gamma_{t_0}, t_0} (s), \nu_{t_0}^f (s) \big\rangle ds_{t_0} \geq - \Gamma t_0^{-1/2} \ell (\gamma_{t_0}) > - a \Gamma.
\end{equation}
Combining this with (\ref{eq:areaestimate1}), (\ref{eq:areaestimate2}) and (\ref{eq:areaestimate3}) yields
\[ \frac{d}{dt} \Big|_{t = t_0} \area f^* ( g(t)) \leq \frac{3}{4 t_0} A(t_0) - 2 \pi  + a \Gamma . \]
So in the barrier sense
\[ \frac{d}{dt^+} \Big|_{t = t_0} A(t) \leq \frac{3}{4t_0} A(t_0) - 2\pi + a \Gamma. \]
Thus
\[
 \frac{d}{dt^+} \big[ t^{1/4} \big( t^{-1} A(t) + 4 (2\pi  -  a \Gamma ) \big) \big] \leq 0.
\]

Analogously as in the proof of Proposition \ref{Prop:evolsphere}, we conclude that $A(t)$ is lower semi-continuous.
The desired monotonicity follows now immediately.
The bound on $T_2$ follows again from the fact that $A(T_2) > 0$.
\end{proof}

\section{Simplicial complexes} \label{sec:Defpolygonalcomplex}
We briefly recall the notion of simplicial complexes, which will be used throughout the whole paper.
Note that in the following we will only be interested in simplicial complexes that are $2$ dimensional, pure and locally finite.
For brevity we will always implicitly assume these properties when referring to the term ``simplicial complex''.

\begin{Definition}[simplicial complex] \label{Def:simplcomplex}
A \emph{$2$-dimensional simplicial complex} $V$ is a topological space that is the union of embedded, closed $2$-simplices (triangles), $1$-simplices (intervals) and $0$-simplices (points) such that any two distinct simplices are either disjoint or their intersection is equal to another simplex whose dimension is strictly smaller than the maximal dimension of both simplices.
$V$ is called \emph{finite} if the number of these simplices is finite.

In this paper, we assume $V$ moreover to be \emph{locally finite} and \emph{pure}.
The first property demands that every simplex of $V$ is contained in only finitely many other simplices and the second property states that every $0$ or $1$-dimensional simplex is contained in a $2$-simplex.
We will also assume that all $2$ and $1$-simplices are equipped with differentiable parameterizations that are compatible with respect to restriction.

We will often refer to the $2$-simplices of $V$ as \emph{faces}, the $1$-simplices as \emph{edges} and the $0$-simplices as \emph{vertices}.
The \emph{$1$-skeleton} $V^{(1)}$ is the union of all edges and the \emph{$0$-skeleton} $V^{(0)}$ is the union of all vertices of $V$.
The \emph{valency} of an edge $E \subset V^{(1)}$ denotes the number of adjacent faces, i.e. the number of $2$-simplices that contain $E$.
The \emph{boundary} $\partial V$ is the union of all edges of valency $1$.
\end{Definition}

We will also use the following notion for maps from simplicial complexes into manifolds.

\begin{Definition}[piecewise smooth map]
Let $V$ be a simplicial complex, $M$ an arbitrary differentiable manifold (not necessarily $3$-dimensional) and $f : V \to M$ a continuous map.
We call $f$ \emph{piecewise smooth} if $f$ restricted to the interior of each face of $V$ is smooth and bounded in $W^{1,2}$ and if $f$ restricted to each edge $E \subset V^{(1)}$ is smooth away from finitely many points.
\end{Definition}

Given a Riemannian metric $g$ on $M$ and a sufficiently regular map $f : V \to M$ (e.g. piecewise smooth) we define its area, $\area (f)$, to be the sum of $\area (f|_{\Int F})$ over all faces $F \subset V$ and the length of the $1$-skeleton $\ell (f |_{V^{(1)}})$ to be the sum of the length $\ell (f|_E)$ over all edges $E \subset V^{(1)}$.

\section{Existence of minimizers of simplicial complexes} \label{sec:existenceofvminimizers}
\subsection{Introduction and overview}
Let in this section $(M, g)$ always be a compact Riemannian manifold (not necessarily $3$ dimensional) with $\pi_2 (M) = 0$.
We will also fix the following notation: for every continuous contractible loop $\gamma : S^1 \to M$ we denote by $A(\gamma)$ the infimum over the areas of all continuous maps $f : D^2 \to M$ that are continuously differentiable on the interior of $D^2$, bounded in $W^{1,2}$ and for which $f |_{\partial D^2} = \gamma$.

Consider a finite simplicial complex $V$ as well as a continuous map $f_0 : V \to M$ such that $f_0 |_{\partial V}$ is a smooth embedding.
The goal of this section is motivated by the question of finding an area-minimizer within the same homotopy class of $f_0$, i.e. a map $f : V \to M$ that is homotopic to $f_0 : V \to M$ relative to $\partial V$ and whose area is equal to 
\[ A(f_0) : = \inf \big\{ \area f' \;\; : \;\; f' \simeq f_0 \;\; \text{relative to $\partial V$} \big\}. \]
(Here the maps $f' : V \to M$ are assumed to be continuous and continuously differentiable when restricted to $V \setminus V^{(1)}$ and $V^{(1)}$ as well as bounded in $W^{1,2}$ when restricted to each face of $V$.)
This problem, however, seems to be very difficult, since it is not clear how to control e.g. the length of the $1$-skeleta of a sequence of minimizers.

To get around these analytical issues, we instead seek to minimize the quantity $\area (f) + \ell (f |_{V^{(1)}})$.
Here $ \ell (f |_{V^{(1)}})$ denotes the sum of the lengths of all edges of $V$ under $f$.
It will turn out that this change has no negative effect when we apply our results to the Ricci flow in section \ref{sec:areaevolutionunderRF}.
To summarize, we are looking for maps $f : V \to M$ that are homotopic to $f_0$ relative $\partial V$ and for which $\area (f) + \ell (f |_{V^{(1)}})$ is equal (or close) to
\[ A^{(1)} (f_0) := \inf \big\{ \area (f') + \ell( f' |_{V^1} )  \;\; : \;\; f' \simeq f_0 \;\; \text{relative to $\partial V$} \big\}. \]
We will be able to show that such a minimizer exists in a certain sense.
More specifically, we will find a map $f : V^{(1)} \to M$ of regularity $C^{1,1}$ on the $1$-skeleton that can be extended onto $V$ to a minimizing sequence for $A^{(1)}$.
This implies that the sum of $A(f|_{\partial F})$ over all faces $F \subset V$ plus $\ell (f)$ is equal to $A^{(1)}(f_0)$.
So the existence problem for $f$ is reduced to solving the Plateau problem for each loop $f |_{\partial F}$.
The only difficulty that we may encounter then is that $f |_{\partial F}$ might a priori have (finitely or infinitely many) self-intersections.
Unfortunately, taking this possibility into account makes several arguments quite tedious and might obscure the main idea in a forest of details.

The second goal of this section (see subsection \ref{subsec:1skeletonstruc}) is to understand the geometry of a minimizer along the $1$-skeleton.
In the case in which $f : V^{(1)} \to M$ is injective, our findings can be presented as follows.
In this case we can solve the Plateau problem for the loop $f |_{\partial F}$ for each face $F \subset V$ and extend $f : V^{(1)} \to M$ to a map $f : V \to M$ that is smooth on $V \setminus V^{(1)}$ and $C^{1,1}$ on $V^{(1)}$ and $C^{1, \alpha}$ on every (closed) face away from the vertices.
Consider and edge $E \subset V^{(1)} \setminus \partial V$ of valency $v_E$ and denote by $\kappa : E \to TM$ the geodesic curvature (defined almost everywhere) of $f |_E$ and let $\nu^{(1)}_E, \ldots, \nu^{(v_E)}_E : E \to TM$ be unit vector fields that are normal to $f |_E$ and outward pointing tangential to $f$ restricted to those faces $F \subset V$ that are adjacent to $E$.
A simple variational argument will then yield the identities
\begin{equation} \label{eq:simplenuiskappa}
 \nu^{(1)}_E +  \ldots + \nu^{(v_E)}_E = \kappa_E \qquad \text{and} \qquad \big\langle \nu^{(1)}_E +  \ldots + \nu^{(v_E)}_E, \kappa_E  \big\rangle \geq 0.
\end{equation}
This set of equalities and inequalities is the second main result of this section and some time is spent on expressing these identities in the case in which the loops $f |_{\partial F}$ are allowed to have self-intersections.
We remark that in the case in which $f |_{V^{(1)}}$ is injective this equality and a bootstrap argument can be used to show that $f$ is actually smooth on each (closed) face away from $V^{(0)}$.

Observe that in general it might happen that two or more edges are mapped to the same segment under $f$ (this could also happen for subsegments of these edges or for subsegments of one and the same edge).
It would then become necessary to take the sum over all faces that are adjacent to either of these edges on the left hand side of (\ref{eq:simplenuiskappa}) and a multiple of $\kappa_E$ on the right hand side of the equation in (\ref{eq:simplenuiskappa}).
These combinatorics become even more involved by the fact that, at least a priori, $f |_{\partial F}$ can for example intersect in a subset of empty interior but positive measure.

All important results of this section will be summarized in Proposition \ref{Prop:existenceofVminimizer}.

\subsection{Construction and regularity of the map on the 1-skeleton} \label{subsec:regularityon1skeleton}
Consider again the given continuous map $f_0 : V \to M$ for which $f_0 |_{\partial V}$ is a smooth embedding and let $f_1, f_2, \ldots : V \to M$ be a minimizing sequence for $A^{(1)}(f_0)$.
More specifically, we want each $f_k$ to be continuous and homotopic to $f_0$ relative $\partial V$, continuously differentiable when restricted to $V \setminus V^{(1)}$ and $V^{(1)}$ as well as bounded in $W^{1,2}$ when restricted to each face and
\[ \lim_{k \to \infty} \big( \area (f_k) + \ell (f_k |_{V^{(1)}} ) \big) = A^{(1)}(f_0). \]
By compactness of $M$ we may assume that, after passing to a subsequence, $f_k |_{V^{(0)}}$ converges pointwise.
Next, observe that every edge $E \subset V^{(1)}$ is equipped with a standard parameterization by an interval $[0,1]$ (see Definition \ref{Def:simplcomplex}).
We can then reparameterize each $f_k$ such that for every edge $E \subset V^{(1)}$ the restriction $f_k |_E$ is parameterized by constant speed.
Since $\ell ( f_k |_E )$ is uniformly bounded, we can pass to another subsequence such that $f_K |_E$ converges uniformly.
So we may assume that $f_k |_{V^{(1)}}$ converges uniformly to a Lipschitz map $f : V^{(1)} \to M$ and that $\ell (f |_{V^{(1)}}) \leq \liminf_{k \to \infty} \ell (f_k |_{V^{(1)}})$.
It is our first goal to derive regularity results for $f$.
Before doing this we characterize the map $f$, so that we can forget about the sequence $f_k$.

\begin{Lemma} \label{Lem:existenceon1skeleton}
The map $f$ is homotopic to $f_0 |_{V^{(1)}}$ relative to $\partial V$ and is parameterized by constant speed and if $F_1, \ldots, F_n$ are the faces of $V$, then
\[ A( f |_{\partial F_1} ) + \ldots + A (f |_{\partial F_n}) + \ell(f) = A^{(1)} (f_0). \]
Moreover, for every continuous map $f' : V^{(1)} \to M$ that is homotopic to $f_0 |_{V^{(1)}}$ relative to $\partial V$ we have
\[ A( f' |_{\partial F_1} ) + \ldots + A (f' |_{\partial F_n}) + \ell(f') \geq A^{(1)} (f_0). \]
\end{Lemma}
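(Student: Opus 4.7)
The plan is to deduce all four claims from the minimizing property of the sequence $f_k$, the hypothesis $\pi_2(M) = 0$, and a standard thin-strip estimate for the behavior of $A(\cdot)$ under uniform convergence of loops.

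The homotopy claim is immediate. Each $f_k|_{V^{(1)}}$ is homotopic to $f_0|_{V^{(1)}}$ rel $\partial V$ by restricting the given homotopy of $f_k$, and for $k$ large a pointwise geodesic homotopy within normal neighborhoods joins $f_k|_{V^{(1)}}$ to $f$ rel $\partial V$; concatenation gives $f \simeq f_0|_{V^{(1)}}$ rel $\partial V$.

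The central step is the equality $\sum_i A(f|_{\partial F_i}) + \ell(f) = A^{(1)}(f_0)$. For the ``$\leq$'' direction, given $\eps > 0$ I would select for each face a filling $g_i$ of $f|_{\partial F_i}$ with $\area(g_i) \leq A(f|_{\partial F_i}) + \eps/n$ and assemble $\widetilde f : V \to M$ by setting $\widetilde f|_{V^{(1)}} := f$ and $\widetilde f|_{F_i} := g_i$ (after a small smoothing of $f$ along $V^{(1)}$ to meet the $C^1$ admissibility requirement in the definition of $A^{(1)}$, at arbitrarily small cost in length and without changing the homotopy class). The key admissibility check, $\widetilde f \simeq f_0$ rel $\partial V$, uses $\pi_2(M) = 0$: on each closed face $F_i \cong D^2$, the combined map on $\partial(F_i \times [0,1]) \cong S^2$ defined by $\widetilde f|_{F_i}$ at one end, $f_0|_{F_i}$ at the other, and the $1$-skeletal homotopy on the side is nullhomotopic, hence extends to the $3$-ball $F_i \times [0,1]$, giving the required face-by-face homotopy. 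Admissibility yields $A^{(1)}(f_0) \leq \area(\widetilde f) + \ell(\widetilde f|_{V^{(1)}}) \leq \sum_i A(f|_{\partial F_i}) + \ell(f) + \eps$, and $\eps \to 0$. For the ``$\geq$'' direction, $\area(f_k) \geq \sum_i A(f_k|_{\partial F_i})$ by definition, and $A$ is lower semicontinuous under uniform convergence of loops with bounded length: a near-minimal filling of $f_k|_{\partial F_i}$ glued to a thin tubular-neighborhood strip of area $O(d_{C^0})$ between $f_k|_{\partial F_i}$ and $f|_{\partial F_i}$ produces a filling of the latter of nearly the same area. Together with lower semicontinuity of length on $V^{(1)}$ this gives
\[ A^{(1)}(f_0) = \lim_k \big(\area(f_k) + \ell(f_k|_{V^{(1)}})\big) \geq \sum_i A(f|_{\partial F_i}) + \ell(f). \]

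The constant-speed claim is then a consequence. The established equality together with the ``$\geq$'' calculation forces $\ell(f|_{V^{(1)}}) = \liminf_k \ell(f_k|_{V^{(1)}})$, whence by edgewise additivity and the a priori bound $\ell(f|_E) \leq \liminf_k \ell(f_k|_E)$ we may pass to a subsequence with $\ell(f_k|_E) \to \ell(f|_E)$ for every edge $E$. Since each $f_k|_E$ is constant-speed, $\ell(f_k|_{[0,s]}) = s \, \ell(f_k|_E)$ and hence $\ell(f|_{[0,s]}) \leq s \, \ell(f|_E)$ by lower semicontinuity; the additivity $\ell(f|_{[0,s]}) + \ell(f|_{[s,1]}) = \ell(f|_E)$ then forces equality, so $f|_E$ is constant-speed. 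Finally, the inequality for an arbitrary $f' : V^{(1)} \to M$ homotopic to $f_0|_{V^{(1)}}$ rel $\partial V$ uses exactly the same filling-and-gluing construction as in the ``$\leq$'' direction, admissibility again reducing to $\pi_2(M) = 0$. The principal obstacle is the admissibility verification for $\widetilde f$, which depends essentially on $\pi_2(M) = 0$ and on arranging the fillings $g_i$ to have the right regularity class; the latter is standard via Morrey's solution of the Plateau problem combined with the small smoothing of $f|_{V^{(1)}}$ above.
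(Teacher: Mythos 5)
Your proof is correct and follows essentially the same route as the paper: it uses the minimizing sequence $f_k$, thin-strip/exponential-map homotopies to transfer near-minimal fillings between uniformly close loops, a filling-and-gluing argument for the reverse inequality, and the forced equality to extract constant speed. The one genuine addition is that you make explicit the admissibility check (that the glued map $\widetilde f$ is homotopic to $f_0$ rel $\partial V$) via $\pi_2(M)=0$ and the extension of a sphere map over each solid cylinder $F_i\times[0,1]$; the paper invokes this implicitly when it writes $\geq A^{(1)}(f_0)$ for the assembled competitor, so this is a useful clarification rather than a departure.
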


\begin{proof}
The fact that $f$ is homotopic to $f_0 |_{V^{(1)}}$ relative to $\partial V$ follows from the uniform convergence.

For every face $F_j$ consider the boundary loop $f |_{\partial F_j} : \partial F_j \approx S^1 \to M$ which is a Lipschitz map.
Recall that the loops $f_k |_{\partial F_j}$ converge uniformly to $f |_{\partial F_j}$.
So using the exponential map and assuming that $k$ is large enough, we can find a homotopy $H_k : \partial F_j \times [0,1] \to M$ between $f_k |_{\partial F_j}$ and $f |_{\partial F_j}$ that is Lipschitz on $\partial F_j \times [0,1]$ and smooth on $\partial F_j \times (0,1)$ and whose area goes to $0$ as $k \to \infty$.
Gluing $H_k$ together with $f_k |_{F_j} : F_j \to M$ and mollifying around the seam yields a continuous map $f^*_{j,k} : F_j \to M$ that is smooth on $\Int F_j$ such that $f^*_{j,k} |_{\partial F_j} = f |_{\partial F_j}$ and such that $\area f^*_{j,k} - \area f_k |_{F_j}$ goes to $0$ as $k \to \infty$ (here we are using the fact that $f_k |_{F_j}$ is bounded in $W^{1,2}$).
Hence $A( f |_{\partial F_j} ) \leq \liminf_{k \to \infty} \area f_k |_{F_j}$ and we obtain
\begin{multline*}
 A( f |_{\partial F_1} ) + \ldots + A (f |_{\partial F_n}) + \ell(f) \\ \leq \liminf_{k \to \infty} \big( \area (f_k |_{\partial F_1}) + \ldots + \area (f_k |_{\partial F_n}) + \ell ( f_k |_{V^{(1)}} ) \big) = A^{(1)} (f_0).
\end{multline*}
For the reverse inequality it remains to establish the last statement of the claim.
This will then also imply that $\lim_{k \to \infty} \ell (f_k |_{\partial V^{(1)}} ) = \ell (f)$ and hence that $f$ is parameterized by constant speed.

Consider a continuous and rectifiable map $f' : V^{(1)} \to M$ that is homotopic to $f_0 |_{V^{(1)}}$ relative to $\partial V$.
We can find smoothings $f'_k : V^{(1)} \to M$ of $f'$ such that $f'_k$ converges uniformly to $f'$ and $\lim_{k \to \infty} \ell (f'_k) = \ell (f')$.
Now for every face $F_j$, we can again find a homotopy $H'_{j, k} : \partial F_j \times [0,1] \to M$ of small area between $f' |_{\partial F_j}$ and $f'_k |_{\partial F_j}$ and by another gluing argument, we can construct continuous maps $f''_{j,k} : F_j \to M$ with $f''_{j, k} |_{\partial F_j} = f'_k |_{\partial F_j}$ that are smooth on $\Int F_j$ such that $\lim_{k \to \infty} \area f''_{j,k} = A ( f' |_{\partial F_j} )$.
Hence, we can extend each $f'_k : V^{(1)} \to M$ to a map $f''_k : V \to M$ of the right regularity such that 
\[ A(f' |_{\partial F_1}) + \ldots + A(f' |_{\partial F_n}) + \ell (f') =  \lim_{k \to \infty} \big( \area(f''_k) + \ell (f''_k |_{V^{(1)}}) \big) \geq A^{(1)} (f_0) . \]
This proves the desired result.
\end{proof}

We also need the following isoperimetric inequality.

\begin{Lemma} \label{Lem:isoperimetricdistancetoaxis}
Let $\gamma : S^1 \to \IR^n$ be a rectifiable loop such that $\gamma$ restricted to the lower semicircle of $S^1$ parameterizes an interval on the $x_1$-axis $x_2 = \ldots = x_n = 0$ and $\gamma$ restricted to the upper semicircle has length $l$.
Denote by $a$ the maximum of the euclidean norm of the $(x_2, \ldots, x_n)$ component of all points on $\gamma$ (i.e. the maximal distance from the $x_1$-axis).
Then $A(\gamma) \leq la$.
\end{Lemma}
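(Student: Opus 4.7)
I would prove this by constructing an explicit disk filling of small area and then smoothing. Parameterize the upper semicircle by arc length $s \in [0, l]$ and write $\gamma(s) = (\gamma_1(s), \gamma_\perp(s))$ with $\gamma_\perp(s) \in \IR^{n-1}$ and $|\gamma_\perp(s)| \leq a$. Since the lower semicircle maps into the $x_1$-axis and $\gamma$ is continuous on all of $S^1$, the two common endpoints satisfy $\gamma_\perp(0) = \gamma_\perp(l) = 0$.

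Next, define $f : [0, l] \times [0, 1] \to \IR^n$ by
\[
  f(s, t) = \bigl( \gamma_1(s),\; (1-t)\,\gamma_\perp(s) \bigr),
\]
which at $t = 0$ reproduces the upper semicircle and at $t = 1$ collapses it orthogonally onto the $x_1$-axis. A direct computation gives $|\partial_s f|^2 \leq \gamma_1'(s)^2 + |\gamma_\perp'(s)|^2 = |\gamma'(s)|^2 = 1$ and $|\partial_t f| = |\gamma_\perp(s)| \leq a$, so the pointwise area form is bounded by $|\partial_s f|\,|\partial_t f| \leq a$, and the total area of $f$ on the rectangle is at most $la$. Moreover, the vertical sides $\{0\} \times [0,1]$ and $\{l\} \times [0,1]$ are mapped to the constant points $\gamma(0)$ and $\gamma(l)$, so they can be collapsed to yield a map from a topological disk.

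The resulting boundary consists of the upper semicircle of $\gamma$ together with the path $s \mapsto (\gamma_1(s), 0)$ on the $x_1$-axis connecting $\gamma(l)$ back to $\gamma(0)$. This return path together with the (reversed) lower semicircle of $\gamma$ forms a closed loop lying entirely in the $1$-dimensional $x_1$-axis; hence it bounds a map of area zero inside the axis itself. Gluing this zero-area patch to the rectangle filling produces a disk filling of the full loop $\gamma$ of area at most $la$, which gives $A(\gamma) \leq la$.

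The only real obstacle is regularity: $A(\gamma)$ is defined as an infimum over maps that are continuous, $C^1$ on $\Int D^2$, and bounded in $W^{1,2}$, whereas our explicit $f$ inherits only the regularity of the rectifiable loop $\gamma$. This can be handled by a standard mollification: approximate $\gamma$ by smooth loops $\gamma^\eps$ with $\ell(\gamma^\eps|_{\text{upper}}) \leq l + \eps$ and $(x_2,\ldots,x_n)$-norm bounded by $a + \eps$, apply the above construction to each $\gamma^\eps$ (which now produces a smooth filling of area at most $(l+\eps)(a+\eps)$), and take $\eps \to 0$. The approximating loops can be connected to $\gamma$ by a thin annular strip of arbitrarily small area, so the bound on $A(\gamma)$ follows in the limit.
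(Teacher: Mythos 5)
Your proof is correct and takes a genuinely different, and arguably cleaner, route than the paper's. The paper subdivides the upper arc of $\gamma$ into small pieces, drops orthogonal segments $\sigma_i$ to the $x_1$-axis, forms small loops $\gamma_i$, shows that the sum $\sum_i A(\gamma_i)$ only decreases as subdivision points are removed (so it dominates $A(\gamma)$), bounds each $A(\gamma_i) \leq a\, \ell_i + C\ell_i^2$ via the isoperimetric inequality, and then sends the mesh to zero. Your construction avoids all of this by writing down an explicit Lipschitz filling $f(s,t) = (\gamma_1(s), (1-t)\gamma_\perp(s))$ of the loop consisting of the upper arc and its orthogonal shadow on the axis; the pointwise area bound $|\partial_s f|\,|\partial_t f| \leq a$ immediately gives total area $\leq la$, and the leftover loop lies on the $1$-dimensional axis and so contributes zero area. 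This is more elementary: it does not invoke the isoperimetric inequality and gives the constant $la$ exactly, without an error term that must be argued to vanish.

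One small technical point to tighten in the final step: when you mollify, it is simplest to keep $\gamma^\eps$ equal to $\gamma$ on the lower semicircle (which is already smooth, being an interval on the axis) and only smooth the upper arc; otherwise the return path $s \mapsto (\gamma_1^\eps(s),0)$ and the perturbed lower semicircle no longer both lie in the axis, and the zero-area gluing patch is not literally available. Alternatively, one can skip approximating $\gamma$ altogether and instead mollify the Lipschitz map $f$ in the interior of $D^2$ while fixing its boundary values; either route yields the required $C^1$-on-the-interior, $W^{1,2}$ competitor with area arbitrarily close to $la$.
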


\begin{proof}
Let $\ov\gamma : [0,l] \to \IR^n$ be a parameterization by arclength of $\gamma$ restricted to the upper semicircle of $S^1$.
Let $0 = s_0 < s_2 < \ldots < s_m = l$ be a subdivision of the interval $[0,l]$.
Let $y_i$ be the $x_1$-coordinate of $\ov\gamma(s_i)$ and $\sigma_i$ a straight segment between $\ov\gamma(s_i)$ and $(y_i, 0, \ldots, 0)$ for each $i = 0, \ldots, m$.
For each $i = 1, \ldots, m$ let $\ov\gamma_i$ be the loop that consists of $\ov\gamma |_{[s_{i-1}, s_i]}, \sigma_{i-1}, \sigma_i$ and the interval between $(y_{i-1}, 0, \ldots, 0), (y_i, 0, \ldots, 0)$.
We set $A^* (s_0, \ldots, s_m) = A(\gamma_1) +  \ldots + A( \gamma_m)$.

Let $i \in \{ 1, \ldots, m-1 \}$.
We claim that if we remove $s_i$ from the list of subdivisions, then the value of $A^* (s_0, \ldots, s_m)$ does not increase.
In fact, if $y_{i-1} \leq y_i \leq y_{i+1}$ or $y_{i-1} \geq y_i \geq y_{i+1}$, then this is claim is true since any two maps $h_i, h_{i+1} : D^2 \to M$ that restrict to $\gamma_i, \gamma_{i+1}$ on $S^1$ can be glued together along $\sigma_i$.
On the other hand, if $y_{i-1} \leq y_{i+1} \leq y_i$, then $h_i, h_{i+1}$ can be glued together along the union of $\sigma_i$ with the interval between $(y_{i+1}, 0, \ldots, 0), (y_i, 0, \ldots, 0)$.
The other cases follow analogously.
Multiple application of this finding yields $A(\gamma) \leq A^* (s_0, \ldots, s_m)$.

Let now $\gamma'_i$ be the loop that consists of the straight segment between  $\ov\gamma(s_{i-1}), \linebreak[1] \ov\gamma(s_i)$, the segments $\sigma_{i-1}, \sigma_i$ and the interval between $(y_{i-1}, 0, \ldots, 0), (y_i, 0, \ldots, 0)$.
Moreover, let $\gamma''_i$ be the loop that consists of the straight segment between $\ov\gamma(s_{i-1}), \linebreak[1] \ov\gamma(s_i)$ and the curve $\ov\gamma|_{[s_{i-1}, s_i]}$.
Then by the isoperimetric inequality and some basic geometry
\[ A(\gamma_i) \leq A(\gamma'_i) + A(\gamma''_i) \leq a \ell (\ov\gamma|_{[s_{i-1}, s_i]}) + C (\ell (\ov\gamma|_{[s_{i-1}, s_i]}))^2. \]
Adding up this inequality for all $i = 1, \ldots, m$ yields
\[ A(\gamma) \leq A^* (s_0, \ldots, s_m) \leq a l + \sum_{i=1}^m C (\ell (\ov\gamma|_{[s_{i-1}, s_i]}))^2. \]
The right hand side converges to $0$ as the mesh size of the subdivisions approaches zero.
\end{proof}

The following Lemma is our main regularity result.

\begin{Lemma} \label{Lem:regularityon1skeleton}
The map $f : V^{(1)} \to M$ has regularity $C^{1,1}$ on every edge $E \subset V^{(1)}$.
\end{Lemma}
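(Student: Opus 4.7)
The plan is to derive, from the variational characterization of $f$ in Lemma \ref{Lem:existenceon1skeleton}, a distributional bound on the geodesic curvature of $\gamma := f|_E$; by the equivalence of bounded distributional geodesic curvature with $C^{1,1}$ regularity for a Lipschitz curve in a Riemannian manifold, this will yield the lemma.

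By Lemma \ref{Lem:existenceon1skeleton} we may parameterize $\gamma : [0, L] \to M$ at unit speed. Fix an interior subinterval $I \subset (0, L)$ and a smooth vector field $W$ along $\gamma|_I$ with compact support in $I$. For small $|t|$ set $\gamma_t(s) := \exp_{\gamma(s)}(tW(s))$ on $I$ and $\gamma_t := \gamma$ elsewhere, and define $f_t : V^{(1)} \to M$ by $f_t|_E := \gamma_t$ and $f_t := f$ on every other edge. Because $W$ vanishes near $\partial I$ and $|t|$ is small, $f_t$ is continuous and homotopic to $f_0|_{V^{(1)}}$ relative to $\partial V$, hence a valid competitor in the second part of Lemma \ref{Lem:existenceon1skeleton}.

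The technical core is a uniform bound on the perturbation of each area term. For each of the $v_E$ faces $F$ adjacent to $E$, given $\eta > 0$ pick a near-minimizer $h_0 : D^2 \to M$ with boundary $f|_{\partial F}$ and $\area(h_0) \leq A(f|_{\partial F}) + \eta$, and glue onto $h_0$ the swept strip $H(s, \tau) := \exp_{\gamma(s)}(\tau t W(s))$ along the arc of $\partial D^2$ that parameterizes the $E$-side of $\partial F$. The concatenation is a disk realizing $f_t|_{\partial F}$, and a direct coarea-type computation (in the spirit of Lemma \ref{Lem:isoperimetricdistancetoaxis}) bounds $\area(H) \leq |t|\,\|W\|_{L^1(I)} + O(t^2)$. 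Letting $\eta \to 0$ and swapping the roles of $f$ and $f_t$ yields
\begin{equation*}
\bigl|A(f_t|_{\partial F}) - A(f|_{\partial F})\bigr| \leq |t|\,\|W\|_{L^1(I)} + O(t^2).
\end{equation*}
Summing over the $v_E$ adjacent faces and combining with the minimality inequality of Lemma \ref{Lem:existenceon1skeleton} applied to $f_t$ gives $\ell(\gamma_t) - \ell(\gamma) \geq -v_E\,|t|\,\|W\|_{L^1(I)} - O(t^2)$.

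The first variation of length for a Lipschitz curve against a smooth test field reads
\begin{equation*}
\lim_{t \to 0^+} \frac{\ell(\gamma_t) - \ell(\gamma)}{t} = \int_I \bigl\langle \dot\gamma(s), \nabla_{\dot\gamma(s)} W(s)\bigr\rangle\, ds = -\langle \kappa_\gamma, W\rangle,
\end{equation*}
with $\kappa_\gamma := \nabla_{\dot\gamma}\dot\gamma$ interpreted distributionally. Dividing the previous inequality by $t > 0$ and letting $t \to 0^+$, then repeating with $W$ replaced by $-W$, we obtain the two-sided bound $|\langle \kappa_\gamma, W\rangle| \leq v_E\,\|W\|_{L^1(I)}$ for every admissible $W$. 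By $L^1$--$L^\infty$ duality this places $\kappa_\gamma$ in $L^\infty(I)$; since in coordinates $\kappa_\gamma = \ddot\gamma + \Gamma(\gamma)(\dot\gamma, \dot\gamma)$ and the Christoffel term is already bounded, this is equivalent to $\ddot\gamma \in L^\infty(I)$, i.e.\ $\gamma \in C^{1,1}(I)$; since $I$ was an arbitrary compact interior subinterval of $E$, the lemma follows. The main obstacle I foresee is the area perturbation bound: one must verify the strip estimate uniformly without assuming embeddedness of the Plateau minimizer $h_0$ or of $f|_{\partial F}$, which may for instance traverse $f(E)$ several times through other edges of $\partial F$. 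The gluing is performed at the level of parameterized disks, so it is blind to such image-level self-intersections and the estimate depends only on $\|W\|_{L^1}$.
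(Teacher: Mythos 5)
Your proposal is correct and takes a genuinely different route from the paper's proof. The paper argues ``synthetically'': for three nearby parameters it replaces the arc $f_E|_{[s_1,s_3]}$ by its geodesic chord, uses Lemma~\ref{Lem:isoperimetricdistancetoaxis} to bound the area gain of this competitor by (valency)$\cdot l\cdot a$ (where $a$ is the chord-to-arc distance), extracts the inequalities $l-d\le C l^3$, $a\le C l^2$, turns those into a telescoping angle bound $\nangle(\gamma',\gamma'')\le C(s''-s)$, and from there establishes first the existence of the derivative and then its Lipschitz continuity. You instead run a first-variation argument: perturb by $\gamma_t(s)=\exp_{\gamma(s)}(tW(s))$, bound the perturbation of each adjacent face's infimal area by the area of the swept strip ($\le |t|\,\|W\|_{L^1}+O(t^2)$), combine with the minimality inequality of Lemma~\ref{Lem:existenceon1skeleton} to get a two-sided bound $|\langle\kappa_\gamma,W\rangle|\le v_E\|W\|_{L^1}$ on the distributional geodesic curvature, and conclude $\kappa_\gamma\in L^\infty$ by duality, hence $\gamma\in C^{1,1}$. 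Both proofs ultimately rest on the same two ingredients --- the variational characterization of $f$ and an area-perturbation estimate --- but your distributional route is shorter and yields the cleaner explicit bound $|\kappa_\gamma|\le v_E$ a.e., whereas the paper's builds regularity by hand through elementary comparison geometry.

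The concern you raise at the end is not in fact an obstacle, and your last sentence already resolves it: the gluing of the strip $H$ onto a near-minimizer $h_0$ is performed in the parameter domain (gluing $I\times[0,1]$ onto $D^2$ along the $E$-arc of $\partial D^2$), and the parameterized area functional $\int|\partial_sh\wedge\partial_\tau h|$ is additive over disjoint parameter regions, so image-level self-intersections of $h_0$ or of $f|_{\partial F}$ play no role and the estimate indeed depends only on $\|W\|_{L^1}$. Two small points worth flagging for completeness, both easily dispatched: you should dispose of the degenerate case $\ell(f|_E)=0$ at the outset (the paper does so explicitly), and you should note that because the bound $\|\kappa_\gamma\|_{L^\infty}\le v_E$ on $\Int E$ is uniform, the Lipschitz extension of $\dot\gamma$ to the closed edge gives $C^{1,1}$ up to the endpoints of $E$, which is the regularity the subsequent Lemma~\ref{Lem:speedandcurvatureagreeonintersection} actually uses. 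Edges of $\partial V$, which the construction leaves untouched, are already smooth since $f|_{\partial V}=f_0|_{\partial V}$ is a smooth embedding.
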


\begin{proof}
Let $E \subset V^1$ be an edge and equip $E$ with a smooth parameterization of an interval such that $f|_E$ is parameterized by constant speed.
We now establish the regularity of the map $f_E = f |_E : E \to M$ up to the endpoints of $E$.
Assume $\ell (f|_E) > 0$, since otherwise we are done.
After scaling the interval by which $E$ is parameterized, we may assume without loss of generality that $f_E$ is parameterized by arclength, i.e. that
\[ \ell (f_E |_{[s_1, s_2]}) = s_2 - s_1 \qquad \text{for every interval} \qquad [s_1, s_2] \subset E. \]

Let $\varepsilon > 0$ be smaller than the injectivity radius of $M$ and observe that whenever we choose exponential coordinates $(y_1, \ldots, y_n)$ around a point $p \in M$ then under these coordinates we have the following comparison with the Euclidean metric $g_{\textnormal{eucl}}$:
\begin{equation} \label{eq:gminusgeuclforregularity}
 | g - g_{\textnormal{eucl}} | < C_1 r^2
\end{equation}
for some uniform constant $C_1$ (here $r$ denotes the radial distance from $p$).
Assume moreover that $\varepsilon$ is chosen small enough such that $g$ is $2$-Bilipschitz to $g_{\textnormal{eucl}}$.

Consider three parameters $s_1, s_2, s_3 \in E$ such that $s_1 < s_2 < s_3 < s_1 + \frac1{10} \varepsilon$.
We set $x_i = f_E(s_i)$, $l = |s_3 - s_1| = \ell( f_E |_{[s_1, s_3]})$ as well as $d = \dist(x_1, x_3)$ and we denote by $\gamma$ a minimizing geodesic segment between $x_1$ and $x_3$.
Consider now the competitor map $f'$ that agrees with $f$ on $(V^{(1)} \setminus E) \cup (E \setminus (s_1, s_2))$ and that maps the interval $[s_1, s_3]$ to the segment $\gamma$.

Let us first bound the area gain for such a competitor.
Denote by $\gamma^* : S^1 \to M$ the loop that consists of the curves $f_E |_{[s_1, s_3]}$ and $\gamma$.
Choose geodesic coordinates $(y_1, \ldots, y_n)$ around $x_1$ such that $\gamma$ can be parameterized by $(t, 0, \ldots, 0)$ and denote by $a$ the maximum of the euclidean norm of the $(y_2, \ldots, y_n)$-component of $f_E$ on $[s_1, s_3]$.
By Lemma \ref{Lem:isoperimetricdistancetoaxis} we have
\[ A(\gamma^*)  \leq 8 l a. \]
(Recall that $g$ is $2$-Bilipschitz to the euclidean metric.)
Let $F_1, \ldots, F_v$ be the faces that are adjacent to $E$.
Then for each $j = 1, \ldots, v$ we have
\[ A( f' |_{\partial F_j} ) \leq A( f |_{\partial F_j} ) + A(\gamma^*) \leq A( f|_{\partial F_j}) + 8la. \]
Moreover, $\ell (f') \leq \ell(f) - l + d$.
So by the inequality of Lemma \ref{Lem:existenceon1skeleton} we obtain
\begin{equation} \label{eq:lminusd}
 l - d \leq 8v \cdot l a .
\end{equation}

Let now $l'$ be the length of the segment parameterized by $f_E |_{[s_1, s_3]}$ with respect to the euclidean metric $g_{\textnormal{eucl}}$ in the coordinate system $(y_1, \ldots, y_n)$.
Then $\tfrac12 l' \leq l \leq 2l'$.
Moreover, we obtain the following improved bound on $l'$ using (\ref{eq:gminusgeuclforregularity}):
\begin{multline*}
 l = \int_{s_1}^{s_3} \sqrt{g(f'_E (s), f'_E(s))} ds \geq \int_{s_1}^{s_3} \sqrt{ (1 - C_1 (l')^2) g_{\textnormal{eucl}} (f'_E(s), f'_E(s))} ds \\
  \geq \sqrt{ 1- 4C_1 l^2} \; l'.
\end{multline*}
By basic trigonometric estimates with respect to the euclidean metric in the coordinate system $(y_1, \ldots, y_n)$ we obtain
\[ d^2 + 4 a^2 \leq (l')^2  . \]
So
\begin{equation} \label{eq:dandarelationprecise}
  (1-4C_1 l^2) (d^2 + 4 a^2) \leq l^2.
\end{equation}
Plugging in (\ref{eq:lminusd}) yields with $c = \frac14 v^{-2}$
\[ (1-4C_1 l^2) ( l^2 d^2 + c (l-d)^2) \leq  l^4. \]
And hence for $l < \frac14 C_1^{-1/2}$
\[ \tfrac{c}2 (l-d)^2 \leq l^2 (l-d)(l+d) + 4 C_1 l^4 d^2 \leq 2 l^3 (l-d) + 4 C_1 l^6. \]
This inequality implies that if $l-d \geq l^3$, then $\tfrac{c}2 (l-d) \leq 2 l^3 + 4 C_1 l^3$.
So in either case (if $l-d \geq l^3$ or if $l-d < l^3$) there is a universal constant $C_2$ such that
\begin{equation} \label{eq:lminusdcubebound}
 l - d \leq C_2 l^3.
\end{equation}
In particular, if $l$ is smaller than some uniform constant, then
\[ \tfrac12 d \leq l \leq 2d. \]
We will in the following always assume that this bound holds whenever we compare the intrinsic and extrinsic distance between two close points on $f_E$.

Next, we plug (\ref{eq:lminusdcubebound}) back into (\ref{eq:dandarelationprecise}) and obtain a bound on $a$ for small $l$:
\[ a \leq \sqrt{\frac{(l-d) (l+d) + 4 C_1 l^2 d^2}{4 ( 1- 4 C_1 l^2)}} \leq \sqrt{ C_2 l^3 \cdot 2 l + 4 C_1 l^2 d^2} \leq C_3 l^2 \]
for some uniform constant $C_3$.
Now consider the point $x_2$ on $f_E ([s_1, s_3])$, set $l_1 = \ell (f_E |_{[s_1,s_2]})$ and let $\alpha \geq 0$ be the angle between the geodesic segment $\gamma$ from $x_1$ to $x_3$ and the geodesic segment $\gamma_1$ from $x_1$ to $x_2$.
Observe that the angle $\alpha$ between $\gamma$ and $\gamma_1$ is the same with respect to both $g$ and $g_{\textnormal{eucl}}$.
Moreover, by our previous conclusion applied to $x_1, x_2$ instead of $x_1, x_3$, the length of $\gamma_1$ is bounded from below by $\frac12 l_1$.
So by basic trigonometry we find that there are uniform constants $\varepsilon_0 > 0$ and $C_4 < \infty$ such that we have
\begin{equation} \label{eq:angleboundforregularity}
 \alpha \leq C_4 l \qquad \text{if} \qquad l_1 \geq \tfrac12 l \quad \text{and} \quad l < \varepsilon_0 .
\end{equation}

We can now establish the differentiability of $f_E$.
Let $s, s', s'' \in E$ such that $s < s' < s'' < s + \varepsilon_0$, set $x = f_E(s)$, $x' = f_E (s')$, $x'' = f_E (s'')$ and choose minimizing geodesic segments $\gamma', \gamma''$ between $x, x'$ and $x, x''$.
Let $\alpha \geq 0$ bet the angle between $\gamma', \gamma''$ at $x$.
For each $i \geq 1$ for which $s + 2^{-i} \in E$ we set $x_i = f_E ( s+ 2^{-i})$ and we choose a minimizing geodesic segment $\gamma_i$ between $x$ and $x_i$.
Choose moreover indices $i' \geq i'' \geq 1$ such that $2^{- i'} \leq s' - s< 2^{- i' + 1}$ and $2^{- i''} \leq s'' - s< 2^{- i'' + 1}$.
Then by (\ref{eq:angleboundforregularity})
\begin{multline*}
\alpha \leq \nangle_x (\gamma'', \gamma_{i''}) + \nangle_x (\gamma_{i''}, \gamma_{i'' + 1}) + \ldots + \nangle_x (\gamma_{i' - 2}, \gamma_{i' - 1}) + \nangle_x (\gamma_{i' - 1}, \gamma') \\
\leq C_4 (s''-s) + C_4 2^{-i''} + C_4 2^{-i'' - 1} + \ldots \\
 \leq C_4 (s'' - s) + 2 C_4 2^{-i''} \leq 3 C_4 (s'' - s).
\end{multline*}
Note also that by (\ref{eq:lminusdcubebound}) the quotients $\frac{\ell(\gamma')}{s'-s}$ and $\frac{\ell(\gamma'')}{s''-s}$ converge to $1$ as $s'' \to s$.
Altogether, this shows that the right-derivative of $f_E$ exists, has unit length and that
\begin{equation} \label{eq:anglebetweenfsandgamma}
 \nangle_x \big( \tfrac{d}{ds^+} f_E (s), \gamma'' \big) \leq 3 C_4  (s'' - s).
\end{equation}
The existence of the left-derivative together with the analogous inequality follows in the same way.
In order to show that the right and left-derivatives agree in the interior of $E$, it suffices to show for any $s \in \Int E$, that the angle between the geodesic segments between $f_E (s), f_E(s-s')$ and $f_E (s), f_E (s+s')$ goes to $\pi$ as $s' \to 0$.
This follows immediately from (\ref{eq:angleboundforregularity}) and the fact that the sum of the angles of small triangles in $M$ goes to $\pi$ as the circumference goes to $0$.

Finally, we establish the Lipschitz continuity of the derivative $f'_E(s)$.
Let $s_1, s_3 \in E$ such that $s_1 < s_3 < s_1 + \varepsilon_0$ and let $s_2 = \frac12 (s_1 + s_3)$ be the midpoint on $f_E$.
Let $\gamma$ and $\gamma_1$ be defined as before and let $\gamma_3$ be the geodesic segment between $x_2 = f_E(s_2)$ and $x_3 = f_E (s_3)$.
Using (\ref{eq:gminusgeuclforregularity}) we find that if we choose geodesic coordinates around $x_1$ or $x_3$, then we can compare angles at different points on $f_E ([s_1, s_3])$ up to an error of $O(|s_3 - s_1|^2)$.
So we can estimate using (\ref{eq:angleboundforregularity}) and (\ref{eq:anglebetweenfsandgamma})
\begin{multline*}
 \nangle ( f'_E(s_1), f'_E(s_3)) \leq \nangle (f'_E (s_1), \gamma_1) + \nangle (\gamma_1, \gamma) \\ + \nangle (\gamma, \gamma_3) + \nangle (\gamma_3, f'_E(s_3)) + O(|s_3 - s_1|^2) \\ \leq 3 C_4 |s_2 - s_1| + 2 C_4 |s_3 - s_1| + 3 C_4 |s_3 - s_2| + O(|s_3 - s_1|^2)  \leq C_5 |s_3 - s_1|
\end{multline*}
for some uniform constant $C_5$.
This finishes the proof.
\end{proof}

Now if for every face $F \subset V$ the map $f |_{\partial F}$ is injective (i.e. an embedding in a proper parameterization), then by solving the Plateau problem for each face (cf \cite{Mor}) we obtain an extension $\td{f} : V \to M$ of $f$ that is homotopic to $f_0$ and for which $\area \td{f} + \ell ( \td{f} |_{V^{(1)}} ) = A^{(1)} (f_0)$.
So in this case the existence of the minimizer is ensured.
In general, however, we need take into account the possibility that $f |_{\partial F}$ has self-intersections.
Note that there might be infinitely many such self-intersections and the set of self-intersections might even have positive $1$ dimensional Hausdorff measure.
This adds some technicalities to our discussion.

\subsection{Results on self-intersections and the Plateau problem}
The following Lemma states that two intersecting curves agree up to order $2$ almost everywhere on their set of intersection.

\begin{Lemma} \label{Lem:speedandcurvatureagreeonintersection}
Let $\gamma : [0, l] \to M$ be a curve of regularity $C^{1,1}$ that is parameterized by arclength.
Then the geodesic curvature along $\gamma$ is defined almost everywhere, i.e. there is a vector field $\kappa : [0,l] \to TM$ along $\gamma$ (i.e. $\kappa (s) \in T_{\gamma(s)} M$ for all $s \in [0,l]$) and a null set $N \subset [0,l]$ such that at each $s \in [0,l] \setminus N$ the curve $\gamma$ is twice differentiable and the geodesic curvature at $s$ equals $\kappa (s)$.

Consider now two such curves $\gamma_1 : [0,l_1] \to M$, $\gamma_2 : [0,l_2] \to M$ with geodesic curvature vector fields $\kappa_1, \kappa_2$.
Assume additionally that $\gamma_1, \gamma_2$ are injective embeddings that are contained in a coordinate chart $(U, (x_1, \ldots, x_n))$ in such a way that there is a vector $v \in \IR^n$ with the property that $\langle \gamma'_i (s), v \rangle \neq 0$ with respect to the euclidean metric for all $s \in [0, l_i]$ and $i = 1,2$.

Let $X_1 = \{ s \in [0,l_1] \;\; : \;\; \gamma_1(s) \in \gamma_2([0,l_2]) \}$ and $X_2 = \{ s \in [0,l_2] \;\; : \;\; \gamma_2(s) \in \gamma_1([0,l_1]) \}$ be the parameter sets of self-intersections.
Then there is a continuously differentiable map $\varphi : [0,l_1] \to \IR$ whose derivative  vanishes nowhere such that $\varphi(X_1) = X_2$ and such that $\gamma_1(s) = \gamma_2 (\varphi(s))$ whenever $s \in X_1$.
Moreover, there are null sets $N_i \subset X_i$ such that $\varphi(N_1) = N_2$ and such that for all $s \in X_1 \setminus N_1$ we have $\varphi'(s) = \pm 1$, $\gamma'_1 (s) = \gamma'_2 (\varphi(s)) \varphi'(s)$ and $\kappa_1 (s) = \kappa_2 (\varphi(s))$.
\end{Lemma}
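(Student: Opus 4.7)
The plan is to handle the three assertions in order: existence of the geodesic curvature almost everywhere, construction of $\varphi$, and the pointwise identities on $X_1 \setminus N_1$. For the curvature, a $C^{1,1}$ curve $\gamma$ has Lipschitz derivative in any coordinate chart, so Rademacher's theorem yields $\gamma''(s)$ on a full-measure set, and there the formula $\kappa(s) = \gamma''(s) + \Gamma_{\gamma(s)}(\gamma'(s), \gamma'(s))$ (with $\Gamma$ the Christoffel symbol in coordinates) gives the desired geodesic curvature vector.

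To build $\varphi$, I would exploit the transversality hypothesis: the scalar function $\psi_i(s) := \langle \gamma_i(s), v\rangle$ is a strictly monotone $C^{1,1}$ function on $[0,l_i]$ and hence a bi-Lipschitz homeomorphism onto its image. Define $\varphi := \psi_2^{-1} \circ \psi_1$ wherever $\psi_1(s) \in \psi_2([0,l_2])$, and extend to all of $[0,l_1]$ by any $C^{1,1}$ strictly monotone prolongation. Injectivity of $\gamma_2$ forces $\varphi(s) = t$ whenever $\gamma_1(s) = \gamma_2(t)$, so $\varphi(X_1) = X_2$; the inverse function theorem (applicable since $\psi_2' \neq 0$) gives $\varphi \in C^{1,1}$ with nowhere-vanishing derivative.

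The core of the argument is the almost-everywhere tangent and curvature identities. Let $N_1 \subset X_1$ consist of non-Lebesgue-density points of $X_1$, of points where $\gamma_1$ fails to be twice differentiable in the pointwise Taylor sense, of points $s$ with $\varphi(s)$ not a density point of $X_2$, and of points $s$ where $\gamma_2$ is not twice differentiable at $\varphi(s)$. All four subsets are null by Rademacher and by the bi-Lipschitz character of $\varphi$, which also yields $\varphi(N_1) = N_2$ for the analogous null set on the $X_2$ side. For $s \in X_1 \setminus N_1$ I pick a sequence $s_n \in X_1$ with $s_n \to s$, set $h_n = s_n - s$ and $k_n = \varphi(s_n) - \varphi(s)$, and insert $\gamma_1(s+h_n) = \gamma_2(\varphi(s)+k_n)$ into the second-order Taylor expansions of $\gamma_1$ at $s$ and $\gamma_2$ at $\varphi(s)$. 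First-order matching yields $\gamma_1'(s) = \varphi'(s)\gamma_2'(\varphi(s))$, which together with the unit-speed condition forces $\varphi'(s) = \pm 1$; second-order matching then yields $\gamma_1''(s) - \gamma_2''(\varphi(s)) = \varphi''(s)\gamma_2'(\varphi(s))$.

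Since $\varphi'(s) = \pm 1$, the Christoffel contributions to the two geodesic curvatures coincide at $\gamma_1(s) = \gamma_2(\varphi(s))$, so $\kappa_1(s) - \kappa_2(\varphi(s))$ equals $\varphi''(s)\gamma_2'(\varphi(s))$ and is in particular parallel to this tangent vector; but both $\kappa_i$ are orthogonal to it, so the coefficient $\varphi''(s)$ must vanish and $\kappa_1(s) = \kappa_2(\varphi(s))$. I expect the main obstacle to be the irregularity of $X_1$, which can have positive measure with empty interior, so that the pointwise twice differentiability of $\gamma_i$ coming from the $C^{1,1}$ assumption must be combined carefully with Lebesgue density arguments restricted to $X_1$. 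The transversality hypothesis plays its decisive role exactly here, since it is what makes $\varphi$ regular enough to transport density points and null sets between $X_1$ and $X_2$.
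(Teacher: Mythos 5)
Your proof follows essentially the same route as the paper's: Rademacher's theorem for the almost-everywhere geodesic curvature, construction of $\varphi$ via the transversal projection $\psi_i(s) = \langle\gamma_i(s), v\rangle$, and second-order Taylor matching at a good point $s$ along a sequence $s_n \in X_1$ tending to $s$. Your choice of excluding non-density points of $X_1$ is also fine (the paper excludes isolated points, but both exceptional sets are null, and density implies non-isolation which is all that is used), and your explicit mention that $\varphi := \psi_2^{-1}\circ\psi_1$ must be prolonged to all of $[0,l_1]$ is a point the paper leaves tacit.

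There is one small but genuine gap in the final step. You write $\kappa_1(s) - \kappa_2(\varphi(s)) = \varphi''(s)\,\gamma_2'(\varphi(s))$, which presupposes a second-order Taylor expansion $k_n = \varphi'(s)h_n + \tfrac12\varphi''(s)h_n^2 + o(h_n^2)$ of $\varphi$ at $s$; but the null set $N_1$ you define does not exclude the points where $\varphi'$ fails to be differentiable, so at a generic $s \in X_1 \setminus N_1$ the quantity $\varphi''(s)$ may not exist. Since, as you note, $\varphi$ is $C^{1,1}$, this is easy to repair by adjoining to $N_1$ (and correspondingly to $N_2$ via the bi-Lipschitz map $\varphi$) the null set where $\varphi$ is not twice differentiable. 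The paper avoids the issue altogether: rather than subtracting the full vector-valued expansions, it pairs them against a test vector $v^*$ orthogonal to $\gamma_1'(s)$, hence also to $\gamma_2'(\varphi(s))$ since $\varphi'(s)=\pm1$. The pairing annihilates the tangential term $(s''_k - \varphi(s))\,\gamma_2'(\varphi(s))$ for every $k$, so only the first-order information $s''_k - \varphi(s) = \varphi'(s)(s'_k - s) + o(s'_k - s)$ is needed, and no second derivative of $\varphi$ ever enters. Your version, once repaired, additionally yields the (unused) fact that $\varphi''(s)=0$ for a.e.\ $s \in X_1$.
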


\begin{proof}
The first statement follows from the fact that a Lipschitz function is differentiable almost everywhere.
Observe that the geodesic curvature can be computed in terms of the first and second derivative of the curve in a local coordinate system.

Let $\varphi : [0,l_1] \to \IR$ be the composition of the projection $s \mapsto \langle \gamma_1(s), v \rangle_{\IR^n}$ with the inverse of the projection $s \mapsto \langle \gamma_2(s), v \rangle_{\IR^n}$  (the scalar product $\langle \cdot, \cdot \rangle_{\IR^n}$ is taken in the coordinates $(x_1, \ldots, x_n)$).
Then by definition $\varphi(X_1) = X_2$ and $\gamma_1(s) = \gamma_2 (\varphi(s))$ whenever $s \in X_1$.
Moreover, $\varphi'(s) \neq 0$ for all $s \in [0, l_1]$.

Next, let $N'_i \subset [0,l_i]$ be the null sets from the first part outside of which $\kappa_i$ is equal to the geodesic curvature of $\gamma_i$.
Let moreover, $N^*_1 \subset X_1$ be the set of isolated points of $X_i$.
Note that $N^*_1$ is a null set.
We now claim that the Lemma holds for $N_1 = X_1 \cap (N'_1 \cup \varphi^{-1} (N'_2) \cup N^*_1 )$ and $N_2 = X_2 \cap ( \varphi(N_1) \cup N'_2 )$.
The sets $N_1, N_2$ are null sets.
Let now $s \in X_1 \setminus N_1$.
Observe that for $s'$ close to $s$, we have
\[ \gamma_1 (s') = \gamma_1 (s) + (s'-s) \gamma'_1 (s) + \tfrac12 (s'-s)^2 \kappa_1(s) + o((s'-s)^2). \]
Similarly, for every $s''$ close to $\varphi(s)$
\[ \gamma_2 (s'') = \gamma_1 (s) + (s''- \varphi(s)) \gamma'_2 (\varphi(s)) + \tfrac12 (s''-\varphi(s))^2 \kappa_2(\varphi(s)) + o((s''- \varphi(s))^2). \]
Since $s \notin N^*_1$, there is a sequence of parameters $s'_k \to s$, $s'_k \neq s$, $s_k \in X_1$ such that with $s''_k = \varphi(s''_k)$ we have $\gamma_1(s'_k) = \gamma_2(s''_k)$.
By the fact that $\varphi$ is continuously differentiable,
\[ s''_k - \varphi(s) = \varphi'(s) (s'_k - s) + o(s'_k - s). \]
So we obtain from the expansions for $\gamma_1, \gamma_2$ that
\begin{multline*}
 (s'_k - s) \gamma'_1(s) + o (s'_k - s) = \gamma_1 (s'_k) - \gamma_1 (s) \\
 = \gamma_2 (s''_k) - \gamma_2 (\varphi(s)) = \varphi'(s) (s'_k - s) \gamma'_2 (\varphi(s)) + o (s'_k - s).
\end{multline*}
This implies that $\gamma'_1(s) = \gamma'_2(\varphi(s)) \varphi'(s)$ and $\varphi'(s) = \pm 1$ follows from the fact that $|\gamma'_1(s)| = |\gamma'_2(s)| = 1$.

Next, consider the metric $\langle \cdot, \cdot \rangle_{\gamma_1(s)}$ at the point $\gamma_1 (s)$.
Use this metric to pair the expansions for $\gamma_1, \gamma_2$ with an arbitrary vector $v^* \in \IR^n$ that is orthogonal to $\gamma'_1(s)$ and hence also to $\gamma'_2(\varphi(s))$ (with respect to $\langle \cdot, \cdot \rangle_{\gamma_1(s)}$).
Then
\begin{multline*}
\tfrac12 (s'_k - s)^2 \big\langle \kappa_1(s), v^* \big\rangle_{\gamma_1(s)} + o ((s'_k -s)^2) = \big\langle \gamma_1(s'_k) - \gamma_1(s), v^* \big\rangle_{\gamma_1(s)} \\
= \big\langle \gamma_2 (s''_k) - \gamma_1 (s), v^* \big\rangle_{\gamma_1(s)} 
= \tfrac12 (s'_k - s)^2 \big\langle \kappa_2(\varphi(s)), v^* \big\rangle_{\gamma_1(s)} + o((s'_k - s)^2).
\end{multline*}
So $\langle \kappa_1(s), v^* \rangle_{\gamma_1(s)} = \langle \kappa_2(\varphi(s)), v^* \rangle_{\gamma_1(s)}$.
Since $\kappa_1(s), \kappa_2(\varphi(s))$ are orthogonal to $\gamma'_1(s)$ with respect to $\langle \cdot, \cdot \rangle_{\gamma_1(s)}$, we conclude that $\kappa_1(s) = \kappa_2(\varphi(s))$.
\end{proof}

In the remainder of this subsection, we state the solution of the Plateau problem for loops with (possibly infinitely many) self-intersections.
We will hereby always make use of the following terminology.

\begin{Definition}
Let $\gamma : S^1 \to M$ be a continuous and contractible loop.
A continuous map $f : D^2 \to M$ is called a \emph{solution to the Plateau problem for $\gamma$} if $f$ is smooth, harmonic and almost conformal on the interior of $D^2$ and if $\area f = A(\gamma)$ and if there is an orientation preserving homeomorphism $\varphi : S^1 \to S^1$ such that $f |_{S^1} = \gamma \circ \varphi$.
\end{Definition}

We will also need a variation of the Douglas-type condition.

\begin{Definition}[Douglas-type condition] \label{Def:DouglasCondition}
Let $\gamma : S^1 \to M$ be a piecewise $C^1$ immersion that is contractible in $M$.
We say that $\gamma$ \emph{satisfies the Douglas-type condition} if for any distinct pair of parameters $s, t \in S^1$, $s \neq t$ with $\gamma(s) = \gamma(t)$ the following is true:
Consider the loops $\gamma_1, \gamma_2$ that arise from restricting $\gamma$ to the arcs of $S^1$ between $s$ and $t$.
Then
\[ A(\gamma) < A(\gamma_1) + A(\gamma_2). \]
\end{Definition}

We can now state a slightly more general solution of the Plateau problem.

\begin{Proposition} \label{Prop:PlateauProblem}
Consider a loop $\gamma : S^1 \to M$ that is a piecewise $C^1$-immersion and that is contractible in $M$.
Assume first that $\gamma$ satisfies the Douglas-type condition.
Then the following holds.
\begin{enumerate}[label=(\alph*)]
\item There is a solution $f : D^2 \to M$ to the Plateau problem for $\gamma$.
\item If $\gamma$ has regularity $C^{1,1}$ on $U \cap S^1$ for some open subset $U \subset D^2$ then for every $\alpha < 1$ the map $f$ (from assertion (a)) locally has regularity $C^{1, \alpha}$ on $U$.
Moreover, the restriction $f |_{S^1}$ has non-vanishing derivative on $U \cap S^1$ away from finitely many branch points.

Similarly, if $\gamma$ has regularity $C^{m, \alpha}$ for some $m \geq 2$ and $\alpha \in (0,1)$ on $U \cap S^1$, then $f$ locally has regularity $C^{m, \alpha}$ on $U$.
\item Assume that $\gamma_k : S^1 \to M$ is a sequence of continuous maps that uniformly converge to $\gamma$.
Moreover, assume that each $\gamma_k$ is $C$-Lipschitz for some uniform $C < \infty$.
Consider solutions to the Plateau problem $f_k : D^2 \to M$ for each such $\gamma_k$.
Then there are conformal maps $\psi_k : D^2 \to D^2$ such that the maps $f_k \circ \psi_k : D^2 \to M$ subconverge uniformly on $D^2$ and smoothly on $\Int D^2$ to a map $f : D^2 \to M$ that solves the Plateau problem for $\gamma$.

Furthermore, if $\gamma$ has regularity $C^{1,1}$ on $U \cap S^1$ for some open subset $U \subset D^2$ and $\gamma_k$ locally converges to $\gamma$ on $U \cap S^1$ in the $C^{1,\alpha}$ sense for some $\alpha \in (0,1)$, then the sequence $f_k$ actually converges to $f$ on $U$ in the $C^{1, \alpha'}$ sense for every $\alpha' < \alpha$.
\end{enumerate}
Next assume that $\gamma$ does not necessarily satisfy the Douglas-type condition and let $p$ be the number of places where $\gamma$ is not differentiable (i.e. where the right and left-derivatives don't agree).
Then there are finitely or countably infinitely many loops $\gamma_1, \gamma_2, \ldots : S^1 \to M$ that are piecewise $C^1$-immersions and contractible in $M$ such that:
\begin{enumerate}[label=(\alph*), start=4]
\item The loops $\gamma_i$ satisfy the Douglas-type condition.
\item Each $\gamma_i$ is composed of finitely many subsegments of $\gamma$ in such a way that each such subsegment of $\gamma$ is used at most once for the entire sequence $\gamma_1, \gamma_2, \ldots$.
\item For each $i$ let $p_i$ be the number of places where $\gamma_i$ is not differentiable.
Then $p_i = 2$ for all but finitely many $i$ and
\[ \sum_i (p_i - 2) \leq  p - 2. \]
\item We have
\[ A(\gamma) = \sum_i A(\gamma_i). \]
\item For any set of solutions $f_1, f_2, \ldots : D^2 \to M$ to the Plateau problems for $\gamma_1, \gamma_2, \ldots$ and every $\delta > 0$ there is a map $f_\delta : D^2 \to M$ and an open subset $D_\delta \subset D^2$ such that the following holds:
$f_\delta |_{S^1} = \gamma$ and $f_\delta$ restricted to each connected component of $D_\delta$ is a diffeomorphic reparameterization of some $f_i$ restricted to an open subset of $D^2$ in such a way that every $i$ is used for at most one component of $D_\delta$.
Moreover
\[ \area f_\delta |_{D^2 \setminus D_\delta} < \delta \qquad \text{and} \qquad \area f_\delta < A(\gamma) + \delta. \]
\end{enumerate}
\end{Proposition}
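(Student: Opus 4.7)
For (a)--(c) the strategy is to reduce to the classical solution of the Plateau problem (Douglas--Rad\'o--Morrey, with boundary regularity by Hildebrandt and Heinz--Hildebrandt). Three-point-normalize the conformal group of $D^2$ and take a minimizing sequence $f_k : D^2 \to M$ whose traces weakly monotonically parameterize $\gamma$. The Courant--Lebesgue lemma produces equicontinuity of the traces, while the Douglas-type condition of Definition~\ref{Def:DouglasCondition} is exactly what rules out the competing ``splitting'' degeneracy (in which the limit would factor as two disk maps bounding sub-loops $\gamma_1,\gamma_2$ with $A(\gamma)=A(\gamma_1)+A(\gamma_2)$). One thus obtains the almost-conformal, harmonic area minimizer of (a). For (b), Hildebrandt's boundary regularity theorem upgrades $C^{1,1}$ (resp.\ $C^{m,\alpha}$) boundary data to $C^{1,\alpha}$ (resp.\ $C^{m,\alpha}$) regularity up to the boundary on $U$, and the finiteness of boundary branch points on $U\cap S^1$ follows from Hartman--Wintner-type asymptotic expansions for the trace. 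Part (c) is the same argument applied to a sequence of Plateau solutions: choose $\psi_k$ to enforce the three-point normalization on $f_k$, pass to a subsequence via Courant--Lebesgue, and use lower semi-continuity of area together with $\gamma_k\to\gamma$ uniformly to identify the limit as a Plateau solution for $\gamma$; the local $C^{1,\alpha'}$ convergence comes from the quantitative form of Hildebrandt's boundary estimates applied uniformly in $k$.

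For (d)--(g) the plan is an iterative splitting. If $\gamma$ fails the Douglas-type condition, there are distinct $s,t\in S^1$ with $\gamma(s)=\gamma(t)$ and $A(\gamma)\geq A(\gamma^{(1)})+A(\gamma^{(2)})$ for the two sub-loops; the reverse inequality always holds by gluing disk maps, so this is an equality. Split at such a pair, and recurse on $\gamma^{(1)},\gamma^{(2)}$, always processing next the loop of currently largest area; this greedy scheme terminates in countably many steps with the residual loops having areas tending to $0$. The corner-counting assertion (f) follows from the inequality $p^{(1)}+p^{(2)}\leq p+2$ at each split (equality only if the split is at two smooth points of $\gamma$ and both new corners are genuine; strict inequality whenever a non-smooth point of $\gamma$ is consumed), which telescopes down the splitting tree. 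Property (e) is immediate from the construction, and the area additivity (g) follows from the equality $A(\gamma)=A(\gamma^{(1)})+A(\gamma^{(2)})$ at each split combined with the vanishing of areas of the residual loops.

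For (h) the plan is to truncate the family at $N$ with $\sum_{i>N}A(\gamma_i)<\tfrac12\delta$. The splitting history realizes $\gamma_1,\ldots,\gamma_N$ as obtained from $\gamma$ by pairwise gluing of finitely many pairs of points on $S^1$, and this combinatorics can be realized by a planar tree $T\subset D^2$ (using that the splits respect the cyclic order on $S^1$, a fact I would verify by induction along the splitting tree). Cutting $D^2$ along $T$ yields $N$ open sub-disks $D_\delta^{(i)}$ whose boundary arcs correspond to $\gamma_i$; on each, a conformal reparameterization of $f_i$ defines $f_\delta$ and contributes exactly $A(\gamma_i)$ to the area. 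A thin tubular neighborhood of $T$ is the exceptional set $D^2\setminus D_\delta$; it is filled by interpolating along small geodesic strips using the piecewise $C^{1,1}$ regularity of $\gamma$ on its smooth arcs (which in the eventual application will come from Lemma~\ref{Lem:regularityon1skeleton}), giving area below $\tfrac12\delta$. The main obstacle is precisely this last step: verifying planarity of the splitting combinatorics inside $D^2$ and arranging the tubular filling to be continuous across the tree-boundary interface with arbitrarily small area; once these are established, the bound $\area f_\delta<A(\gamma)+\delta$ and the assignment ``every $i$ used at most once'' are immediate bookkeeping.
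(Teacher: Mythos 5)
Your plan for (a)--(c) matches the paper's approach (Morrey for existence, with the Douglas-type condition replacing embeddedness to rule out splitting degeneracy, and Heinz--Hildebrandt/Heinz--Kinderlehrer for boundary regularity). One gap, though: the published boundary-regularity theorems you invoke do not directly cover the $C^{1,1}$ case for a general Riemannian metric. Heinz--Hildebrandt assumes $C^2$ boundary data, and Kinderlehrer's $C^{1,1}$ result is for the flat metric. The paper's proof actually spends a paragraph explaining how to adapt the Heinz--Hildebrandt argument to $C^{1,1}$ boundary data in the Riemannian setting (a careful choice of coordinate systems, a Heinz ``Hilfssatz'' estimate, and a bootstrap). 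You cannot simply cite ``Hildebrandt's boundary regularity theorem'' here.

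For (d)--(h), your overall scheme --- iteratively split wherever the Douglas condition fails, appealing to Hass --- is the same as the paper's, but your version leaves the two genuinely hard points unaddressed. First, you assert that the greedy-by-area scheme ``terminates in countably many steps with the residual loops having areas tending to $0$,'' and that (g) ``follows from the equality at each split combined with the vanishing of areas of the residual loops.'' Neither the vanishing of the residual area nor the fact that the limit components satisfy the Douglas condition is automatic: if an infinite branch of the splitting tree never produces a Douglas loop, the area carried along that branch could a priori escape to a residual. The paper avoids this by choosing the split to (roughly) maximize the smaller arc-length, by passing to the closure $X$ of all splitting segments and considering the components of $\Int D^2 \setminus X$, and by proving two nontrivial claims: that the sum of the $A(\gamma_X|_{\partial\Omega})$ over these components equals $A(\gamma)$ (the lower bound uses subadditivity plus an isoperimetric estimate on the small components), and that every component of $\Int D^2 \setminus X$ satisfies the Douglas condition (else the offending split $\sigma$ would have been chosen at some finite stage, contradiction). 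Your greedy-by-area selection might work, but without these verifications the argument has a hole exactly where the work lies. Second, you single out ``verifying planarity of the splitting combinatorics inside $D^2$'' as the main obstacle to (h). This is in fact a non-issue: the paper performs the splits as non-crossing straight segments $\sigma_1,\sigma_2,\ldots\subset D^2$ from the outset, and planarity is automatic because each new split $(s',t')$ is chosen inside a single arc of $S^1$ cut off by the earlier segments, so the new chord lies in a single already-constructed component. Treating this as the main obstacle indicates the construction has not been internalized; the actual obstacles are the two limit arguments above.
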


\begin{proof}
We first prove the first part of assertion (c).
Since $\gamma_k$ uniformly converges to $\gamma$ and the curves are uniformly Lipschitz, we can find maps $H_k : S^1 \times [0,1] \to M$ that are $C'$-Lipschitz for some uniform $C' < \infty$, smooth on $S^1 \times (0,1)$ and that satisfy $H_k (\cdot, 0) = \gamma$, $H_k (\cdot, 1) = \gamma_k$ and $\lim_{k \to \infty} \area H_k = 0$ (compare with the proof of Lemma \ref{Lem:existenceon1skeleton}).
So
\[ \lim_{k \to \infty} \area f_k = \lim_{k \to \infty} A(\gamma_k) = A (\gamma). \]
Next, recall that there are orientation preserving homeomorphisms $\varphi_k : S^1 \to S^1$ such that $f_k |_{S^1} = \gamma_k \circ \varphi_k$.
Let $s_1, s_2, s_3 \in S^1$ be three pairwise distinct points and choose orientation preserving conformal maps $\psi_k : D^2 \to D^2$ such that $\psi_k (s_i) = \varphi_k^{-1} (s_i)$ for all $i = 1, 2, 3$ and $k = 1, 2, \ldots$.
Then each map $f_k \circ \psi_k$ is still a solution to the Plateau problem for $\gamma_k$ and $(f_k \circ \psi_k) |_{S^1} = \gamma_k \circ \varphi_k \circ \psi_k$.
So we may replace $f_k$ by $f_k \circ \psi_k$ and $\varphi_k$ by $\varphi_k \circ \psi_k$ and assume in the following, without loss of generality, that $\varphi_k (s_i) = s_i$ for each $i = 1,2,3$ and $k = 1,2, \ldots$.

By compactness and since the maps $\varphi_k$ are monotone (i.e. $\varphi_k$ restricted to the arcs between $s_1, s_2, s_3$ is monotone), we may pass to a subsequence and assume that the $\varphi_k$ converge pointwise to some monotone map $\varphi : S^1 \to  S^1$ with $\varphi (s_i) = s_i$.
We claim that $\varphi$ is continuous.
Assume not.
Then there is a point $s_0 \in S^1$ such that the left and right limits $t_- = \lim_{s \nearrow s_0} \varphi(s)$, $t_+ = \lim_{s \searrow s_0} \varphi(s)$ at $s_0$ don't agree, i.e. $t_- \neq t_+$.
If $\gamma(t_-) \neq \gamma(t_+)$, then we can derive a contradiction as in \cite[Lemma 9.3.2]{Morrey-book}.
Note that due to almost conformality of $f_k$, its energy satisfies
\[ \int_{\Int D^2} | d f_k  |^2  = 2 \area f_k = 2A(\gamma_k). \]
It remains to consider the case $\gamma(t_-) = \gamma(t_+)$.
An inspection of the arguments of \cite[Lemma 9.3.2]{Morrey-book} shows that we can still derive a contradiction under the following assumption:
There are constants $d, \delta > 0$ such that for any $0 < \eps < \delta$ and sufficiently large $k$ (depending on $\eps$), any embedded smooth curve $\sigma : [0,1] \to D^2$ that connects a point in $[s_0 - \delta, s_0 - \eps]$ with a point in $[s_0 + \eps, s_0 + \delta]$ (in $S^1$) satisfies $\ell ( f_k  \circ \sigma ) \geq d$.

We will now assume that this assumption does not hold.
That is, for any $d, \delta > 0$ there is an $0 < \eps < \delta$ and a sequence $\sigma_k : [0,1] \to D^2$ of embedded smooth curves that connect a point in $[s_0 - \delta, s_0 - \eps]$ with a point in $[s_0 + \eps, s_0 + \delta]$ such that $\ell ( f_k  \circ \sigma_k ) < d$ for infinitely many $k$.
Note that since $\varphi_k \to \varphi$ pointwise and $\varphi$ is monotone, we can find for any $\eta > 0$ a $\delta > 0$ such that for any $0 < \eps < \delta$ and sufficiently large $k$ (depending on $\eps$) we have $| t_- - \varphi_k(s) | < \eta$ for all $s \in [s_0 - \delta, s_0 - \eps]$ and $| t_+ - \varphi_k(s) | < \eta$ for all $s \in [s_0 + \eps, s_0 + \delta]$.
Combining these two facts, we can pass to a subsequence and find a sequence of embedded smooth curves $\sigma_k : [0,1] \to D^2$ whose endpoints lie in $S^1$ such that $\sigma_k(0), \sigma_k(1) \to s_0$, $\varphi_k(\sigma_k(0)) \to t_-$, $\varphi_k (\sigma_k (1)) \to t_+$ and 
\[ \lim_{k \to \infty} \ell ( f_k \circ \sigma_k)  = 0. \]
We will now argue that such a scenario contradicts the Douglas-type condition for $\gamma$.
Let $\ov\gamma_1, \ov\gamma_2 : S^1 \to M$ be the loops arising from restricting $\gamma$ to the arcs $a_{1}, a_{2} \subset S^1$ between $t_-$ and $t_+$.
For every $k$ let $D_{1, k}, D_{2,k}$ be the closures of the two components of $D^2 \setminus  \sigma_k ([0,1])$ such that for each $i = 1,2$, the arc $\varphi_k (\partial D_{i,k} \cap \partial D^2)$ contains more and more points of $a_i$ as $k \to \infty$.
For each $i = 1,2$ and $k = 1, 2, \ldots$ we can combine $f_k |_{D_{i,k}}$ with $H_k$ restricted to the subset $(\partial D_{i,k} \cap \partial D^2) \times [0,1] \subset \partial D^2 \times [0,1]$, mollify around the seam and obtain a continuous map $f'_{i,k} : D^2 \to M$ whose restriction to the interior is smooth and bounded in $W^{1,2}$ such that
\begin{equation} \label{eq:fprimefD}
 \lim_{k \to \infty} \big( \area f'_{i,k} - f_k |_{D_{i,k}} \big)  = 0. 
\end{equation}
Moreover, $f'_{i,k} |_{\partial D^2}$ describes the loop that is the concatenation of the curves $\gamma |_{\varphi_k (\partial D_{i,k} \cap \partial D^2)}$, $f_k \circ \sigma_k$ and two curves corresponding to $H_k$ restricted to the two radial lines of $(\partial D_{i,k} \cap \partial D^2) \times [0,1]$, whose length goes to $0$ as $k \to \infty$.
So $f'_{i,k} |_{\partial D^2}$ can be obtained from $\ov\gamma_i$ by attaching a loop of length $l_k \to 0$ along a subsegment and deleting the overlap.
Using the isoperimetric inequality and (\ref{eq:fprimefD}), it follows that for some uniform $C'' < \infty$
\[ A(\ov\gamma_i) \leq \liminf_{k \to \infty} \big( \area f'_{i,k}  + C l_k^2 \big) = \liminf_{k \to \infty} \area f_k |_{D_{i,k}}. \]
Letting $k \to \infty$, yields
\[ A(\gamma_1) + A(\gamma_2) \leq \liminf_{k \to \infty} \big( \area f_k |_{D_{1,k}} + \area f_k |_{D_{2,k}}  \big) = \lim_{k \to \infty} \area f_k = A(\gamma), \]
which contradicts the Douglas-type condition.

Summarizing our findings, we have shown that $\varphi : S^1 \to S^1$ is continuous.
Since $\varphi$ is monotone and $\varphi (s_i) = s_i$ for $i = 1,2,3$, we deduce that $\varphi$ is also surjective and has mapping degree $1$.
Moreover, by the monotonicity of the $\varphi_k$, we obtain that the convergence $\varphi_k \to \varphi$ is actually uniform.
So $f_k |_{S^1}$ converges uniformly to $\gamma \circ \varphi$.
The subconvergence of the $f_k$ to a harmonic and conformal $f : D^2 \to M$ with $f |_{S^1} = \gamma \circ \varphi$ now follows as in the proof of \cite[Theorem 9.4.3]{Morrey-book}.
Note that in this proof, the sequence ``$z_n$'' coming from \cite[Lemma 9.4.8]{Morrey-book} can be chosen to be the sequence $f_k$ and \cite[Theorem 9.4.2]{Morrey-book} is redundant, since the $f_k$ are already energy minimizing.
The fact that $\gamma$ may have self-intersections does not create any issues, since it was only used in the proof of \cite[Lemma 9.4.8]{Morrey-book}.
In order to finish the proof of the first part of assertion (c), it only remains to show that $\varphi$ is injective, i.e. that $\varphi$ cannot be constant on a non-empty, open arc $a \subset S^1$.
Assume that such an arc $a$ existed and choose $p \in M$ such that $\{ p \} = f ( a) = \gamma (\varphi (a))$.
Let $\gamma^* : (-1,1) \to M$ be any smooth, embedded curve with $\gamma (0) = p$ and choose an open $U \subset D^2$ such that $p \in U \cap \partial D^2 \subset a$.
Using \cite{HH}, we obtain that $f$ must be constant on $U$, which is a contradiction.

Next, we prove assertion (a) using the first part of assertion (c).
By perturbing $\gamma$, we can find a sequence of smooth \emph{embeddings} $\gamma_k : S^1 \to M$ that are uniformly Lipschitz and that uniformly converge to $\gamma$.
Using \cite[Theorem 9.4.3]{Morrey-book} (see also \cite{Mor}), there is a solution $f_k : D^2 \to M$ to the Plateau problem for each $\gamma_k$.
By the first part of assertion (c), we can pass to a limit and obtain a solution to the Plateau problem for $\gamma$.

The proof of assertion (b) in the case in which $\gamma$ is $C^2$ on $U \cap S^1$ can be found in \cite{HH}.
We remark that in the case, in which $\gamma$ is only $C^{1,1}$ on $U \cap S^1$ and $g$ is locally flat on $U$, assertion (b) is a consequence of \cite{Kin}.
For our purposes, however, it is enough to note that the methods of the proof of \cite{HH} carry over to the case in which $\gamma$ is only $C^{1,1}$ on $U \cap S^1$.
We briefly point out how this can be done:
The first step in \cite{HH} consists of the choice of a local coordinate system $(x_1, \ldots, x_n)$ in which $\gamma$ is locally mapped to the $x_n$-axis.
For the subsequent estimates, this coordinate system has to be of class $C^2$.
In the case in which $\gamma$ is only $C^{1,1}$ on $U \cap S^1$, we can choose a sequence of coordinate systems $(x^k_1, \ldots, x^k_n)$ that are uniformly bounded in the $C^2$ sense, and that converge to a coordinate system $(x^\infty_1, \ldots, x^\infty_n)$ of regularity $C^{1,1}$ in every $C^{1,\alpha}$ norm and in this coordinate system $\gamma$ is locally mapped to the $x_n$-axis.
The minimal surface equation in the coordinate system $(x^k_1, \ldots, x^k_n)$ implies an equation of the form $|\triangle y^k| \leq \beta |\nabla y^k|^2$ for $y^k = (x^k_1, \ldots, x^k_{n-1}) \circ f$ where $\beta$ can be chosen independently of $k$.
Moreover, $y^k$ restricted to $U \cap S^1$ converges to $0$ in every $C^{1,\alpha}$ norm as $k \to \infty$.
Let $U''' \Subset U'' \Subset U' \Subset U$ be arbitrary compactly contained open subsets.
A closer look at the proof of the ``Hilfssatz'' in \cite{Hei} yields that for every $r > 0$ we have the estimate $|y^k| < C r$ on $U' \cap (D^2(1- r) \setminus D^2(1- 2r))$ if $k$ is large depending on $r$.
Here $C$ is independent of $k$.
It follows then that $\Vert y^k \Vert_{C^1 (U'' \cap D^2 (1-r))} < C$ for every $r > 0$ and large $k$.
This implies $\Vert y^\infty \Vert_{C^1 (U'')} < C$ and hence $\Vert y^k \Vert_{C^1(U'')} < 2C$ for large $k$. 
Standard elliptic estimates applied to the equation $|\triangle y^k| < 4\beta C^2$ then yield that $\Vert y^k \Vert_{C^{1, \alpha}(U''')} < C'$ for large $k$.
The regularity of $x^k_n \circ f$ and the fact that branch points are isolated also follow similarly as in \cite{HH}.

The second  part of assertion (c) follows in a similar manner.
We just need to choose the local coordinate systems $(x^k_1, \ldots, x^k_n)$ such that both $(x^k_1, \ldots, x^k_n) \circ \gamma$ and $(x^k_1, \ldots, x^k_n) \circ \gamma_k$ locally converge to the $x_n$-axis in the $C^{1,\alpha}$ sense. 

Now consider the case in which $\gamma$ does not satisfy the Douglas-type condition.
Then the remaining assertions follow from the methods of Hass (\cite{Hass-Plateau}).
For completeness, we briefly recall his proof.

We will inductively construct a (finite or infinite) sequence of straight segments $\sigma_1, \sigma_2, \ldots \subset D^2$ between pairs of points $s,t \in S^1$ with $\gamma(s) = \gamma(t)$, such that any two distinct segments don't intersect in their interior and such that the following holds for all $k \geq 0$:
Consider the (unique) extension $\gamma_k : S^1 \cup \sigma_1 \cup \ldots \cup \sigma_k  \to M$ of the map $\gamma$ that is constant on each $\sigma_i$.
Then we assume that the sum $A (\gamma_k |_{\partial \Omega})$ over all connected components $\Omega \subset \Int D^2 \setminus (\sigma_1 \cup \ldots \cup \sigma_k)$ is equal to $A(\gamma)$.
(Note that every such component is bounded by some of the $\sigma_i$ and some arcs of $S^1$.)

Having constructed segments $\sigma_1, \ldots, \sigma_k$, we will choose $\sigma_{k+1}$ as follows:
Consider all components $\Omega \subset \Int D^2 \setminus (\sigma_1 \cup \ldots \cup \sigma_k)$ such that $\gamma_k |_{\partial \Omega}$ does not satisfy the Douglas-type condition (or to be precise, such that the loop that is composed of the restriction of $\gamma$ to $S^1 \cap \partial \Omega$ does not satisfy the Douglas-type condition).
If there is no such $\Omega$, then we are done.
Otherwise we pick an $\Omega$ for which $\ell (\gamma |_{S^1 \cap \partial\Omega})$ is maximal.
By our assumption, we can find a straight segment $\sigma \subset D^2$ connecting two distinct parameters $s,t \in S^1 \cap \partial \Omega$ such that if we denote by $\Omega', \Omega''$ the two components of $\Omega \setminus \sigma'$, then
\begin{equation} \label{eq:OmegaOmegas}
 A (\gamma_k |_{\partial \Omega}) = A (\gamma_k |_{\partial \Omega'}) + A (\gamma_k |_{\partial \Omega''}).
\end{equation}
So if we choose $\sigma_{k+1} = \sigma$ for any such $\sigma$, then the extension $\gamma_{k+1} : S^1 \cup \sigma_1 \cup \ldots \cup \sigma_{k+1} \to M$ still satisfies the same assumption as above.
Now pick $\sigma$ amongst all such straight segments such that $\min \{ \ell (\gamma |_{S^1 \cap \partial\Omega'}), \ell (\gamma |_{S^1 \cap \partial\Omega''}) \}$ is larger than $\frac12$ times the supremum of this quantity over all such $\sigma$ and set $\sigma_{k+1} = \sigma$.

Having constructed the sequence $\sigma_1, \sigma_2, \ldots$, we let $X \subset D^2$ be the closure of $\sigma_1 \cup \sigma_2 \cup \ldots$ and we let $\gamma_X : S^1 \cup X \to M$ be the direct limit of all extensions $\gamma_k$.
Then all components $\Omega \subset \Int D^2 \setminus X$ are bounded by finitely many straight segments and arcs of $S^1$.
We now show that $A(\gamma)$ is equal to the sum of $A(\gamma_X |_{\partial \Omega})$ over all such components:
Let $\Omega_1, \ldots, \Omega_N$ be arbitrary, pairwise distinct components of $\Int D^2 \setminus X$.
Then there is a $k_0$ such that for all $k > k_0$ these components lie in different components $\Omega_{1, k}, \ldots, \Omega_{N, k}$ of $\Int D^2 \setminus (\sigma_1 \cup \ldots \cup \sigma_k)$.
Moreover $\Omega_{j, k} \to \Omega_j$ as $k \to \infty$.
So $\lim_{k \to \infty} A (\gamma_X |_{\partial \Omega_{j, k}} ) = A (\gamma_X |_{\partial\Omega_j})$ for each $j = 1, \ldots, N$.
Since the choice of the $\Omega_j$ was arbitrary, this shows that the sum of $A (\gamma_X |_{\partial \Omega})$ over all connected components $\Omega \subset \Int D^2 \setminus X$ is not larger than $A(\gamma)$.
The other direction is follows from the the subadditivity of $A$ applied to a large but finite number of components of $\Int D^2 \setminus X$ along with an isoperimetric estimate bounding the area of the remaining components.

Next, we show that for each component $\Omega \subset \Int D^2 \setminus X$, the loop $\gamma_X |_{\partial \Omega}$ satisfies the Douglas-type condition.
If not, then we could separate $\Omega$ into two non-empty components $\Omega', \Omega''$ along a straight line $\sigma$ between two parameters $s,t \in S^1$ for which $\gamma(s) = \gamma(t)$ such that (\ref{eq:OmegaOmegas}) holds for $\gamma_X$ instead of $\gamma_k$.
Choose a sequence $\Omega_k \subset \Int D^2 \setminus (\sigma_1 \cup \ldots \cup \sigma_k)$ such that $\Omega_1 \supset \Omega_2 \supset \ldots$ and such that $\Omega_k \to \Omega$ as $k \to \infty$.
Let moreover $\Omega'_k, \Omega''_k$ be the components of $\Omega_k \setminus \sigma$ such that $\Omega'_k \to \Omega'$ and $\Omega''_k \to \Omega''$.
Then $\lim_{ k \to \infty } A (\gamma_k |_{\partial \Omega_k}) = A (\gamma_X |_{\partial \Omega})$ and $\lim_{ k \to \infty } A (\gamma_k |_{\partial \Omega'_k}) = A (\gamma_X |_{\partial \Omega'})$ and $\lim_{ k \to \infty } A (\gamma_k |_{\partial \Omega''_k}) = A (\gamma_X |_{\partial \Omega''})$.
Moreover, for all $k \geq 1$
\begin{multline*}
 A (\gamma_1 |_{\partial \Omega'_1}) + A (\gamma_1 |_{\partial \Omega''_1}) \leq A (\gamma_k |_{\partial \Omega'_k}) + A(\gamma_k |_{\partial (\Omega'_1 \setminus \Omega'_k)}) + A (\gamma_k |_{\partial \Omega''_k}) + A(\gamma_k |_{\partial (\Omega''_1 \setminus \Omega''_k)}) \\
 = A (\gamma_k |_{\partial \Omega'_k}) + A (\gamma_k |_{\partial \Omega''_k}) + A(\gamma_k |_{\partial (\Omega_1 \setminus \Omega_k)}) \\
 = A (\gamma_k |_{\partial \Omega'_k}) + A (\gamma_k |_{\partial \Omega''_k}) + A(\gamma_k |_{\partial \Omega_1 }) - A(\gamma_k |_{\partial \Omega_k}).
\end{multline*}
Letting $k \to \infty$ yields
\[ A (\gamma_1 |_{\partial \Omega'_1}) + A (\gamma_1 |_{\partial \Omega''_1}) \leq A (\gamma_1 |_{\partial \Omega_1}). \]
Since the opposite inequality is trivially true, we must have equality.
This, however, yields a contradiction, because by our construction of the sequence $\sigma_1, \sigma_2, \ldots$ we must have picked $\sigma$ earlier and hence $\sigma_k = \sigma$ for some $k$.

Assertions (d), (e) and (g) are direct consequences of the construction.
By the fact that $\gamma$ is a piecewise immersion, we can deduce that all but finitely many components of $\Omega \subset \Int D^2 \setminus X$ are bounded by exactly two straight segments and two arcs.
Assertion (f) follows by counting edges and vertices.
Finally, the functions $f_\delta$ from assertion (h) can be constructed by parameterizing the solutions $f_i$ by the corresponding component of $\Int D^2 \setminus X$ and mollifying.
\end{proof}

The following variational property is a direct consequence of assertion (h) and will be used twice in this paper.

\begin{Lemma} \label{Lem:variationofA}
Consider a contractible, piecewise $C^1$-immersion $\gamma : S^1 \to M$, let $\gamma_i$ be the loops from the second part of Proposition \ref{Prop:PlateauProblem} and consider solutions $f_i : D^2 \to M$ to the Plateau problem for each $\gamma_i$.
Let $(g_t)_{t \in [0,\varepsilon)}$ be a smooth family of Riemannian metrics such that $g_0 = g$ (not necessarily a Ricci flow) and denote by $A_t (\gamma)$ the infimum over the areas of all spanning disks with respect to the metric $g_t$.
Then in the barrier sense
\[ \frac{d}{dt^+} \Big|_{t = 0} A_t ( \gamma ) \leq \sum_i \int_{D^2} \frac{d}{dt} \Big|_{t = 0} d{\vol}_{f^*_i (g_t)} \]
Here $d{\vol}_{f^*_i (g_t)}$ denotes the volume form of the pull-back metric $f^*_i (g_t)$.
\end{Lemma}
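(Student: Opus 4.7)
The plan is to exhibit an explicit $C^1$ barrier $\Phi$ above $A_t(\gamma)$ at $t=0$ whose right-derivative at $0$ equals the right-hand side of the claim; the forward barrier inequality then follows immediately. The natural candidate, built from the \emph{fixed} Plateau solutions $f_i$ (which do not depend on $t$), is
\[ \Phi(t) := \sum_i \area_{g_t} f_i. \]

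I would first verify that $\Phi$ is a well-defined $C^1$ function near $t=0$ with the expected derivative. Since $|\partial_t g_t|$ is uniformly bounded on a small right-neighborhood of $0$, a direct pointwise computation yields $|\partial_t \, d{\vol}_{f_i^*(g_t)}| \leq C\, d{\vol}_{f_i^*(g_0)}$ for some $C$ independent of $i$ and $t$. Combined with the identity $\sum_i \area_{g_0} f_i = \sum_i A(\gamma_i) = A(\gamma) < \infty$ from assertion (g) of Proposition \ref{Prop:PlateauProblem}, this supplies an integrable majorant permitting term-by-term differentiation of the series, so that $\Phi(0) = A(\gamma) = A_0(\gamma)$ and $\Phi'(0)$ equals precisely the claimed right-hand side.

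The main step is then to establish the barrier inequality $A_t(\gamma) \leq \Phi(t)$ for all sufficiently small $t \geq 0$. Given $\delta > 0$, assertion (h) of Proposition \ref{Prop:PlateauProblem} furnishes a map $f_\delta : D^2 \to M$ with $f_\delta|_{S^1} = \gamma$ together with an open subset $D_\delta \subset D^2$ whose connected components are diffeomorphic reparameterizations of the $f_i$ restricted to open subsets of $D^2$, no index $i$ used twice, and satisfying $\area_{g_0}(f_\delta|_{D^2 \setminus D_\delta}) < \delta$. I would split $\area_{g_t} f_\delta$ over $D_\delta$ and its complement: reparameterization-invariance of area together with the distinctness of indices yields $\area_{g_t}(f_\delta|_{D_\delta}) \leq \sum_i \area_{g_t} f_i = \Phi(t)$, while uniform equivalence of $g_t$ and $g_0$ for small $t$ gives $\area_{g_t}(f_\delta|_{D^2 \setminus D_\delta}) \leq C' \delta$ with $C'$ independent of $\delta$. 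Hence $A_t(\gamma) \leq \area_{g_t} f_\delta \leq \Phi(t) + C' \delta$, and sending $\delta \to 0$ produces the desired bound.

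The main obstacle will be the uniform-in-$\delta$ control of the residual area, which is exactly what the structural decomposition in assertion (h) provides. A naive attempt to use $f_\delta$ alone as a barrier would fail, since the offset $\area_{g_0} f_\delta - A_0(\gamma) \leq \delta$ would produce a $\delta/t$ error upon dividing by $t$ that no subsequent choice $\delta = \delta(t)$ can absorb without destroying the $o(t)$ expansion of $\area_{g_t} f_\delta$. Exhibiting $\Phi$---a single time-independent barrier built from the collection $\{f_i\}$---is what sidesteps this difficulty and lets the $\delta\to 0$ limit be taken cleanly.
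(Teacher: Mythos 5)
Your proof is correct, and it uses the same essential ingredients as the paper's argument---namely the maps $f_\delta$ from Proposition \ref{Prop:PlateauProblem}(h) and a $\delta \to 0$ limit---but organizes them differently and more transparently. The paper first Taylor-expands $\area_{g_t} f_\delta$ in $t$ to second order, obtaining $A_t(\gamma) \leq \area_0 f_\delta + t \int \tfrac{d}{dt}\big|_{t=0} d\vol_{f_\delta^*(g_t)} + C' t^2 \area_0 f_\delta$, and only then lets $\delta \to 0$ to arrive at a quadratic barrier of the form $A_0(\gamma) + t \cdot (\text{RHS}) + C' t^2 A_0(\gamma)$. You instead reverse the order of limits: you first establish the clean pointwise bound $A_t(\gamma) \leq \Phi(t) := \sum_i \area_{g_t} f_i$ for each fixed small $t$ by sending $\delta \to 0$, and then observe that $\Phi$ itself is a legitimate $C^1$ barrier with $\Phi(0) = A_0(\gamma)$ and $\Phi'(0)$ equal to the claimed right-hand side. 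Your version has the advantage of isolating a natural intermediate inequality ($A_t(\gamma) \leq \sum_i \area_{g_t} f_i$) that the paper leaves implicit, and the term-by-term differentiation step is handled correctly via the uniform majorant $|\partial_t \, d\vol_{f_i^*(g_t)}| \leq C\, d\vol_{f_i^*(g_0)}$ combined with the finiteness of $\sum_i \area_{g_0} f_i = A(\gamma)$. The one technical point to keep explicit is that the equivalence constant linking $g_t$ and $g_0$ (used to control the residual area over $D^2 \setminus D_\delta$) may depend on $t$, but since $\delta \to 0$ is taken at fixed $t$ this causes no trouble.
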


\begin{proof}
Due to the smoothness of the family $(g_t)$, we can find a constant $C < \infty$ such that for any two vectors $v, w \in T M$ based at the same point and every $t \in [0, \varepsilon/2)$ we have
\[ \big| g_t (v,w) - g_0 (v,w) - t \partial_t  g_0 (v,w) \big| \leq C t^2 |v|_0 |w|_0. \]
Let now $\delta > 0$ be a small constant and consider the map $f_\delta : D^2 \to M$ from Proposition \ref{Prop:PlateauProblem}(h).
It follows that there is a constant $C' < \infty$, which is independent of $\delta$, such that for small $t$
\[ \bigg| \area_t f_\delta - \area_0 f_\delta - t \int_{D^2} \frac{d}{dt} \Big|_{t=0} d {\vol}_{f_\delta^* (g_t)} \bigg| \leq C' t^2 \area_0 f_\delta. \]
So we find that
\[ A_t (  \gamma ) \leq \area_0 f_\delta + t \int_{D^2} \frac{d}{dt} \Big|_{t=0} d {\vol}_{f_\delta^* (g_t)}  + C' t^2 \area_0 f_\delta . \]
By the properties of $f_\delta$ and the fact that the integrand in the previous integral is bounded by a multiple of $d{\vol}_{f^*_\delta (g_t)}$ independently of $\delta$, it follows that for fixed $t$ and for $\delta \to 0$ the right hand side of the previous inequality goes to
\[ A_0 ( \gamma ) + t \sum_i \int_{D^2} \frac{d}{dt} \Big|_{t = 0} d{\vol}_{f^*_i (g_t)} + C' t^2 A_0 ( \gamma ). \]
This yields the desired barrier.
\end{proof}

\subsection{The structure of a minimizer along the 1-skeleton} \label{subsec:1skeletonstruc}
Consider now again the $C^{1,1}$ regular map $f : V^{(1)} \to M$ from subsection \ref{subsec:regularityon1skeleton}.
The goal of this subsection is to derive a variational identity in the spirit of (\ref{eq:simplenuiskappa}).
However, due to possible self-intersections of $f$, this undertaking becomes a quite delicate issue and it will be important to analyze the combinatorics of these self-intersections.
Note that, at least a priori, there could be infinitely many such self-intersections and the set of self-intersections can have positive measure (and possibly empty interior).
Our main result will be Lemma \ref{Lem:almosteverywherealong1skeleton} and inequality (\ref{eq:nutimeskappageq0}) therein, which will be needed subsequently.
At this point we recall that by definition $f |_{\partial V} = f_0 |_{\partial V}$ is a smooth embedding.
So no edge at the boundary has a self-intersection and any two distinct edges may only intersect in their endpoints.

We denote by $F_1, \ldots, F_n$ the faces and by $E_1, \ldots, E_m$ the edges of $V$ in such a way that $E_1, \ldots, E_{m_0}$ are the edges of $\partial V$.
For every $k = 1, \ldots, m$ let $l_k$ be the length of $f |_{E_k}$ and let $\gamma_k : [0, l_k] \to M$ be a parameterization of $f |_{E_k}$ by arclength.
Since the maps $\gamma_k$ have regularity $C^{1,1}$ (see Lemma \ref{Lem:regularityon1skeleton}), we can find for each $k = 1, \ldots, n$ a vector field $\kappa_k : [0, l_k] \to TM$ along $\gamma_k$ (i.e. $\kappa_k(s) \in T_{\gamma_k(s)}M$) that equals the geodesic curvature of $\gamma_k$ almost everywhere (see Lemma \ref{Lem:speedandcurvatureagreeonintersection}).

Next, we apply Proposition \ref{Prop:PlateauProblem} for each loop $f |_{\partial F_j}$ ($j=1, \ldots, n$) and obtain loops $\gamma_{j,1}, \gamma_{j,2}, \ldots$, which satisfy assertions (d)--(h) of this Proposition.
Without loss of generality, we may assume that each $\gamma_{j,i}$ is parameterized by arclength, i.e. that $\gamma_{j,i} : S^1(l_{j,i}) \to M$ where $l_{j,i}$ is the length of $\gamma_{j,i}$.
As before, we choose vector fields $\kappa_{j,i} : S^1(l_{j,i}) \to TM$ along each $\gamma_{j,i}$ that represent the geodesic curvature almost everywhere.
Now, let $f_{j,i} : D^2 \to M$ be an arbitrary solution to the Plateau problem for each loop $\gamma_{j,i}$.
Proposition \ref{Prop:PlateauProblem}(b) yields that $f_{j,i}$ is $C^{1,\alpha}$ up to the boundary except at the finitely many points where $\gamma_{j,i}$ is not differentiable.
So we can choose unit vector fields $\nu_{j,i} : S^1(l_{j,i}) \to TM$ along each $\gamma_{j,i}$ that are orthogonal to $\gamma_{j,i}$ and outward pointing tangential to $f_{j,i}$ everywhere except at finitely many points.

For each edge $E_k$ and each adjacent face $F_j$ we can consider the collection of subsegments of the $\gamma_{j,i}$ that lie on $E_k$.
These subsegments are pairwise disjoint and are equipped with the vector fields $\nu_{j,i}$.
We can hence construct a vector field along $\gamma_k$ that is equal to each of the $\nu_{j,i}$ on the corresponding subsegment and zero everywhere else.
Doing this for all faces $F_j$ that are adjacent to $E_k$ yields vector fields $\nu^{(1)}_k, \ldots, \nu^{(v_k)}_k : [0, l_k] \to TM$ along $\gamma_k$ where $v_k$ is the valency of $E_k$.
Note that $|\nu^{(u)}_k| \leq 1$ for all $k =1, \ldots, m$ and $u = 1, \ldots, v_k$.

With this notation at hand we can derive the following variation formula.

\begin{Lemma} \label{Lem:variationofV}
For every continuous vector field $X \in C^0(M; TM)$ that vanishes on $f (\partial V \cap V^{(0)})$ we have
\begin{multline*}
\Bigg| \sum_{k = 1}^m  \int_0^{l_k} \Big\langle \sum_{u = 1}^{v_k} \nu_k^{(u)} (s) , X_{\gamma_k(s)} \Big\rangle ds + \sum_{k=m_0+1}^m 
\bigg( - \int_0^{l_k} \big\langle \kappa_k (s), X_{\gamma_k(s)} \big\rangle ds \\
 -\big\langle \gamma'_k (0), X_{\gamma_k (0)} \big\rangle + \big\langle \gamma'_k (l_k), X_{\gamma_k (l_k)} \big\rangle \bigg) \Bigg| 
 \leq \sum_{k=1}^{m_0} \int_0^{l_k} \big| X_{\gamma_k (s)} \big| ds.
\end{multline*}
\end{Lemma}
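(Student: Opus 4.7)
The plan is to construct a variation $\tilde f_\tau$ of $f$ that flows along the interior edges only, compute the first variation of the functional $\Phi(f') := \sum_{j=1}^n A(f'|_{\partial F_j}) + \ell(f'|_{V^{(1)}})$ that $f$ minimizes (Lemma \ref{Lem:existenceon1skeleton}), and exploit the resulting inequality by running it for both $X$ and $-X$. After this procedure one obtains an equality resembling the lemma's left-hand side; the missing terms are exactly the normal integrals along the boundary edges, which are then bounded in absolute value by the right-hand side using $v_k = 1$ on $\partial V$.

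By $C^0$-approximation (all terms depend continuously on $X$ in the sup norm, since the $\nu_k^{(u)}$ and $\kappa_k$ are bounded and the endpoint terms are pointwise) we may assume $X$ is smooth. Define
\[ \tilde f_\tau(x) := \begin{cases} \exp_{f(x)}(\tau X_{f(x)}), & x \in V^{(1)} \setminus \partial V, \\ f(x), & x \in \partial V, \end{cases} \]
which is continuous at the junction vertices because $X_{f(v)} = 0$ there, and hence admissible; minimality gives $\tfrac{d}{d\tau^+}\big|_{\tau=0}\Phi(\tilde f_\tau) \geq 0$. The classical first variation of length for $C^{1,1}$ curves contributes $\sum_{k=m_0+1}^m L_k(X)$, where $L_k(X)$ denotes exactly the bracketed interior-edge expression in the lemma statement, while boundary edges contribute nothing. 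For the area piece, for each decomposition loop $\gamma_{j,i}$ from Proposition \ref{Prop:PlateauProblem} with Plateau solution $f_{j,i}$, choose a vector field $Y_{j,i}$ along $f_{j,i}$ that equals $X \circ f_{j,i}$ on the part of $\partial D^2$ mapped to an interior edge, vanishes on the part mapped to a boundary edge (continuous at the junctions since $X$ vanishes there), and is smoothly extended on $\Int D^2$. Setting $f^\tau_{j,i}(x) := \exp_{f_{j,i}(x)}(\tau Y_{j,i}(x))$, the boundary of $f^\tau_{j,i}$ matches the corresponding piece of $\tilde f_\tau|_{\partial F_j}$ except for an $O(\tau)$ mismatch at finitely many decomposition breakpoints. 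Combining Proposition \ref{Prop:PlateauProblem}(h) with an isoperimetric bridging estimate then yields $A(\tilde f_\tau|_{\partial F_j}) \leq \sum_i \area(f^\tau_{j,i}) + o(\tau)$. Since each $f_{j,i}$ is (almost conformal) harmonic, its mean curvature vanishes a.e., so the standard first variation of area reduces to the boundary integral $\int_{\partial D^2}\langle Y_{j,i}, \nu_{j,i}\rangle\,ds$; summing over $j$ and $i$ and regrouping edge by edge produces $\sum_{k=m_0+1}^m \int_0^{l_k} \langle \sum_u \nu_k^{(u)}, X\rangle\,ds$ by the very definition of the $\nu_k^{(u)}$.

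Combined with $\tfrac{d}{d\tau^+}\big|_{\tau=0}\Phi(\tilde f_\tau) \geq 0$ this yields
\[ \sum_{k=m_0+1}^m \int_0^{l_k} \Big\langle \sum_{u=1}^{v_k} \nu_k^{(u)}, X\Big\rangle\,ds + \sum_{k=m_0+1}^m L_k(X) \geq 0, \]
and repeating the argument for $-X$ (which is also admissible) reverses the inequality, so the left-hand side is in fact zero. Adding the missing terms $\sum_{k=1}^{m_0} \int_0^{l_k} \langle \nu_k^{(1)}, X\rangle\,ds$ to both sides identifies the quantity inside the absolute value bars of the lemma with precisely $\sum_{k=1}^{m_0} \int_0^{l_k} \langle \nu_k^{(1)}, X\rangle\,ds$, which is bounded in modulus by $\sum_{k=1}^{m_0} \int_0^{l_k} |X_{\gamma_k(s)}|\,ds$ because $v_k = 1$ and $|\nu_k^{(1)}| \leq 1$ on every boundary edge. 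The principal obstacle is the bridging bound $A(\tilde f_\tau|_{\partial F_j}) \leq \sum_i \area(f^\tau_{j,i}) + o(\tau)$ in the presence of possibly uncountably many self-intersections of $f|_{\partial F_j}$; it requires truncating to finitely many essential decomposition segments via Proposition \ref{Prop:PlateauProblem}(h) and controlling the area of the connecting ribbons between the deformed Plateau solutions.
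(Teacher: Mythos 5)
Your overall strategy coincides with the paper's: consider a variation of the minimizer $f$ that moves interior edges but freezes $\partial V$, compute an upper barrier for $\tfrac{d}{d\tau^+}$ of the functional $\Phi = \sum_j A(\cdot|_{\partial F_j}) + \ell(\cdot)$ using the Plateau solutions $f_{j,i}$, combine with $\tfrac{d}{d\tau^+}\Phi \geq 0$ (Lemma \ref{Lem:existenceon1skeleton}), and run the argument for both $X$ and $-X$. The length term and the rearrangement $\sum_{j,i}\int\langle\nu_{j,i},\cdot\rangle = \sum_k \int\langle\sum_u \nu_k^{(u)}, \cdot\rangle$ are handled as in the paper, and the $C^0$-approximation at the end is correct. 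Where you diverge is in the mechanism for the area piece, and this is where a genuine gap sits.

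The paper applies the \emph{diffeomorphism flow} $\Phi_t$ of $X$ to the whole loop $f|_{\partial F_j}$, which reduces the estimate to Lemma \ref{Lem:variationofA} with the pulled-back metrics $g_t = \Phi_t^*(g)$ (where the gluing/approximation work of Proposition \ref{Prop:PlateauProblem}(h) has \emph{already} been carried out once and for all), and then pays a correction: the difference between $\Phi_t \circ f|_{\partial F_j}$ and $f'_t|_{\partial F_j}$ occurs only along boundary edges, and it is bridged by an explicit ribbon $(s,t')\mapsto \Phi_{t'}(\gamma_k(s))$ of area $\leq t\int_0^{l_k}|X|\,ds + O(t^2)$. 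That ribbon is where the right-hand side of the lemma comes from. In your proof, by contrast, the right-hand side appears from the unconstrained $\sum_{k\leq m_0}\int\langle\nu_k^{(1)},X\rangle$ term added at the end, while the bridging bound $A(\tilde f_\tau|_{\partial F_j}) \leq \sum_i \area(f^\tau_{j,i}) + o(\tau)$ must be proved with no ribbon at all. The issue you flag as an ``$O(\tau)$ mismatch at finitely many decomposition breakpoints'' is precisely the difficulty: because $f|_{\partial V}$ need not be disjoint from the images of interior edges (only $f|_{\partial V}$ itself is embedded), a breakpoint $\gamma_j(s_1)=\gamma_j(s_2)=p$ can have $s_1$ on a boundary edge and $s_2$ on an interior edge with $X_p\neq 0$. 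Then $\tilde f_\tau$ moves the $s_2$-side to $\exp_p(\tau X_p)$ but leaves the $s_1$-side at $p$, so the deformed piece of $\gamma_{j,i}$ no longer closes up, and your $Y_{j,i}$ (equal to $X$ on interior pieces and $0$ on boundary pieces) is discontinuous at these breakpoints. Because the jump is $O(\tau)$ rather than $o(\tau)$, closing these gaps can contribute at order $\tau$, and so the claimed bridging bound is not established by the argument you sketch. In essence you would need to re-derive a loop-variation analogue of Lemma \ref{Lem:variationofA} while simultaneously tracking the interior/boundary jumps and the $\delta$-truncation from Proposition \ref{Prop:PlateauProblem}(h); this is exactly what the paper's uniform $\Phi_t$ + ribbon trick is designed to avoid, since $\Phi_t$ is a diffeomorphism and therefore preserves the self-intersection pattern of $f|_{\partial F_j}$. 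If you want to keep your route, the cleanest fix is to apply the flow $\Phi_t$ to the interior pieces \emph{and} to the boundary pieces (i.e. take $Y_{j,i} = X\circ f_{j,i}$ everywhere), invoke Lemma \ref{Lem:variationofA} directly, and then introduce the ribbon correction along the boundary edges exactly as the paper does --- at which point the arguments coincide.
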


\begin{proof}
Let first $X \in C^\infty (M; TM)$ be a smooth vector field that vanishes on $f ( \partial V \cap V^{(0)})$ and consider the smooth flow $\Phi : \IR \times M  \to M$, $\partial_t \Phi_t = X \circ \Phi_t$ of $X$.
Observe that $\Phi_t (x) = x$ for all $x \in f (\partial V \cap V^{(0)})$ and $t \in \IR$.
For each $t \in \IR$ let $f'_t : V^{(1)} \to M$ be the map that is equal to $\Phi_t \circ f |_{V^{(1)} \setminus \partial V}$ on $V^{(1)} \setminus \partial V$ and equal to $f |_{\partial V}$ on $\partial V$.
By Lemma \ref{Lem:existenceon1skeleton} for all $t \in \IR$
\[ A (f'_t |_{\partial F_1} ) + \ldots + A (f'_t |_{\partial F_n} ) + \ell (f'_t) \geq A^{(1)}(f_0) \]
where equality holds for $t = 0$.
So we obtain that in the barrier sense
\begin{equation} \label{eq:variationisnonnegative}
 \frac{d}{dt^+} \Big|_{t = 0} \big( A (f'_t |_{\partial F_1} ) + \ldots + A (f'_t |_{\partial F_n} ) + \ell (f'_t) \big) \geq 0.
\end{equation}

Next we compute the derivative of each term on the left hand side.
First note that for all $k = m_0 + 1, \ldots, m$
\begin{equation} \label{eq:variationoflength}
 \frac{d}{dt^+} \Big|_{t = 0} \ell(\Phi_t \circ \gamma_k ) = - \int_0^{l_k} \big\langle \kappa_k (s), X_{\gamma_k(s)} \big\rangle ds - \big\langle \gamma'_k (0), X_{\gamma_k(0)} \big\rangle + \big\langle \gamma'_k (l_k), X_{\gamma_k(l_k)} \big\rangle.
\end{equation}
Next, we estimate the derivatives of the area terms.
To do this, note that for each sufficiently differentiable map $h : D^2 \to M$ the area of $\Phi_t \circ h$ with respect to the metric $g$ is equal to the area of $h$ with respect to the metric $\Phi^*_t (g)$.
So we can use Lemma \ref{Lem:variationofA} and the first variation formula for the area to deduce that for each $j =1, \ldots, n$
\begin{equation} \label{eq:dtAffirstvariation}
  \frac{d}{dt^+} \Big|_{t = 0}  A ( \Phi_t \circ f |_{\partial F_j} ) \leq \sum_i \int_{D^2} \frac{d}{dt} \Big|_{t=0} d\vol_{f^*_{j,i} (\Phi_t^* (g))} = \sum_i \int_{D^2} \DIV_{f_{j,i}} (X \circ f_{j,i}).
\end{equation}
Here
\[ \DIV_{f_{j,i}} (X \circ f_{j,i}) = \sum_{u=1}^2 \big\langle \nabla_{df_{j,i} (e_u)} (X \circ f_{j,i} ), df_{j,i} (e_u) \big\rangle \]
for an orthonormal frame field $e_1, e_2$ on $D^2$ (note that the due to almost conformality, the volume form $d \vol_{f^*_{j,i} (g)}$ cancels with the inverse of $f_{j,i}^* (g)$).
Since $f_{j,i}$ is harmonic, we have
\[ \DIV_{f_{j,i}} (X \circ f_{j,i}) = \sum_{u=1}^2 \nabla_{df_{j,i} (e_u)} \big\langle  (X \circ f_{j,i} ), df_{j,i} (e_u) \big\rangle = \sum_{u=1}^2 \nabla_{e_u} \big\langle X \circ f_{j,i}, df_{j,i} (e_u) \big\rangle. \]
So by Stokes' Theorem
\[ \int_{D^2} \DIV_{f_{j,i}} (X \circ f_{j,i}) = \int_{\partial D^2} \big\langle X \circ f_{j,i}, df_{j,i} (s) \big\rangle ds = \int_{S^1(l_{j,i})} \big\langle \nu_{j,i} (s), X_{\gamma_{j,i} (s)} \big\rangle ds, \]
where in the second term $s \in \partial D^2$ is viewed both as a point in $\partial D^2$ and a unit tangent vector.
Plugging this back into (\ref{eq:dtAffirstvariation}), yields
\begin{equation} \label{eq:dtAplussum}
 \frac{d}{dt^+} \Big|_{t = 0}  A ( \Phi_t \circ f |_{\partial F_j} ) \leq   \sum_i \int_{S^1(l_{j,i})} \big\langle \nu_{j,i} (s), X_{\gamma_{j,i} (s)} \big\rangle ds. 
\end{equation}
Now consider for each $k = 1, \ldots, m_0$ the loop that is composed of $\gamma_k$ and $\Phi_t \circ \gamma_k$ (recall that the endpoints of $\gamma_k$ are left invariant by $\Phi_t$).
This loop bounds the disk that is described by the map $H_{t,k} : [0, l_k] \times [0, t] \to M$ with $(s,t') \mapsto \Phi_{t'} (\gamma_k(s))$.
Note that $\area H_{t,k} = t \int_0^{l_k} |X_{\gamma_k(s)} | ds + O(t^2)$ for small $t$.
Moreover, since each loop $f'_t |_{\partial F_j}$ can be obtained from $\Phi_t \circ f |_{\partial F_j}$ by possibly replacing some $\gamma_k$ by $\Phi_t \circ \gamma_k$, we have
\begin{multline*}
 A(f'_t |_{\partial F_1} ) + \ldots + A(f'_t |_{\partial F_n} ) \\ \leq A( \Phi_t \circ f |_{\partial F_1} ) + \ldots + A( \Phi_t \circ f |_{\partial F_n} ) + \area H_{t,1} + \ldots + \area H_{t, m_0}.
\end{multline*}
So taking the derivative at $t = 0$ yields together with (\ref{eq:dtAplussum})
\begin{multline*}
 \frac{d}{dt^+} \Big|_{t = 0} \big( A (f'_t |_{\partial F_1} ) + \ldots + A (f'_t |_{\partial F_n} ) \big) \\ \leq \sum_{j=1}^m \sum_i \int_{S^1(l_{j,i})} \big\langle \nu_{j,i} (s), X_{\gamma_{j,i} (s)} \big\rangle ds  + \sum_{k=1}^{m_0}  \int_0^{l_k} \big| X_{\gamma_k(s)} \big| ds.
\end{multline*}
Together with (\ref{eq:variationisnonnegative}) and (\ref{eq:variationoflength}) this yields
\begin{multline*}
 \sum_{j=1}^m \sum_i \int_{S^1(l_{j,i})} \big\langle \nu_{j,i} (s), X_{\gamma_{j,i} (s)} \big\rangle ds + \sum_{k = m_0 + 1}^m \bigg( - \int_0^{l_k} \big\langle \kappa_k (s), X_{\gamma_k(s)} \big\rangle ds \\
 -\big\langle \gamma'_k (0), X_{\gamma_k (0)} \big\rangle + \big\langle \gamma'_k (l_k), X_{\gamma_k (l_k)} \big\rangle \bigg) + \sum_{k=1}^{m_0}  \int_0^{l_k} \big| X_{\gamma_k(s)} \big| ds \geq 0.
\end{multline*}
Note that by rearrangement
\[  \sum_{j=1}^m \sum_i \int_{S^1(l_{j,i})} \big\langle \nu_{j,i} (s), X_{\gamma_{j,i} (s)} \big\rangle ds = \sum_{k = 1}^m  \int_0^{l_k} \Big\langle \sum_{u = 1}^{v_k} \nu_k^{(u)} (s) , X_{\gamma_k(s)} \Big\rangle ds. \]
So our conclusions applied for $X$ and $-X$ show that the desired inequality holds for all smooth vector fields that vanish on $f ( \partial V \cap V^{(0)} )$.
By continuity it must also hold for all \emph{continuous} vector fields that vanish on $f ( \partial V \cap V^{(0)} )$.
\end{proof}

We can now use this inequality to derive the following identities.

\begin{Lemma} \label{Lem:almosteverywherealong1skeleton}
For every $x \in f(V^{(0)}) \setminus f ( V^{(0)} \cap \partial V)$ the (normalized) directional derivatives of $f$ at every vertex of $V^{(0)}$ that is mapped to $x$, in the direction of each adjacent edge, add up to zero.

Moreover, for every $k = 1, \ldots, m$ and for almost all $s \in [0, l_k]$ the following holds:
If $\gamma_k(s) \notin f(\partial V)$, then
\begin{equation} \label{eq:pointwisenukappa}
 \sum_{k' = 1}^m \sum_{\substack{s' \in E_{k'}  \\ f (s') = f(s)}}  \sum_{u = 1}^{v_{k'}} \nu_{k'}^{(u)} (s') - |f^{-1} (f(s)) | \cdot \kappa_{k} (s) = 0.
\end{equation}
Otherwise
\begin{equation} \label{eq:pointwisenukappaOnBoundary}
 \bigg| \sum_{k' = 1}^m \sum_{\substack{s' \in E_{k'}  \\ f (s') = f(s)}}  \sum_{u = 1}^{v_{k'}} \nu_{k'}^{(u)} (s') - \big( |f^{-1} (f(s)) | -1 \big) \cdot \kappa_{k} (s) \bigg| \leq 1.
\end{equation}
Furthermore, we have the integral inequality
\begin{equation} \label{eq:nutimeskappageq0}
 \sum_{j=1}^n \sum_i \int_{S^1 (l_{j, i})} \big\langle \nu_{j, i}(s) , \kappa_{j,i}(s) \big\rangle ds \geq - \sum_{k = 1}^{m_0} \int_0^{l_k} \big| \kappa_k (s) \big| ds.
\end{equation}
\end{Lemma}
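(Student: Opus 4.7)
The plan is to derive all three assertions from Lemma \ref{Lem:variationofV} by strategic choices of the test vector field $X$.

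For the \textbf{vertex balance} at $p \in V^{(0)} \setminus \partial V$ with $x = f(p)$, pick a small ball $B \subset M$ around $x$ disjoint from $f(V^{(0)}) \setminus \{x\}$ and from $f(\partial V)$. For any $v \in T_x M$ take continuous vector fields $X_\varepsilon$ supported in $B$ with $X_\varepsilon(x) = v$, $|X_\varepsilon| \leq |v|$, whose supports shrink to $\{x\}$. The right-hand side of Lemma \ref{Lem:variationofV} is zero since $X_\varepsilon$ vanishes on $f(\partial V \cap V^{(0)})$. As $\varepsilon \to 0$, the bulk integrals $\int \langle \sum_u \nu^{(u)}_k, X_\varepsilon \rangle ds$ and $\int \langle \kappa_k, X_\varepsilon \rangle ds$ vanish by dominated convergence (since $f^{-1}(x) \cap [0, l_k]$ is finite, hence null), leaving only the endpoint contributions from edges incident to $p$ (all of which satisfy $k > m_0$, as boundary edges never touch an interior vertex). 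Passing to the limit yields
\[ \Big\langle \sum_{k : \gamma_k(l_k) = x} \gamma'_k(l_k) - \sum_{k : \gamma_k(0) = x} \gamma'_k(0), v \Big\rangle = 0 \]
for every $v \in T_x M$, which is the desired cancellation of outward unit tangents.

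For the \textbf{pointwise identities}, I would interpret Lemma \ref{Lem:variationofV} as a statement about vector-valued Radon measures on $M$: the left-hand side is the pairing of $X$ against
\[ \mu = \sum_{k=1}^m (\gamma_k)_* \Big( \sum_{u=1}^{v_k} \nu^{(u)}_k \, ds \Big) - \sum_{k=m_0+1}^m (\gamma_k)_* ( \kappa_k \, ds ) + \text{endpoint deltas}, \]
while the right-hand side controls $|X|$ against the boundary arclength measure $\sum_{k \leq m_0} (\gamma_k)_* ds$. Having established the vertex balance, the endpoint deltas cancel; localising $X$ to a small neighbourhood of an arbitrary point $y = \gamma_k(s) \notin f(V^{(0)})$ and reading off the Radon--Nikodym density along $\gamma_k$, the $\nu$-contribution is $\sum_{s' \in f^{-1}(y)} \sum_u \nu^{(u)}_{k(s')}(s')$, while by Lemma \ref{Lem:speedandcurvatureagreeonintersection} all curvature vectors $\kappa_{k'}(s')$ at preimages $s' \in f^{-1}(y)$ coincide with $\kappa_k(s)$, so the $\kappa$-contribution is $|f^{-1}(y)| \cdot \kappa_k(s)$ when $y \notin f(\partial V)$ (yielding (\ref{eq:pointwisenukappa})), and $(|f^{-1}(y)| - 1) \cdot \kappa_k(s)$ with a defect of at most $1$ when $y \in f(\partial V)$ (since $f|_{\partial V}$ is an embedding, there is exactly one boundary preimage), yielding (\ref{eq:pointwisenukappaOnBoundary}).

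For the \textbf{integral inequality}, I would extend the curvature to a bounded Borel vector field $\td\kappa$ on $M$ with $\td\kappa \circ \gamma_k = \kappa_k$ a.e.\ on each edge---well-defined because Lemma \ref{Lem:speedandcurvatureagreeonintersection} guarantees that the $\kappa_{k'}(s')$ agree at self-intersections---then approximate $\td\kappa$ by continuous vector fields $X_\varepsilon$ that vanish in a neighbourhood of $f(V^{(0)})$, are uniformly bounded in $L^\infty$, and satisfy $X_\varepsilon \circ \gamma_k \to \kappa_k$ a.e. Applying Lemma \ref{Lem:variationofV} to $X_\varepsilon$, the endpoint contributions vanish and dominated convergence yields
\[ \bigg| \sum_{j,i} \int_{S^1(l_{j,i})} \langle \nu_{j,i}, \kappa_{j,i} \rangle ds - \sum_{k = m_0+1}^m \int_0^{l_k} |\kappa_k|^2 ds \bigg| \leq \sum_{k=1}^{m_0} \int_0^{l_k} |\kappa_k| ds, \]
using the rearrangement $\sum_k \int \langle \sum_u \nu^{(u)}_k, X \rangle = \sum_{j,i} \int \langle \nu_{j,i}, X \circ \gamma_{j,i} \rangle$ already appearing in the proof of Lemma \ref{Lem:variationofV}. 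Since $|\kappa_k|^2 \geq 0$ the asserted bound (\ref{eq:nutimeskappageq0}) follows.

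The main obstacle is the combinatorial bookkeeping at self-intersections: the objects $\kappa_{k'}(s')$ and $\nu^{(u)}_{k'}(s')$ come with multiplicities over the preimages of a single point $y \in M$, and one must reorganise these so that the multiplicity factors emerge as the cardinalities $|f^{-1}(y)|$, and so that $\td\kappa$ is genuinely single-valued. Lemma \ref{Lem:speedandcurvatureagreeonintersection} is precisely the tool that makes this collapse work, since it produces a common geodesic curvature vector at almost every self-intersection, after which the accounting is automatic.
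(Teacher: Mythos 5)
Your proposal is a hybrid: the first and third parts are essentially correct (the third is in fact a nice simplification of the paper's argument), but the middle part—the pointwise identities—has a genuine gap.

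For the \textbf{vertex balance}, your shrinking test-field argument matches the paper's approach and is correct (modulo the slip of localizing only at a single vertex $p$ instead of all vertices of $V^{(0)}$ mapping to $x$; the lemma sums over all of them, and your argument extends without change by choosing the ball small enough to isolate $x$).

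For the \textbf{pointwise identities} (\ref{eq:pointwisenukappa}) and (\ref{eq:pointwisenukappaOnBoundary}), the Radon--Nikodym framing is suggestive but is not a proof, and the gap is precisely the one you flag at the end (``combinatorial bookkeeping at self-intersections''). To ``read off the density'' at $y = \gamma_k(s)$ you would need that \emph{each} parameter $s'$ in $f^{-1}(y)$ is simultaneously a Lebesgue point of the corresponding integrand $\sum_u \nu_{k'}^{(u)} - \kappa_{k'}$, and that the a.e.\ identifications of Lemma \ref{Lem:speedandcurvatureagreeonintersection} hold at all those $s'$ at once. Since the intersection sets $X_{k,k'}$ may have positive measure with empty interior and can be nested in complicated ways, this requires a genuine decomposition of the parameter interval into pieces where the intersection combinatorics are constant, together with an argument that the smaller pieces can be handled first. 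This is exactly what the paper's sets $\mathcal{D}_{k,e,I'}$ and the induction on $|I'|$ accomplish: the induction hypothesis kills the contributions from all $I' \subsetneq I^*$ so that when $X$ is localized near $\mathcal{D}_{k_{I^*},e_{I^*},I^*}$ only that one summand survives. Your sketch has no mechanism to isolate the summand for a fixed intersection pattern from the summands for coarser or finer patterns, so it does not yield (\ref{eq:pointwisenukappa}) or (\ref{eq:pointwisenukappaOnBoundary}) as stated.

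For the \textbf{integral inequality} (\ref{eq:nutimeskappageq0}), your argument is correct and takes a genuinely different, cleaner route. The paper derives (\ref{eq:nutimeskappageq0}) \emph{from} the pointwise identities, after an explicit rearrangement over the domains $\mathcal{D}_{k,e,I'}$ using the auxiliary identity (\ref{eq:nukappaproductondifferentDD}). You instead bypass the pointwise identities entirely: extend the curvature to a single Borel vector field $\td\kappa$ on the image curve (well-defined $\mu$-a.e.\ by Lemma \ref{Lem:speedandcurvatureagreeonintersection}), approximate by continuous fields $X_\varepsilon$ vanishing near $f(V^{(0)})$, feed them into Lemma \ref{Lem:variationofV}, pass to the limit by dominated convergence, and then drop the nonnegative $\sum_{k > m_0}\int |\kappa_k|^2$ term. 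This yields the same (in fact, nominally stronger) conclusion and avoids all the $\mathcal{D}_{k,e,I'}$ machinery. One should just make the Lusin/Tietze construction of $X_\varepsilon$ explicit and note that Lemma \ref{Lem:speedandcurvatureagreeonintersection} must first be applied to short subsegments (as the paper does to choose $N$) so that its hypotheses are satisfied. As written, though, this part only proves (\ref{eq:nutimeskappageq0}); since the Lemma also asserts the pointwise identities, the shortcut does not make the combinatorial induction dispensable for the full statement.
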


\begin{proof}
Recall that all $\kappa_k$ and $\nu_{j, i}$ are uniformly bounded.
Let $X$ be a (not necessarily) continuous vector field on $M$ that vanishes on $f(\partial V \cap V^{(0)})$.
For any $\eps > 0$ let $X^{(\eps)}$ be a vector field that agrees with $X$ on $f(V^{(0)})$, that vanishes outside an $\eps$-neighborhood of $f( V^{(0)} )$ and that satisfies $|X^{(\eps)}| \leq C$ everywhere for some uniform constant $C < \infty$.
For example, $X^{(\eps)}$ can be obtained from $X$ by making $X$ continuous near each point of $f(V^{(0)})$ and multiplying with an appropriate cutoff function.
If we apply the variation formula in Lemma \ref{Lem:variationofV} to each such $X^{(\eps)}$, then the contribution of the integrals goes to zero as $\eps \to 0$, while the other two terms are independent of $\eps$.
So letting $\eps \to 0$ yields
\[ \sum_{k =1}^m \Big( -\big\langle \gamma'_k (0), X_{\gamma_k (0)} \big\rangle + \big\langle \gamma'_k (l_k), X_{\gamma_k (l_k)} \big\rangle \Big) = 0. \]
This implies the very first part of the claim and simplifies the variation formula:
For every continuous vector field $X \in C^0 (M; TM)$ we have
\begin{multline} \label{eq:variationwithoutvertices}
 \Bigg| \sum_{k = 1}^m  \int_0^{l_k} \Big\langle \sum_{u = 1}^{v_k} \nu_k^{(u)} (s) , X_{\gamma_k(s)} \Big\rangle ds - \sum_{k=m_0+1}^m 
 \int_0^{l_k} \big\langle \kappa_k (s), X_{\gamma_k(s)} \big\rangle ds 
 \Bigg| \\
 \leq \sum_{k=1}^{m_0} \int_0^{l_k} \big| X_{\gamma_k (s)} \big| ds.
\end{multline}

Choose $N < \infty$ large enough such that the following holds:
Each curve $\gamma_k$ restricted to a subinterval of length $\frac1N l_k$ is embedded and whenever two curves $\gamma_{k_1}, \gamma_{k_2}$ restricted to subintervals of length $\frac1{N} l_{k_1}, \frac1{N} l_{k_2}$ intersect, then we are in the situation of Lemma \ref{Lem:speedandcurvatureagreeonintersection}, i.e. we can find a coordinate chart $(U, (x_1, \ldots, x_n))$ that contains these subsegments and in which we can find a vector $v \in \IR^n$ with the property that $\langle \gamma'_{k_1}, v \rangle, \langle \gamma'_{k_2}, v \rangle \neq 0$ on both subsegments with respect to the euclidean metric.
Consider now the index set $I = \{ 1, \ldots, m \} \times \{ 0, \ldots, N-1 \}$ and define for every $(k, e) \in I$ and every subset $I' \subset I$ with $(k, e) \in I'$ the domain
\begin{multline*}
 \mathcal{D}_{k, e, I'} = \big\{ s \in [\tfrac{e}{N} l_k, \tfrac{e+1}{N} l_k] \;\; : \;\; \gamma_k (s) \in \gamma_{k'} \big( [\tfrac{e'}{N} l_{k'}, \tfrac{e'+1}{N} l_{k'}] \big) \\ \text{if and only if $(k',e') \in I'$} \big\}.
\end{multline*}
These sets are measurable and for all $(k, e) \in I$
\[ \mathop{\dot\bigcup}_{\substack{I' \subset I \\ (k, e) \in I'}} \mathcal{D}_{k, e, I'} = [\tfrac{e}{N} l_k, \tfrac{e+1}{N} l_k]. \]
Moreover, since $f |_{\partial V} = f_0 |_{\partial V}$ is injective, we find that $\mathcal{D}_{k,e,I'}$ is empty or finite whenever there are two distinct pairs $(k', e'), (k'', e'') \in I'$ for which $k', k'' \leq m_0$.

Consider now two pairs $(k_1, e_1), \linebreak[1] (k_2, e_2)$ and a subset $I' \subset I$ such that $(k_1, e_1), \linebreak[1] (k_2, e_2) \in I'$ and assume that $\mathcal{D}_{k_1, e_1, I'}$ (and hence also $\mathcal{D}_{k_2, e_2, I'}$) is non-empty.
We can now apply the second part of Lemma \ref{Lem:speedandcurvatureagreeonintersection} and obtain a continuously differentiable map $\varphi : [\frac{e_1}N, \frac{e_1+1}N] \to \IR$, whose derivative vanishes nowhere, for which the following holds:
$\varphi (\mathcal{D}_{k_1, e_1, I'}) = \mathcal{D}_{k_2, e_2, I'}$ and $\gamma_{k_1} (s) = \gamma_{k_2} (\varphi(s))$ for all $s \in \mathcal{D}_{k_1, e_1, I'}$.
Moreover, for almost every $s \in \mathcal{D}_{k_1, e_1, I'}$ we have $\varphi'(s) = \pm 1$ and $\kappa_{k_1} (s) = \kappa_{k_2} (\varphi(s))$.
So the following three identities hold for every continuous vector field $X \in C^0(M; TM)$
\begin{alignat}{1} 
 \int_{\mathcal{D}_{k_1, e_1, I'}} \big\langle \kappa_{k_1} (s), X_{\gamma_{k_1} (s)} \big\rangle ds &= \int_{\mathcal{D}_{k_2, e_2, I'}} \big\langle \kappa_{k_2} (s), X_{\gamma_{k_2} (s)} \big\rangle ds, \label{eq:kappaintoverdifferentDD} \displaybreak[2] \\
 \int_{\mathcal{D}_{k_1, e_1, I'}} \Big\langle \sum_{u = 1}^{v_{k_2}} \nu_{k_2}^{(u)} (\varphi(s)), X_{\gamma_{k_1}(s)} \Big\rangle ds &= \int_{\mathcal{D}_{k_2, e_2, I'}} \Big\langle \sum_{u = 1}^{v_{k_2}} \nu_{k_2}^{(u)} (s), X_{\gamma_{k_2}(s)}  \Big\rangle ds, \label{eq:nuintoverdifferentDD} \displaybreak[2] \\ 
 \int_{\mathcal{D}_{k_1, e_1, I'}} \Big\langle \sum_{u=1}^{v_{k_2}} \nu_{k_2}^{(u)} (\varphi(s)), \kappa_{k_1} (s) \Big\rangle ds &= \int_{\mathcal{D}_{k_2, e_2, I'}} \Big\langle \sum_{u=1}^{v_{k_2}} \nu_{k_2}^{(u)} (s), \kappa_{k_2} (s) \Big\rangle ds.  \label{eq:nukappaproductondifferentDD}
\end{alignat}

Next we express both sides of (\ref{eq:variationwithoutvertices}) as sums of integrals over the domains $\mathcal{D}_{k, e, I'}$.
\begin{multline*}
 \Bigg| \sum_{I' \subset I} \bigg( \sum_{(k,e) \in I'} \int_{\mathcal{D}_{k, e, I'}} \Big\langle \sum_{u = 1}^{v_k} \nu_k^{(u)} (s) , X_{\gamma_k(s)} \Big\rangle ds - \sum_{\substack{(k,e) \in I' \\ k > m_0}} \int_{\mathcal{D}_{k, e, I'}} \Big\langle \kappa_k (s), X_{\gamma_k(s)} \Big\rangle ds  \bigg) \Bigg| \\
 \leq \sum_{I' \subset I} \sum_{\substack{(k,e) \in I' \\ k \leq m_0}} \int_{\mathcal{D}_{k,e,I'}} \big| X_{\gamma_k(s)} \big| ds.
\end{multline*}
We will now group integrals whose values are the same.
To do this set $I_0 = \{ 1, \ldots, m_0 \} \times \{ 0, \ldots, N-1 \}$ and for each $\emptyset \neq I' \subset I$ choose a pair $(k_{I'}, e_{I'}) \in I'$ such that $(k_{I'}, e_{I'}) \in I_0$ whenever $I' \cap I_0 \neq \emptyset$.
Using (\ref{eq:kappaintoverdifferentDD}) and (\ref{eq:nuintoverdifferentDD}) we may then express the integrals over the domains $\mathcal{D}_{k, e, I'}$ in the last inequality in terms of integrals over the domains $\mathcal{D}_{k_{I'}, e_{I'}, I'}$.
This yields
\begin{multline} \label{eq:variationsplitupinDD}
\Bigg| \sum_{\emptyset \neq I' \subset I} \int_{\mathcal{D}_{k_{I'} ,e_{I'} , I'}} \Big\langle \sum_{k =1}^m  \sum_{\substack{s' \in E_k  \\ f (s') = f(s)}} \sum_{u = 1}^{v_k} \nu_k^{(u)} (s') - |I' \cap (I \setminus I_0)| \cdot \kappa_{k_{I'}} (s), X_{\gamma_{k_{I'}}(s)} \Big\rangle ds \Bigg|  \\
 \leq \sum_{\substack{\emptyset \neq I' \subset I \\ I' \cap I_0 \neq \emptyset}} \int_{\mathcal{D}_{k_{I'} ,e_{I'} , I'}} \big| X_{\gamma_k(s)} \big| ds.
\end{multline}
Note that all summands involving $\emptyset \neq I' \subset I$ for which $I' \cap I_0$ contains more than one element vanish since those consist of integrals over a finite set.
So for all remaining summands and all $(k,e) \in I' \subset I$ for almost every $s \in \mathcal{D}_{k,e, I'}$ the quantity $|I' \cap (I \setminus I_0)|$ is equal to $|f^{-1} (f (s))|$ if $\gamma_k (s) \notin f(\partial V)$ (or equivalently if $I' \cap I_0 = \emptyset$) or equal to $|f^{-1} (f (s))| - 1$ if $\gamma_k (s) \in f(\partial V)$ (or equivalently if $|I' \cap I_0| = 1$).
So the first factor in the scalar product on the left hand side of (\ref{eq:variationsplitupinDD}) is equal to the left hand side of equation (\ref{eq:pointwisenukappa}) or (\ref{eq:pointwisenukappaOnBoundary}), depending on $I'$, almost everywhere.

We will now show by induction on $|I'|$ that for every $\emptyset \neq I' \subset I$ equation (\ref{eq:pointwisenukappa}) or (\ref{eq:pointwisenukappaOnBoundary}) holds for almost every $s \in \mathcal{D}_{k_{I'}, n_{I'}, I'}$.
Using the previous conclusions, which related $\mathcal{D}_{k, e, I'}$ to $\mathcal{D}_{k_{I'}, e_{I'}, I'}$ for any other $(k,e) \in I'$, this will then imply the desired statement.
So let $\emptyset \neq I^* \subset I$ and assume that for all $\emptyset \neq I' \subsetneq I^*$ equation (\ref{eq:pointwisenukappa}) or (\ref{eq:pointwisenukappaOnBoundary}) holds for almost every $s \in \mathcal{D}_{k_{I'}, n_{I'}, I'}$.
This implies that the terms involving subsets $I'$ in the sums on both sides of the inequality (\ref{eq:variationsplitupinDD}) vanish whenever $\emptyset \neq I' \subsetneq I$ and $I' \cap I_0 = \emptyset$.

Consider now some $s_0 \in \mathcal{D}_{k_{I^*}, e_{I^*}, I^*}$.
Then we can find an open neighborhood $U \subset M$ around $\gamma_{k_{I^*} } (s_0)$ such that $\gamma_k([\frac{e}{N} l_k, \frac{e+1}{N} l_k]) \cap U \neq \emptyset$ if and only if $(k, e) \in I^*$.
So as long as $X \in C^0 ( M ; TM)$ is supported in $U$, the summands in (\ref{eq:variationsplitupinDD}) involving $\emptyset \neq I' \subset I$ with $\emptyset \neq I' \not\subset I^*$ vanish.
Therefore the only summands that are not a priori zero are the summand involving the subset $I' = I$ and all proper subsets $I' \subsetneq I^*$ for which $| I' \cap I_0 | = 1$.

Consider first the case in which $I^* \cap I_0 = \emptyset$.
Then the previous conclusion implies that only the summand involving $I^*$ on the left hand side of (\ref{eq:variationsplitupinDD}) is not a priori zero and that the right hand side of this equation is zero.
So
\[ \int_{\mathcal{D}_{k_{I^*} ,e_{I^*} , l^*}} \Big\langle \sum_{k =1}^m  \sum_{\substack{s' \in E_k  \\ f (s') = f(s)}} \sum_{u = 1}^{v_k} \nu_k^{(u)} (s') - |f^{-1} (f(s))| \cdot \kappa_{k_{I^*}} (s), X_{\gamma_{k_{I^*}}(s)} \Big\rangle ds = 0 \]
for all $X \in C^0(M; TM)$ that are supported in $U$.
Since $\gamma_{k_{I^*}(s)}$ restricted to $[\frac{e_{I^*}}N l_{k_{I^*}}, \frac{e_{I^*} + 1}N l_{k_{I^*}}]$ is an embedding, this implies that
\[ \int_{\mathcal{D}_{k_{I^*} ,e_{I^*} , l^*}} \Big\langle \sum_{k =1}^m  \sum_{\substack{s' \in E_k  \\ f (s') = f(s)}} \sum_{u = 1}^{v_k} \nu_k^{(u)} (s') - |f^{-1} (f(s))| \cdot \kappa_k (s), X (s) \Big\rangle ds = 0 \]
for every compactly supported continuous vector function $X \in C^0 ( \gamma_{k_{I^*}}^{-1} (U) \cap \linebreak[1] [\frac{e_{I^*}}N l_{k_{I^*}}, \linebreak[1] \frac{e_{I^*} + 1}N l_{k_{I^*}}])$.
So (\ref{eq:pointwisenukappa}) holds almost everywhere on $\mathcal{D}_{k_{I^*} ,e_{I^*} , l^*} \cap \gamma_{k_{I^*}}^{-1} (U) \cap \linebreak[1] [\frac{e_{I^*}}N l_{k_{I^*}}, \linebreak[1] \frac{e_{I^*} + 1}N l_{k_{I^*}}]$.
Since $s_0$ was chosen arbitrarily within $\mathcal{D}_{k_{I^*}, e_{I^*}, I^*}$, this shows that (\ref{eq:pointwisenukappa}) holds for almost every $s \in \mathcal{D}_{k_{I^*} ,e_{I^*} , l^*}$, which finishes the induction in the first case.

Next consider the case in which $I^* \cap I_0 = \{ (k_{I^*}, e_{I^*}) \}$.
Then for every non-zero summand in (\ref{eq:variationsplitupinDD}) involving $I'$ we have $(k_{I'}, e_{I'}) = (k_{I^*}, e_{I^*}) =: (k_0, e_0)$.
Since the union of all domains $\mathcal{D}_{k_0, e_0, I'}$ for which $(k_0, e_0) \in I'$ is equal to the interval $[\frac{e_0}N l_{k_0}, \frac{e_0 + 1}{N} l_{k_0}]$, inequality (\ref{eq:variationsplitupinDD}) implies that
\begin{multline*}
\Bigg| \int_{\frac{e_0}N l_{k_0}}^{\frac{e_0+1}N l_{k_0}}  \Big\langle \sum_{k =1}^m  \sum_{\substack{s' \in E_k  \\ f (s') = f(s)}} \sum_{u = 1}^{v_k} \nu_k^{(u)} (s') - \big( | f^{-1} (f(s)) \big| - 1\big) \cdot \kappa_{k_0} (s), X_{\gamma_{k_0}(s)} \Big\rangle ds \Bigg| \\
 \leq \int_{\frac{e_0}N l_{k_0}}^{\frac{e_0+1}N l_{k_0}} \big| X_{\gamma_k(s)} \big| ds
\end{multline*}
for all $X \in C^0(M; TM)$ that are supported in $U$.
As in the first case, we conclude that
\begin{multline*}
\Bigg| \int_{\frac{e_0}N l_{k_0}}^{\frac{e_0+1}N l_{k_0}}  \Big\langle \sum_{k =1}^m  \sum_{\substack{s' \in E_k  \\ f (s') = f(s)}} \sum_{u = 1}^{v_k} \nu_k^{(u)} (s') - \big( | f^{-1} (f(s)) \big| - 1\big) \cdot \kappa_{k_0} (s), X (s) \Big\rangle ds \Bigg| \\
 \leq \int_{\frac{e_0}N l_{k_0}}^{\frac{e_0+1}N l_{k_0}} \big| X(s) \big| ds
\end{multline*}
for every compactly supported continuous vector function $X \in C^0 ( \gamma_{k_0}^{-1} (U) \cap \linebreak[1] [\frac{e_0}N l_{k_0}, \linebreak[1] \frac{e_0 + 1}N l_{k_0}])$.
This implies that (\ref{eq:pointwisenukappaOnBoundary}) holds for almost all $s \in \mathcal{D}_{k_0, e_0, I^*} \subset [\frac{e_0}N l_{k_0}, \frac{e_0 + 1}{N} l_{k_0}]$ and finishes the induction in the second case.

Finally, we prove (\ref{eq:nutimeskappageq0}).
Observe that by rearrangement we have
\[ \sum_{j=1}^n \sum_i \int_{S^1(l_{j, i})} \langle \nu_{j,i} (s), \kappa_{j,i} (s) \rangle ds = \sum_{k=1}^m \sum_{u=1}^{v_k} \int_0^{l_k} \big\langle \nu_k^{(u)} (s), \kappa_k (s) \big\rangle ds. \]
Using (\ref{eq:nukappaproductondifferentDD}) we conclude further that
\begin{multline*}
\sum_{j=1}^n \sum_i \int_{S^1 (l_{j, i})} \big\langle \nu_{j, i}(s) , \kappa_{j,i}(s) \big\rangle ds = 
 \sum_{k=1}^m \sum_{u=1}^{v_k} \int_0^{l_k} \big\langle \nu_k^{(u)} (s), \kappa_k (s) \big\rangle ds \\ 
 = \sum_{I' \subset I} \sum_{(k,e) \in I'} \int_{\mathcal{D}_{k, e, I'}} \Big\langle \sum_{u = 1}^{v_k} \nu_k^{(u)} (s) , \kappa_k(s) \Big\rangle ds \\
 = \sum_{\emptyset \neq I' \subset I} \int_{\mathcal{D}_{k_{I'}, e_{I'}, I'}} \Big\langle \sum_{k = 1}^m \sum_{\substack{s' \in E_k  \\ f (s') = f(s)}} \sum_{u = 1}^{v_k} \nu_k^{(u)} (s'), \kappa_{k_{I'}} (s) \Big\rangle ds.
\end{multline*}
We now apply (\ref{eq:pointwisenukappa}) to all summands for which $I' \cap I_0 = \emptyset$ and (\ref{eq:pointwisenukappaOnBoundary}) to all summands for which $I' \cap I_0 \neq \emptyset$.
Then we obtain that the right hand side of the previous equation is bounded from below by
\begin{multline*}
 \sum_{\substack{\emptyset \neq I' \subset I \\ I' \cap I_0 = \emptyset}} \int_{\mathcal{D}_{k_{I'}, n_{I'}, I'}} |I'| \cdot \big\langle \kappa_{k_{I'}} (s), \kappa_{k_{I'}} (s) \big\rangle ds   \\
+ \sum_{\substack{\emptyset \neq I' \subset I \\ I' \cap I_0 \neq \emptyset}} \int_{\mathcal{D}_{k_{I'}, n_{I'}, I'}} \Big( (|I'|-1) \cdot 
\langle \kappa_{k_{I'}} (s), \kappa_{k_{I'}} (s) \big\rangle - \big| \kappa_{k_{I'}} (s) \big| \Big) ds \displaybreak[1] \\
\geq - \sum_{\substack{\emptyset \neq I' \subset I \\ I' \cap I_0 \neq \emptyset}} \int_{\mathcal{D}_{k_{I'}, n_{I'}, I'}} \big| \kappa_{k_{I'}} (s) \big| ds
= - \sum_{k = 1}^{m_0} \int_0^{l_k} \big| \kappa_k (s) \big| ds .
\end{multline*}
This establishes the claim.
\end{proof}

\subsection{Summary}
We conclude this section by summarizing the important results that are needed in section \ref{sec:areaevolutionunderRF}.

\begin{Proposition} \label{Prop:existenceofVminimizer}
Consider a compact Riemannian manifold $(M,g)$ with $\pi_2(M) \linebreak[0] = 0$.
Let $V$ be a finite simplicial complex whose faces are $F_1, \ldots, F_n$ and $f_0 : V \to M$ a continuous map such that $f_0 |_{\partial V}$ is a smooth embedding.
Furthermore, let $\gamma_k : [0, l_k] \to M$, $(k = 1, \ldots, m_0)$ be arclength parameterizations of $f$ restricted to the edges of $\partial V$ and $\kappa_k : [0, l_k] \to TM$ the geodesic curvature of $\gamma_k$

Then the following is true:
\begin{enumerate}[label=(\alph*)]
\item There is a map $f : V^{(1)} \to M$ that restricted to every edge $E \subset V^{(1)}$ is a $C^{1,1}$-immersion such that $f$ is homotopic to $f_0 |_{V^{(1)}}$ relative to $\partial V$ and
\[ A( f|_{\partial F_1} ) + \ldots + A( f|_{\partial F_n} ) + \ell (f)  = A^{(1)} (f_0). \]
\item Consider for each $j = 1, \ldots, n$ the loop $f |_{\partial F_j}$ and apply Proposition \ref{Prop:PlateauProblem} to obtain the loops $\gamma_{j,i} : S^1 (l_{j,i}) \to M$.
Let $p_{j,i}$ be the (finitely many) number of places where $\gamma_{j,i}$ is not differentiable.
Then $p_{j,i} = 2$ for almost every $i, j$ and
\[  \sum_i (p_{j,i} - 2) \leq  1. \]
\item For each loop $\gamma_{j,i}$, as defined in assertion (b), consider an arbitrary solution $f_{j,i} : D^2 \to M$ to the associated Plateau problem with respect to the metric $g$.
Consider moreover a smooth family $(g_t)_{t \in [0, \varepsilon)}$ of metrics with $g_0 = g$ and denote by $A_t ( \cdot )$, the infimal area $A( \cdot )$ with respect to the metric $g_t$.
Then for any $j = 1, \ldots, n$ we have in the barrier sense
\[ \frac{d}{dt^+} \Big|_{t = 0} A_t (f |_{\partial F_j} ) \leq \sum_i \int_{D^2} \frac{d}{dt} \Big|_{t = 0} d\vol_{f_{j,i}^* (g_t)}. \]
\item For each loop $\gamma_{j, i}$ the geodesic curvature $\kappa_{j,i} : S^1 (l_{j,i}) \to TM$ is defined almost everywhere.
Consider again the maps $f_{j,i} : D^2 \to M$ from before and let $\nu_{j,i} : S^1 (l_{j,i}) \to TM$ be unit vector fields along $\gamma_{j,i}$ that are orthogonal to $\gamma_{j,i}$ and outward pointing tangential to $f_{j,i}$.
Then
\[ \sum_{j=1}^n \sum_i \int_{S^1 (l_{j, i})} \big\langle \nu_{j, i}(s) , \kappa_{j,i}(s) \big\rangle ds \geq - \sum_{k = 1}^{m_0} \int_0^{l_k} \big| \kappa_k (s) \big| ds. \]
\end{enumerate}
\end{Proposition}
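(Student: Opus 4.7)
The plan is that this proposition is essentially a compilation of the four main results proved in subsections \ref{subsec:regularityon1skeleton}--\ref{subsec:1skeletonstruc}, so the proof consists of matching each assertion to the relevant lemma and checking one combinatorial count.

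For (a), I would reproduce the construction from subsection \ref{subsec:regularityon1skeleton}: pick a minimizing sequence $f_k$ for $A^{(1)}(f_0)$, reparameterize $f_k|_E$ by constant speed on each edge $E\subset V^{(1)}$, and use the uniform length bound together with Arzel\`a--Ascoli (after passing to a subsequence) to extract a Lipschitz limit $f:V^{(1)}\to M$. Lemma \ref{Lem:existenceon1skeleton} then shows that $f$ is homotopic to $f_0|_{V^{(1)}}$ relative to $\partial V$ and that $\sum_j A(f|_{\partial F_j}) + \ell(f) = A^{(1)}(f_0)$, while Lemma \ref{Lem:regularityon1skeleton} upgrades the regularity of $f$ on each edge to $C^{1,1}$.

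For (b), I would feed each boundary loop $f|_{\partial F_j}$ into Proposition \ref{Prop:PlateauProblem}, producing the $\gamma_{j,i}$. The key observation is that, by (a), $f$ is $C^{1,1}$ along the interior of every edge, so the only places where $f|_{\partial F_j}$ can fail to be differentiable are the three vertices of the triangle $F_j$. Hence the number $p$ of non-differentiable points of $f|_{\partial F_j}$ satisfies $p\le 3$, and assertion (f) of Proposition \ref{Prop:PlateauProblem} gives
\[
\sum_i (p_{j,i}-2)\ \le\ p-2\ \le\ 1,
\]
with $p_{j,i}=2$ for all but finitely many $i$. For (c), applying Lemma \ref{Lem:variationofA} to the loop $f|_{\partial F_j}$ together with its Douglas-type decomposition $\gamma_{j,i}$ and the chosen solutions $f_{j,i}$ gives the barrier estimate directly. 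For (d), the inequality is exactly estimate (\ref{eq:nutimeskappageq0}) of Lemma \ref{Lem:almosteverywherealong1skeleton}, once one notes that the vector fields $\nu_{j,i}$ and $\kappa_{j,i}$ used there are the same ones as in the statement (outward unit conormals tangential to $f_{j,i}$, and a.e.-defined geodesic curvatures).

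The only nontrivial step — and by far the most delicate point already addressed in the section — is that all of this must survive the presence of possibly infinitely many self-intersections of $f$ along the $1$-skeleton; this is why the variational identity in (d) cannot be read off locally from a classical Euler--Lagrange argument, but instead requires the careful combinatorial bookkeeping on the sets $\mathcal{D}_{k,e,I'}$ carried out in Lemma \ref{Lem:almosteverywherealong1skeleton}. Since that bookkeeping is already in place, the remaining work for the proposition is purely organizational, and I would write the proof as a short four-paragraph dispatch, one paragraph per assertion, citing the relevant lemmas.
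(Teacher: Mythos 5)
Your proposal matches the paper's own proof exactly: assertion (a) follows from Lemma \ref{Lem:existenceon1skeleton} and Lemma \ref{Lem:regularityon1skeleton}, assertion (b) from Proposition \ref{Prop:PlateauProblem}(f) together with the observation that $f|_{\partial F_j}$ can fail to be differentiable only at the three corners, assertion (c) from Lemma \ref{Lem:variationofA}, and assertion (d) from inequality (\ref{eq:nutimeskappageq0}) in Lemma \ref{Lem:almosteverywherealong1skeleton}. The commentary on the self-intersection bookkeeping is accurate context but not part of the proof itself.
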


\begin{proof}
Assertion (a) is a consequence of Lemma \ref{Lem:existenceon1skeleton}, and the preceding discussion, and Lemma \ref{Lem:regularityon1skeleton}.
Assertion (b) is a restatement of Proposition \ref{Prop:PlateauProblem}(f).
For this note that $f |_{\partial F_j}$ is differentiable everywhere except possibly at its three corners.
Assertions (c) is a restatement of Lemma \ref{Lem:variationofA} and assertion (d) is a restatement of (\ref{eq:nutimeskappageq0}) in Lemma \ref{Lem:almosteverywherealong1skeleton}.
\end{proof}

\begin{Remark} \label{Rmk:Alambda}
For any $\lambda > 0$ consider the quantity
\[ A^{(\lambda)} (f_0) := \inf \big\{ \area (f') + \lambda \ell( f' |_{V^{(1)}} )  \;\; : \;\; f' \simeq f_0 \;\; \text{relative to $\partial V$} \big\}. \]
Then all assertions of Proposition \ref{Prop:existenceofVminimizer} hold with $A^{(1)}$ replaced by $A^{(\lambda)}$ (in assertion (a) we have to insert the factor $\lambda$ in front of $\ell (f)$).
This follows by rescaling the metric $g$ by a factor of $\lambda$.
\end{Remark}

\section{Area evolution under Ricci flow} \label{sec:areaevolutionunderRF}
\subsection{Overview}
Let in this section $M$ be a closed $3$-manifold with $\pi_2 (M) = 0$.
Consider a finite simplicial complex $V$ whose faces are denoted by $F_1, \ldots, F_n$ and a continuous map $f_0 : V \to M$ such that $f_0 |_{\partial V}$ is a smooth embedding.

Consider a Ricci flow $(g_t)_{t \in [T_1, T_2]}$ on $M$ such that $\scal_t \geq - \frac{3}{2t}$ on $M$ for all $t \in [T_1, T_2]$.
The goal of this section is to study the evolution of the time dependent quantity
\[ A_t (f_0) : = \inf \big\{ \area_t f' \;\; : \;\; f' \simeq f_0 \;\; \text{relative to $\partial V$} \big\} \]
as introduced in section \ref{sec:existenceofvminimizers}.
We now explain our strategy in this section.
Assume first that for some time $t_0 \in [T_1, T_2]$ there is an embedded minimizer $f : V \to M$ in the homotopy class of $f_0$ (relative to $\partial V$), i.e. $\area_{t_0} f = A_{t_0} (f_0)$.
Then by a simple variational argument, we can conclude that at every edge $E \subset V^{(1)} \setminus \partial V$ the unit vector fields $\nu_E^{(1)}, \ldots, \nu_E^{(v_E)}$ along $f |_E$ that are orthogonal to $f |_E$ and outward pointing tangential to the $v_E$ faces which are adjacent to $E$, satisfy the following identity
\begin{equation} \label{eq:nuadduptozero}
 \nu_E^{(1)} + \ldots + \nu_E^{(v_E)} = 0.
\end{equation}
We can then use Hamilton's method (as presented in the proofs of Propositions \ref{Prop:evolsphere} and \ref{Prop:evolminsurfgeneral}) to compute the time derivative of the area of the minimal disk $f |_{F_j}$ for every $j = 1, \ldots, n$
\begin{equation} \label{eq:derivativeonfaceillustration}
 \frac{d}{dt} \Big|_{t = t_0}  \area_t (f |_{F_j} ) \leq \frac3{4t_0} \area_{t_0} (f |_{F_j} ) + \pi  - \int_{\partial F_j} \big\langle \nu_{\partial F_j}, \kappa_{\partial F_j} \big\rangle.
\end{equation}
Here $\nu_{\partial F_j}$ is the unit vector field which is normal to $f |_{\partial F_j}$ and outward pointing tangential to $f |_{F_j}$ and $\kappa_{\partial F_j}$ is the geodesic curvature of $f |_{\partial F_j}$.
Now we add up these inequalities for $j = 1, \ldots, n$.
The sum of the integrals on the right hand side can be rearranged and grouped into integrals over each edge of $\partial V$.
By (\ref{eq:nuadduptozero}) the integrals over each edge $E \subset V^{(1)} \setminus \partial V$ cancel each other out and we are left with the integrals over edges $E \subset \partial V$.
So
\[ \frac{d}{dt} \Big|_{t = t_0}  \area_t f \leq \frac3{4t_0} \area_{t_0} (f |_{F_j} ) + \pi n  + \sum_{E \subset \partial V} \int_E |\kappa_E |. \]
This implies that in the barrier sense
\begin{equation} \label{eq:wantthisbarrierbound}
 \frac{d}{dt^+} \Big|_{t = t_0}  A_t (f_0) \leq \frac3{4t_0} \area_{t_0} (f |_{F_j} ) + \pi n   + \sum_{E \subset \partial V} \int_E |\kappa_{E} |.
\end{equation}

Unfortunately, as mentioned in section \ref{sec:existenceofvminimizers}, an existence theory for such a minimizer $f$ is hard to come by.
We will however be able to establish the bound (\ref{eq:wantthisbarrierbound}) without the knowledge of this existence using the following trick.
For every $\lambda > 0$ consider the quantity
\[ A_t^{(\lambda)} (f_0) := \inf \big\{ \area_t (f') + \lambda \ell_t ( f' |_{V^{(1)}} )  \;\; : \;\; f' \simeq f_0 \;\; \text{relative to $\partial V$} \big\} \]
as introduced in Remark \ref{Rmk:Alambda}.
It is not hard to see that for each $t \in [T_1, T_2]$
\begin{equation} \label{eq:AlambdaandA0}
 A_t^{(\lambda)} (f_0) \geq A_t (f_0) \qquad \text{and} \qquad \lim_{\lambda \to 0} A_t^{(\lambda)} (f_0) = A_t (f_0).
\end{equation}
The existence theory for a minimizer of $A_t^{(\lambda)}(f_0)$ is far easier and has been carried out in section \ref{sec:existenceofvminimizers}.
Assume for the purpose of clarity that for some time $t_0$ there is an embedded, smooth minimizer $f : V \to M$ for the corresponding minimization problem, i.e. $\area_{t_0} f + \lambda \ell_{t_0} (f |_{V^{(1)}}) = A^{(\lambda)}_{t_0} (f_0)$.
Then identity (\ref{eq:nuadduptozero}) becomes (compare with (\ref{eq:simplenuiskappa}))
\[ \nu^{(1)}_E +  \ldots + \nu^{(v_E)}_E = \lambda \kappa_E. \]
So when adding up inequality (\ref{eq:derivativeonfaceillustration}) for all $j =1, \ldots, n$ and grouping the integrals on the right hand side by edge, we find that luckily the extra term that arises due to this modified identity has the right sign:
\begin{multline*}
 \frac{d}{dt} \Big|_{t = t_0}  \area_t f \leq \frac3{4t_0} \area_{t_0} (f |_{F_j} ) + \pi n  
  + \sum_{E \subset \partial V} \int_E |\kappa_E | 
 - \sum_{E \subset V^{(1)} \setminus \partial V} \int_E \big\langle \lambda \kappa_E, \kappa_E \big\rangle \\
\leq  \frac3{4t_0} \area_{t_0} (f |_{F_j} ) + \pi n 
  + \sum_{E \subset \partial V} \int_E |\kappa_E |.
\end{multline*}
Now choose a function $\lambda : [T_1, T_2] \to (0,1)$ such that $\lambda' (t) < - K_t \lambda(t)$ where $K_t$ is a bound on the Ricci curvature at time $t$.
This is always possible.
Then we can check that
\[ \frac{d}{dt} \Big|_{t = t_0} A^{(\lambda(t))}_t (f_0) \leq \frac3{4t_0} \area_{t_0} (f |_{F_j} ) + \pi n - \sum_{E \subset \partial V} \int_E |\kappa_{\partial F_j} |. \]
Since $\lambda(t)$ can be chosen arbitrarily small, we are able to derive (\ref{eq:wantthisbarrierbound}) using (\ref{eq:AlambdaandA0}).

Note that this is a simplified picture of the arguments that will be presented in the next subsection.
The main difficulty that needs to be overcome stems from the fact that $f : V \to M$ is in general only defined on the $1$-skeleton and not smooth there and that $f$ might have self-intersections.

\subsection{Main part}
In the following Lemma we deduce a bound on a curvature integral over a minimal disk with smooth boundary.
The statement and its proof are similar to parts of the proofs of Propositions \ref{Prop:evolsphere} and \ref{Prop:evolminsurfgeneral}.

\begin{Lemma} \label{Lem:integralboundonsmoothmindisk}
Let $f : D^2 \to M$ be a smooth, harmonic, almost conformal map and set $\gamma = f_{\partial D^2}$.
Denote by $\kappa : S^1 = \partial D^2 \to TM$ the geodesic curvature of $\gamma$ and by $\nu : S^1 \to TM$ the unit vector field along $\gamma$ that is orthogonal to $\gamma$ and outward pointing tangential to $f$ away from possible branch points.
Then
\[ \int_{D^2} \sec^M (df) d{\vol}_{f^*(g)} \geq 2\pi + \int_{S^1} \big\langle \nu(s), \kappa(s) \big\rangle \cdot |\gamma'(s)| ds. \]
Here $\sec^M(df)$ denotes the sectional curvature of $M$ in the direction of the image of $df$.
Note that the integrand on the left hand side is well-defined since the volume form vanishes whenever $df$ is not injective.
\end{Lemma}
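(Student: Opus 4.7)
The strategy is to adapt the $\varepsilon$-embedding trick from the proof of Proposition \ref{Prop:evolminsurfgeneral}, which was introduced precisely to handle the difficulty that $f$ may have branch points. Concretely, for each small $\varepsilon > 0$, I would set $D_\varepsilon = (D^2, \varepsilon^2 g_{\textup{eucl}})$, let $h_\varepsilon : D^2 \to D_\varepsilon$ be the identity (a conformal harmonic diffeomorphism), and form the product map $f_\varepsilon = (f, h_\varepsilon) : D^2 \to M \times D_\varepsilon$. Because $f$ is harmonic and almost conformal and $h_\varepsilon$ is harmonic and conformal, $f_\varepsilon$ is a smooth conformal harmonic \emph{embedding}, and hence its image $\Sigma_\varepsilon = f_\varepsilon(D^2)$ is a smooth embedded minimal disk (without branch points) with $\chi(\Sigma_\varepsilon) = 1$.

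Next, the Gauss equation for minimal surfaces gives the pointwise inequality $\sec^{\Sigma_\varepsilon} \leq \sec^{M \times D_\varepsilon}(T\Sigma_\varepsilon)$ (the extrinsic correction $\det(II)$ is nonpositive for minimal surfaces), and combining with the Gauss--Bonnet theorem on the smoothly embedded disk $\Sigma_\varepsilon$ yields
\[
 \int_{\Sigma_\varepsilon} \sec^{M \times D_\varepsilon}(T\Sigma_\varepsilon) \, d\vol \;\geq\; 2\pi \;-\; \int_{\partial \Sigma_\varepsilon} \kappa^{\Sigma_\varepsilon} \, ds.
\]
I would then pass to the limit $\varepsilon \to 0$. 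Since $D_\varepsilon$ is flat, the sectional curvature of $M \times D_\varepsilon$ on a plane depends only on the $M$-projection, and by dominated convergence the left-hand side converges to $\int_{D^2} \sec^M(df) \, d\vol_{f^*(g)}$; in particular, branch points of $f$ contribute nothing in the limit because there the tangent plane of $\Sigma_\varepsilon$ becomes purely $D_\varepsilon$-directional and the ambient sectional curvature vanishes on such planes.

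For the boundary integral the analysis is a direct replay of the one in the proof of Proposition \ref{Prop:evolminsurfgeneral}. Using the conformality of $h_\varepsilon$ to establish the orthogonality relation (\ref{eq:gammaorthogonaltonu2}) separately in each factor, I would decompose the product covariant derivative and write $\int_{\partial \Sigma_\varepsilon} \kappa^{\Sigma_\varepsilon} \, ds$ as the sum of an $M$-contribution and a $D_\varepsilon$-contribution. As $\varepsilon \to 0$ the $D_\varepsilon$-contribution tends to zero (the $\varepsilon^{-1}$ factor is absorbed by the decay of $|\nu^{D_\varepsilon}|$ and $|(\gamma^{D_\varepsilon})'|$ on the shrinking $D_\varepsilon$-boundary), while the $M$-contribution converges to $-\int_{S^1} \langle \nu(s), \kappa(s) \rangle \, |\gamma'(s)| \, ds$, because away from branch points the tangential normal $\nu^M_\varepsilon$ and the $M$-speed of the boundary converge to $\nu$ and $|\gamma'|$ respectively, and the geodesic curvature of $\gamma_\varepsilon^M$ identifies with $\kappa$ in the limit. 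Combining the two limits gives the claimed inequality.

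The main obstacle is precisely the presence of branch points on $\partial D^2$, which obstruct a direct application of Gauss--Bonnet on $D^2$ itself: $\nu$ is not defined there and $\gamma$ need not be a genuine immersion. The product embedding smooths away these branch points by making $f_\varepsilon$ an embedding, and the delicate piece is tracking how the $D_\varepsilon$-contribution to the boundary integral decays in the limit while still producing the correct $M$-contribution as the line integral of $\langle \nu, \kappa \rangle \, |\gamma'|$; both of these limit statements are pointwise-a.e. statements settled by dominated convergence once the decomposition is in place.
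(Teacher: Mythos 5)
Your proposal follows the same route as the paper: the $\varepsilon$-embedding trick $f_\varepsilon = (f, h_\varepsilon) : D^2 \to M \times D_\varepsilon$, the pointwise comparison $\sec^{\Sigma_\varepsilon} \leq \sec^{M\times D_\varepsilon}(T\Sigma_\varepsilon)$ via the Gauss equation for minimal surfaces, Gauss--Bonnet on the embedded disk $\Sigma_\varepsilon$, decomposition of the boundary geodesic-curvature integral into an $M$-part and a $D_\varepsilon$-part using the conformality-based orthogonality relation, and passage to the limit $\varepsilon\to 0$. This is exactly how the paper argues (it even explicitly refers back to the boundary-integral computation in the proof of Proposition \ref{Prop:evolminsurfgeneral}), and your bookkeeping of the signs, the branch-point contribution, and the vanishing of the $D_\varepsilon$-part is correct.
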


\begin{proof}
In order to avoid issues arising from possible branch points (especially on the boundary of $\Sigma$), we employ the following trick (compare with the proof of Proposition \ref{Prop:evolminsurfgeneral}):
Denote by $g_{\textnormal{eucl}}$ the euclidean metric on $D^2$ and consider for every $\varepsilon > 0$ the Riemannian manifold $(D_\varepsilon = D^2, \varepsilon g_{\textnormal{eucl}})$.
The identity map $h_\varepsilon : D^2 \to (D^2, \varepsilon g_{\textnormal{eucl}})$ is a harmonic and conformal diffeomorphism and hence the map $f_\varepsilon = (f, h_\varepsilon) : D^2 \to M \times D_\varepsilon$ is a harmonic and conformal \emph{embedding}.
Denote its image by $\Sigma_\varepsilon = f_\varepsilon(D^2) \subset M \times D_\varepsilon$.
Since the sectional curvatures on the target manifold are bounded, we have
\[ \lim_{\varepsilon \to 0} \int_{\Sigma_\varepsilon} \sec^{M \times D_\varepsilon} ( T \Sigma_\varepsilon) d {\vol} =  \int_{\Sigma}  \sec^M ( df) d {\vol}_{f^*(g)}, \]
where $d{\vol}$ on the left hand side denotes the induced volume form and the integrand denotes the function on $\Sigma_\varepsilon$ that assigns to each point the (ambient) sectional curvature of $M \times D_\varepsilon$ in the direction of its tangent space.

Since $\Sigma_\varepsilon$ is a minimal surface, its interior sectional curvatures are not larger than the corresponding ambient ones.
So we obtain together with Gau\ss-Bonnet
\[ \int_{\Sigma_\varepsilon} \sec^{M \times D_\varepsilon} ( T \Sigma_\varepsilon) d {\vol} \geq \int_{\Sigma_\varepsilon} \sec^{\Sigma_\varepsilon} d{\vol} = 2\pi + \int_{\partial \Sigma_\varepsilon} \kappa^{\Sigma_\varepsilon}_{\partial \Sigma_\varepsilon} d s. \]
Here $\kappa^{\Sigma_\varepsilon}_{\partial \Sigma_\varepsilon}$ denotes the geodesic curvature of the boundary circle viewed as a curve within $\Sigma_\varepsilon$.
We can now estimate the last integral similarly as in the proof of Proposition \ref{Prop:evolminsurfgeneral} (more specifically, see equation (\ref{eq:boundaryintegralsexample})).
Then we obtain that
\[
 \lim_{\varepsilon \to 0} \int_{\partial \Sigma_\varepsilon} \kappa_{\partial \Sigma_\varepsilon}^{\Sigma_\varepsilon} d s =  \int_{S^1} \big\langle \nu(s), \kappa(s) \big\rangle |\gamma'(s) | ds.
\]
This implies the claim.
\end{proof}

Next, we extend the bound of Lemma \ref{Lem:integralboundonsmoothmindisk} to minimal disks that  are bounded by not necessarily embedded, piecewise $C^{1,1}$ loops which satisfy the Douglas-type condition.

\begin{Lemma} \label{Lem:integralcurvboundnotsmooth}
Let $\gamma : S^1 \to M$ be a continuous loop that is a piecewise $C^{1,1}$-immersion and let $\theta_1, \ldots, \theta_p$ be the angles between the right and left derivative of $\gamma$ at the points where $\gamma$ is not differentiable.
(Observe that $\theta_i = 0$ means that both derivatives agree).
Assume that $\gamma$ satisfies the Douglas-type condition (see Definition \ref{Def:DouglasCondition}).
Then there is a solution to the Plateau problem $f : D^2 \to M$ for $\gamma$ which has the following property:

The function $f$ is $C^{1,\alpha}$ up to the boundary away from finitely many points.
Let $\nu : S^1 \to TM$ be the unit vector field along $\gamma$ that is orthogonal to $\gamma$ and outward pointing tangential to $f$ away from possibly finitely many points and let $\kappa : S^1 \to TM$ be almost everywhere equal to the the geodesic curvature of $\gamma$.
Then
\[ \int_{D^2} \sec^M (df) d{\vol}_{f^*(g)} \geq 2\pi  - \theta_1 - \ldots - \theta_p + \int_{S^1} \big\langle \nu(s), \kappa(s) \big\rangle \cdot |\gamma'(s)| ds. \]
\end{Lemma}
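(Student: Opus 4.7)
The plan is to parallel the proof of Lemma \ref{Lem:integralboundonsmoothmindisk} with the sole modification that Gauss--Bonnet is replaced by its version for a disk with $p$ corners; these corners produce exactly the correction $-\theta_1 - \ldots - \theta_p$ in the inequality.

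First I would apply Proposition \ref{Prop:PlateauProblem}(a) to obtain a solution $f : D^2 \to M$ of the Plateau problem for $\gamma$. By Proposition \ref{Prop:PlateauProblem}(b), applied on each of the (finitely many) open arcs of $S^1$ on which $\gamma$ is $C^{1,1}$, the map $f$ is $C^{1,\alpha}$ up to the boundary away from the $p$ corner points of $\gamma$ and from finitely many boundary branch points. This regularity makes the outward unit normal field $\nu$ along $\gamma$ well-defined away from this exceptional finite set, and makes $\kappa$ defined almost everywhere.

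Next I would employ the thickening trick from Lemma \ref{Lem:integralboundonsmoothmindisk}: set $D_\varepsilon = (D^2, \varepsilon g_{\eucl})$, let $h_\varepsilon$ be the identity and form the conformal, harmonic embedding $f_\varepsilon = (f, h_\varepsilon) : D^2 \to M \times D_\varepsilon$ with image $\Sigma_\varepsilon$. As before,
\[
\lim_{\varepsilon \to 0} \int_{\Sigma_\varepsilon} \sec^{M \times D_\varepsilon}(T\Sigma_\varepsilon)\, d\vol = \int_{D^2} \sec^M(df)\, d\vol_{f^*(g)},
\]
and since $\Sigma_\varepsilon$ is a smooth minimal surface its intrinsic curvature is dominated by the ambient one. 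The central step is then to apply Gauss--Bonnet to $\Sigma_\varepsilon$, which is now a topological disk with exactly $p$ corners on its boundary --- one at each corner of $\gamma$, since the $D_\varepsilon$-component $h_\varepsilon|_{\partial D^2}$ is smooth and introduces no new corners. Denoting by $\theta_i^{(\varepsilon)}$ the exterior angle of $\partial \Sigma_\varepsilon$ at the $i$-th corner, Gauss--Bonnet with corners gives (in the sign convention of Lemma \ref{Lem:integralboundonsmoothmindisk})
\[
\int_{\Sigma_\varepsilon} \sec^{M \times D_\varepsilon}(T\Sigma_\varepsilon)\, d\vol \;\geq\; 2\pi + \int_{\partial \Sigma_\varepsilon} \kappa^{\Sigma_\varepsilon}_{\partial \Sigma_\varepsilon}\, ds - \sum_{i=1}^p \theta_i^{(\varepsilon)}.
\]
Letting $\varepsilon \to 0$, the boundary-integral term converges to $\int_{S^1} \langle \nu(s), \kappa(s) \rangle\, |\gamma'(s)|\, ds$ by exactly the same computation as in the proof of Proposition \ref{Prop:evolminsurfgeneral} (cf.\ \eqref{eq:boundaryintegralsexample}), and the claim will follow once we check that $\theta_i^{(\varepsilon)} \to \theta_i$. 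For this, note that the one-sided tangents of $\partial \Sigma_\varepsilon$ at the $i$-th corner decompose as $(\gamma'_\pm, v_\pm^\varepsilon)$, where the $M$-component $\gamma'_\pm$ is fixed while the $D_\varepsilon$-component $v_\pm^\varepsilon$ has $\varepsilon g_{\eucl}$-norm of order $\varepsilon^{1/2}$; moreover both tangents lie in the tangent plane of $\Sigma_\varepsilon$ at the corner, so the intrinsic angle between them equals the ambient product-metric angle, which tends to the angle $\theta_i$ between $\gamma'_-$ and $\gamma'_+$ in $M$.

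The main obstacle I anticipate is the delicate identification of the one-sided tangents of $\partial \Sigma_\varepsilon$ at the corners in the possible presence of a boundary branch point of $f$ sitting at a corner of $\gamma$; this forces one to look more carefully at the boundary regularity asserted in Proposition \ref{Prop:PlateauProblem}(b) near such points. Once one has verified that the parameterizing map $f_\varepsilon|_{\partial D^2}$ admits well-defined one-sided tangent directions whose $M$-projections are positive multiples of $\gamma'_\pm$ (which must be the case since $f_\varepsilon|_{\partial D^2}$ traces out $\partial \Sigma_\varepsilon$ as a piecewise $C^{1,1}$ curve with the prescribed corner), the angle computation above completes the proof.
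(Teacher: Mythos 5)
Your approach is genuinely different from the paper's. The paper first replaces each corner of $\gamma$ by a short arc of controlled geodesic curvature (of order $\varepsilon^{-1}$ over a length $\approx \theta_i\varepsilon$), producing a sequence of smooth embedded loops $\gamma_k$ converging to $\gamma$; it then solves the Plateau problem for each $\gamma_k$, applies Lemma~\ref{Lem:integralboundonsmoothmindisk} verbatim, and passes to the limit using Proposition~\ref{Prop:PlateauProblem}(c). The $-\theta_1-\cdots-\theta_p$ deficit arises from the bound $\limsup_k\int|\kappa_k-\kappa|\,|\gamma_k'|\,ds\le\sum\theta_i$ on the extra geodesic curvature introduced by the smoothing arcs. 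You instead try to apply the thickening trick directly to the singular $\gamma$ and invoke Gauss--Bonnet with corners on $\Sigma_\varepsilon$.

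Unfortunately, the gap you flag at the end is a real one, and the closing parenthetical ``which must be the case since $f_\varepsilon|_{\partial D^2}$ traces out $\partial\Sigma_\varepsilon$ as a piecewise $C^{1,1}$ curve with the prescribed corner'' is false. The solution $f|_{S^1}=\gamma\circ\varphi$ involves a reparameterization $\varphi$ that is, in general, \emph{not} differentiable at the corners of $\gamma$: near a corner with interior angle $\alpha<\pi$, the conformal parameterization of a minimal disk degenerates like $|s-s_0|^{\pi/\alpha}$ (a power $>1$), so $\varphi'_\pm(s_0)=0$. Consequently, the one-sided tangents of $f_\varepsilon|_{\partial D^2}=(f|_{\partial D^2},h_\varepsilon|_{\partial D^2})$ at $s_0$ have \emph{vanishing} $M$-component and reduce to the (common) $D_\varepsilon$-direction. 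Thus $\partial\Sigma_\varepsilon$ is actually a $C^1$ curve with \emph{no} corner there, and $\theta_i^{(\varepsilon)}=0$, not $\to\theta_i$. The correction $-\theta_i$ does not disappear from Gauss--Bonnet; it is instead concentrated in the boundary geodesic-curvature integral $\int_{\partial\Sigma_\varepsilon}\kappa^{\Sigma_\varepsilon}_{\partial\Sigma_\varepsilon}$ over a small transition zone around the corner, where the unit tangent rotates from roughly $(\gamma'_-,0)$ through $(0,1)$ to $(\gamma'_+,0)$. In particular, the other intermediate claim in your proof --- that $\int_{\partial\Sigma_\varepsilon}\kappa^{\Sigma_\varepsilon}_{\partial\Sigma_\varepsilon}\,ds\to\int_{S^1}\langle\nu,\kappa\rangle|\gamma'|\,ds$ ``by exactly the same computation as in~(\ref{eq:boundaryintegralsexample})'' --- is also wrong; the cited computation was carried out for a smooth boundary loop and does not account for the concentration near corners. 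So while the two errors plausibly cancel in the end, the proof as written contains two false intermediate statements and is not correct as it stands. Carrying your approach to completion would require a quantitative analysis of the conformal parameterization near the corners (power-law asymptotics and the concentration of $\int\kappa^{\Sigma_\varepsilon}$ there); the paper's smooth-approximation route is precisely designed to avoid this.

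Also, when you apply Proposition~\ref{Prop:PlateauProblem}(b) on arcs away from the corners, you do get $C^{1,\alpha}$ regularity up to the boundary there, but this says nothing near the corners --- that is exactly where the needed information lies, so (b) cannot rescue the argument.
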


\begin{proof}
The proof uses an approximation method.

Let $s_1, \ldots, s_p \in S^1$ be the places where $\gamma$ is not differentiable and choose a small constant $\varepsilon > 0$.
Observe that there is a function $\phi : (0,1) \to (0,1)$ with $\lim_{x \to 0} \phi(x) = 0$ (which may depend on $(M,g)$ and $\gamma$) such that:
We can replace $\gamma$ in a small neighborhood of each $s_i$ by a small arc of length $\leq ( \theta_i + \phi(\varepsilon)) \varepsilon$ and geodesic curvature bounded by $\varepsilon^{-1}$ such that the resulting curve $\gamma^* : S^1 \to M$ is a $C^1$-immersion.
It then follows that if $\kappa^* : S^1 \to M$ is almost everywhere equal to the geodesic curvature of $\gamma^*$, we have
\[ \int_{S^1} \big| \kappa^*(s) - \kappa (s) \big| \cdot |\gamma^{* \prime} (s)| ds \leq \theta_1 + \ldots + \theta_p + p \phi(\varepsilon) + p C \varepsilon. \]
Here $C$ is a $C^{1,1}$ bound on $\gamma$.
Next, we mollify $\gamma^*$ to obtain a smooth immersion $\gamma^{**} : S^1 \to M$ such that if $\kappa^{**} : S^1 \to M$ is the geodesic curvature of $\gamma^{**}$, we have
\[ \int_{S^1} \big| \kappa^{**}(s) - \kappa (s) \big| \cdot |\gamma^{** \prime} (s)| ds \leq \theta_1 + \ldots + \theta_p + p \phi(\varepsilon) + pC \varepsilon + \varepsilon. \]
Finally, we perturb $\gamma^{**}$ to a smooth embedding $\gamma^{***} : S^1 \to M$ whose geodesic curvature $\kappa^{***} : S^1 \to M$ satisfies
\[ \int_{S^1} \big| \kappa^{***}(s) - \kappa (s) \big| \cdot |\gamma^{*** \prime} (s)| ds \leq \theta_1 + \ldots + \theta_p + p \phi(\varepsilon) + pC \varepsilon + 2\varepsilon. \]

These constructions have shown that we can find a sequence $\gamma_1, \gamma_2, \ldots : S^1 \to M$ of smoothly embedded loops with uniform Lipschitz constant that uniformly converge to $\gamma$ and that locally converge on $S^1 \setminus \{ s_1, \ldots, s_q \}$ to $\gamma$ in the $C^{1,\alpha}$ sense such that the geodesic curvatures $\kappa_k : S^1 \to TM$ satisfy
\begin{equation} \label{eq:limsupboundedbythetas}
 \limsup_{k \to \infty} \int_{S^1} \big| \kappa_k (s) - \kappa (s) \big| \cdot |\gamma'_k(s) | ds \leq \theta_1 + \ldots + \theta_q . \end{equation}
Let now $f_1, f_2, \ldots : D^2 \to M$ be solutions of the Plateau problem for these loops.
By Proposition \ref{Prop:PlateauProblem}(b) the maps $f_k$ are smooth up to the boundary.
Moreover, by Proposition \ref{Prop:PlateauProblem}(c) we conclude that, after passing to a subsequence and a possible conformal reparameterization, the maps $f_k : D^2 \to M$ converge uniformly on $D^2$ and smoothly on $\Int D^2$ to a map $f : D^2 \to M$, which solves the Plateau problem for $\gamma$.
By Proposition \ref{Prop:PlateauProblem}(b) the map $f$ has local regularity $C^{1, \alpha}$ up to the boundary away from finitely many points for all $\alpha < 1$.
So by Proposition \ref{Prop:PlateauProblem}(c), the convergence $f_k \to f$ is locally in $C^{1, \alpha}$ away from finitely many points.

We now conclude first that
\begin{equation} \label{eq:interiorcurvboundconvergence}
 \lim_{k \to \infty} \int_{D^2} \sec^M (df_k) d{\vol}_{f^*_k (g)} = \int_{D^2} \sec^M (df) d{\vol}_{f^*(g)}.
\end{equation}
Moreover, if we denote by $\nu_k : S^1 \to M$ the unit normal vectors to $\gamma_k$ that are outward tangential to $f_k$, we obtain that
\begin{equation} \label{eq:limitwithnukbutkappa}
 \lim_{k \to \infty} \int_{S^1} \big\langle \nu_k (s), \kappa(s) \big\rangle \cdot |\gamma'_k(s)| ds= \int_{S^1} \big\langle \nu (s), \kappa(s) \big\rangle \cdot |\gamma'(s)| ds.
\end{equation}
Note also that
\begin{multline*}
 \Big| \int_{S^1} \big\langle \nu_k (s), \kappa_k (s) \big\rangle \cdot |\gamma'_k(s)| ds - \int_{S^1} \big\langle \nu_k (s), \kappa (s) \big\rangle \cdot |\gamma'_k (s)| ds \Big| \\
 \leq \int_{S^1} \big| \kappa_k (s) - \kappa (s) \big| \cdot |\gamma'_k(s) |.
\end{multline*}
Together with (\ref{eq:limsupboundedbythetas}) and (\ref{eq:limitwithnukbutkappa}) this implies 
\[ \liminf_{k \to \infty} \int_{S^1} \big\langle \nu_k (s), \kappa_k (s) \big\rangle \cdot |\gamma'_k(s)| ds \geq  - \theta_1 - \ldots - \theta_q + \int_{S^1} \big\langle \nu (s), \kappa(s) \big\rangle \cdot |\gamma'(s)| ds. \]

Finally, applying Lemma \ref{Lem:integralboundonsmoothmindisk} for each $f_k$, we obtain together with (\ref{eq:interiorcurvboundconvergence}) and the previous estimate that
\begin{multline*}
 \int_{D^2} \sec^M (df) d{\vol}_{f^*(g)} = \lim_{k \to \infty} \int_{D^2} \sec^M (df_k) d{\vol}_{f^*_k (g)} \\
 \geq 2\pi + \liminf_{k \to \infty} \int_{S^1} \big\langle \nu_k (s), \kappa_k (s) \big\rangle \cdot |\gamma'_k(s)| ds \\
 \geq 2\pi - \theta_1 - \ldots - \theta_p + \int_{S^1} \big\langle \nu (s), \kappa(s) \big\rangle \cdot |\gamma'(s)| ds. \qedhere
\end{multline*}
\end{proof}

We can now apply the previous bound together with the results of Proposition \ref{Prop:existenceofVminimizer} to control the time derivative of the quantity $A_t^{(\lambda)}$.
We remark that the proof of this Lemma is again similar to parts of Propositions \ref{Prop:evolsphere}, \ref{Prop:evolminsurfgeneral}.

\begin{Lemma} \label{Lem:evolutionofAlambdaunderRF}
Let $0 < T_1 < T_2 < \infty$ and $(g_t)_{t \in [T_1,T_2)}$ be a smooth solution of the Ricci flow on $M$ on which $\scal_t \geq - \frac{3}{2t}$ for all $t \in [T_1, T_2)$.
Assume that the Ricci curvature of $g_t$ is bounded by some constant $K < \infty$ for all $t \in [T_1, T_2]$.

Let moreover $V$ be a finite simplicial complex whose faces are denoted by $F_1, \ldots, F_n$ and $f_0 : V \to M$ a continuous map such that $f_0 |_{\partial V}$ is a smooth embedding.
At every time $t \in [T_1, T_2)$ let $\gamma_{k, t} : [0, l_{k, t}] \to M$, $(k = 1, \ldots, m_0$) be time-$t$ arclength parameterizations of $f$ restricted to the edges of $\partial V$ and $\kappa_{k, t} : [0, l_{k, t}] \to TM$ the geodesic curvature of each $\gamma_{k, t}$ at time $t$.

Now let $\lambda : [T_1, T_2) \to (0, \infty)$ be a continuously differentiable function such that $\lambda'(t) \leq - K \lambda(t)$ for all $t \in [T_1, T_2)$.
Then we can bound the evolution of the quantity $A^{(\lambda(t))}_t (f_0)$ as follows.
For every $t \in [T_1, T_2)$ we have in the barrier sense:
\[ \frac{d}{dt^+} A^{(\lambda(t))}_t (f_0) \leq \frac{3}{4t} A^{(\lambda(t))}_t (f_0) +  \pi n + \sum_{k=1}^{m_0} \int_0^{l_{k, t}} \big| \kappa_{k, t} (s) \big|_t ds . \]
\end{Lemma}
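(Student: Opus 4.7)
Fix $t_0 \in [T_1, T_2)$. The idea is to construct at time $t_0$ a $C^{1,1}$--regular minimizer on the $1$-skeleton via Proposition \ref{Prop:existenceofVminimizer} (applied in the form of Remark \ref{Rmk:Alambda} with $\lambda = \lambda(t_0)$) and use it as a time-independent test configuration to obtain an upper barrier for $A^{(\lambda(t))}_t(f_0)$. This produces a map $f : V^{(1)} \to M$, Plateau-decomposition loops $\gamma_{j,i} : S^1(l_{j,i}) \to M$ of each face boundary $f|_{\partial F_j}$, and a choice of Plateau solutions $f_{j,i} : D^2 \to M$ (chosen so that Lemma \ref{Lem:integralcurvboundnotsmooth} applies to each of them). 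Define
\[ \phi(t) := \sum_{j=1}^n A_t(f|_{\partial F_j}) + \lambda(t)\,\ell_t(f|_{V^{(1)}}). \]
By Proposition \ref{Prop:existenceofVminimizer}(a) one has $\phi(t_0) = A^{(\lambda(t_0))}_{t_0}(f_0)$, while for $t$ near $t_0$ the estimate $\phi(t) \geq A^{(\lambda(t))}_t(f_0)$ follows by gluing nearly-optimal spanning disks furnished by Proposition \ref{Prop:PlateauProblem}(h) to $f$, using $\pi_2(M) = 0$ to verify homotopy. Since each $A_t(f|_{\partial F_j})$ admits a barrier bound by Proposition \ref{Prop:existenceofVminimizer}(c) and $\lambda(t)\ell_t(f|_{V^{(1)}})$ is smoothly differentiable in $t$, summing yields a barrier derivative for $\phi$, hence for $A^{(\lambda(t))}_t(f_0)$.

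\textbf{Curvature computation and combinatorial absorption.} By Proposition \ref{Prop:existenceofVminimizer}(c) each area derivative is bounded by
\[ \sum_i \int_{D^2} \tfrac{d}{dt}\Big|_{t_0} d{\vol}_{f^*_{j,i}(g_t)} = -\sum_i \int_{D^2} \tr\bigl(\Ric_{t_0}|_{T\Sigma_{j,i}}\bigr)\, d{\vol}. \]
The identity $\tr(\Ric|_{T\Sigma}) = \tfrac12\scal + \sec(T\Sigma)$ together with $\scal_{t_0} \geq -\tfrac{3}{2t_0}$ reduces this to $\tfrac{3}{4t_0}\area(f_{j,i}) - \int \sec^M(df_{j,i})\, d{\vol}$. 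Applying Lemma \ref{Lem:integralcurvboundnotsmooth} to each $f_{j,i}$ gives
\[ \int \sec^M(df_{j,i})\, d{\vol} \;\geq\; 2\pi - \sum_k \theta_{j,i,k} + \int \langle \nu_{j,i}, \kappa_{j,i} \rangle\, ds. \]
The decisive combinatorial input is now that each face $F_j$ is a $2$-simplex, so $f|_{\partial F_j}$ is non-differentiable at most at its three vertices; Proposition \ref{Prop:existenceofVminimizer}(b) then forces $\sum_i p_{j,i} \leq 2 N_j + 1$, with $N_j$ the number of Plateau loops for $F_j$. Using the trivial bound $\theta_{j,i,k} \leq \pi$ one obtains for each face
\[ \sum_i \Bigl(-2\pi + \sum_k \theta_{j,i,k}\Bigr) \;\leq\; -2\pi N_j + (2N_j+1)\pi = \pi, \]
so that summing over $j = 1,\ldots,n$ produces exactly the $\pi n$ term of the claim.

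\textbf{Boundary and length terms.} The aggregated boundary integrals over all loops $\gamma_{j,i}$ are controlled by Proposition \ref{Prop:existenceofVminimizer}(d), which gives
\[ -\sum_{j,i} \int_{S^1(l_{j,i})} \langle \nu_{j,i}, \kappa_{j,i}\rangle\, ds \;\leq\; \sum_{k=1}^{m_0} \int_0^{l_{k,t_0}} |\kappa_{k,t_0}(s)|\, ds. \]
For the length term, the first-variation formula $\frac{d}{dt}\ell_t(\gamma) = -\int \Ric_t(T_t,T_t)\, d\ell_t$ and the uniform bound $|\Ric_t| \leq K$ yield $\frac{d}{dt}\ell_t \leq K\ell_t$, so the hypothesis $\lambda'(t) \leq -K\lambda(t)$ forces
\[ \tfrac{d}{dt}\Big|_{t_0}\bigl[\lambda(t)\ell_t(f|_{V^{(1)}})\bigr] \;\leq\; \bigl(-K\lambda(t_0) + \lambda(t_0) K\bigr)\ell_{t_0}(f|_{V^{(1)}}) = 0. \]
Assembling the three pieces, and using $\sum_j A_{t_0}(f|_{\partial F_j}) \leq A^{(\lambda(t_0))}_{t_0}(f_0)$ to convert the $\tfrac{3}{4t_0}$ term into the desired form, produces the stated barrier bound. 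The main obstacle is the combinatorial absorption step in the previous paragraph: it is precisely the sharp Plateau-decomposition count in Proposition \ref{Prop:existenceofVminimizer}(b), in combination with the Euler-Lagrange identity packaged in (d), that compresses all corner and self-intersection contributions into the clean $\pi n + \sum \int |\kappa_k|$ bound despite the fact that $f$ is only $C^{1,1}$ on the $1$-skeleton and $f|_{\partial F_j}$ may have a very irregular self-intersection set.
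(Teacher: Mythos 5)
Your proof is correct and follows essentially the same route as the paper: minimize at time $t_0$ via Proposition \ref{Prop:existenceofVminimizer} and Remark \ref{Rmk:Alambda}, decompose each face boundary by Proposition \ref{Prop:PlateauProblem}, apply Lemma \ref{Lem:integralcurvboundnotsmooth} and Proposition \ref{Prop:existenceofVminimizer}(b)--(d) to obtain the $\pi n$ term and the boundary curvature term, and show the length contribution is nonpositive using $\lambda'(t)\le -K\lambda(t)$. One small caution: in the combinatorial absorption step you write $\sum_i p_{j,i}\le 2N_j+1$ with $N_j$ the number of Plateau loops for $F_j$, but $N_j$ may be countably infinite; the estimate should be phrased directly as $\sum_i\bigl(-2\pi+\sum_k\theta_{j,i,k}\bigr)\le\pi\sum_i(p_{j,i}-2)\le\pi$, using $\theta_{j,i,k}\le\pi$ and Proposition \ref{Prop:existenceofVminimizer}(b), which is well-posed even when there are infinitely many loops (this is also how the paper proceeds).
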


\begin{proof}
Let $t_0 \in [T_1, T_2]$.
We first apply Proposition \ref{Prop:existenceofVminimizer}(a) (see also Remark \ref{Rmk:Alambda}) at time $t_0$ and obtain a $C^{1,1}$-map $f : V^{(1)} \to M$ that is homotopic to $f_0 |_{V^{(1)}}$ relative $\partial V$ and for which
\[  \sum_{j=1}^n A_{t_0} (f |_{\partial F_j} ) + \lambda(t_0) \ell_{t_0} (f) = A^{(\lambda(t_0))}_{t_0} (f_0). \]
Consider for each $j = 1, \ldots, n$ the loop $f|_{\partial F_j}$ and apply Proposition \ref{Prop:PlateauProblem} to obtain the loops $\gamma_{j,i} : S^1 (l_{j,i}) \to M$.
As in Proposition \ref{Prop:existenceofVminimizer}(b) let $p_{j,i}$ be the number of places where $\gamma_{j,i}$ is not differentiable and let $\kappa_{j,i} : S^1(l_{j,i}) \to TM$ be the geodesic curvature along $\gamma_{j,i}$.
Recall that each $\gamma_{j,i}$ satisfies the Douglas-type condition and that for each $j =1, \ldots, n$
\[ \sum_i A_{t_0} (\gamma_{j,i} ) = A_{t_0} (\gamma_j) \qquad \text{and} \qquad \sum_i (p_{j,i} - 2) \leq 1. \]

Next, we apply Lemma \ref{Lem:integralcurvboundnotsmooth} at time $t_0$ to obtain a solution to the Plateau problem $f_{j,i} : D^2 \to M$ for each $\gamma_{j,i}$ such that for the unit normal vector field $\nu_{j,i} : S^1(l_{j,i}) \to TM$ that is outward pointing tangential to $f_{j,i}$ we have
\[ \int_{D^2} \sec_{t_0}^M (df_{j,i}) d{\vol}_{f_{j,i}^*(g_{t_0})} \geq \pi (2 - p_{j,i}) + \int_{S^1(l_{j,i})} \big\langle \nu_{j,i} (s), \kappa_{j,i} (s) \big\rangle_{t_0}  ds. \]

We can now apply Proposition \ref{Prop:existenceofVminimizer}(c) (or Lemma \ref{Lem:variationofA}) and Proposition \ref{Prop:existenceofVminimizer}(b) to conclude that in the barrier sense for all for all $j = 1, \ldots, n$
\begin{multline*}
 \frac{d}{dt^+} \Big|_{t = t_0} A_t ( f |_{\partial F_j} ) \leq \sum_i \int_{D^2} \frac{d}{dt} \Big|_{t = t_0} d{\vol}_{f^*_{j,i} (g_t)} \\
 = - \sum_i \int_{D^2} \tr_{f^*_{j,i} (g_{t_0})} (\Ric_{t_0} (df_{j,i}, df_{j,i})) d{\vol}_{f^*_{j,i} (g_{t_0})} \displaybreak[1] \\
  =  - \sum_i \bigg( \frac12 \int_{D^2} \big( \scal_{t_0} \circ f_{j,i} \big) d{\vol}_{f^*_{j,i} (g_{t_0}) } + \int_{D_2} \sec^M_{t_0} (df_{j,i}) d{\vol}_{f^*_{j,i} (g_{t_0}) } \bigg) \\
 \leq \frac{3}{4t_0} \sum_i A_{t_0} (\gamma_{j,i}) + \sum_i \pi (p_{j,i} - 2) - \sum_i \int_{S^1(l_{j,i})} \big\langle \nu_{j,i} (s), \kappa_{j,i}(s) \big\rangle ds \\
 \leq \frac{3}{4t_0} A_{t_0} (\gamma_j) + \pi  - \sum_i \int_{S^1(l_{j,i})} \big\langle \nu_{j,i} (s), \kappa_{j,i}(s) \big\rangle ds.
\end{multline*}
Now Proposition \ref{Prop:existenceofVminimizer}(d) implies that if we sum this inequality over all $j = 1, \ldots, n$, then the integral term can be estimated by a boundary integral:
\[ \frac{d}{dt^+} \Big|_{t = t_0} \sum_{j = 1}^n A_t ( f|_{\partial F_j} ) \leq \frac{3}{4t_0} \sum_{j=1}^n A_{t_0} (\gamma_j) + \pi n + \sum_{k = 1}^{m_0} \int_0^{l_{k, t_0}} \big| \kappa_{k, t_0} (s) \big|_{t_0} ds. \]

It remains to estimate the distortion of the length of $f$.
Since the Ricci curvature is bounded by $K$ on $[T_1, T_2]$, we find
\[ \frac{d}{dt}  \Big|_{t = t_0} \big( \lambda(t) \ell_t (f) \big) \leq - K \lambda(t_0) \ell_{t_0} (f) + \lambda(t_0) \cdot K \ell_{t_0} (f) \leq 0. \]
Finally, observe that for all $t \geq t_0$ we have by Lemma \ref{Lem:existenceon1skeleton}
\[ A^{(\lambda(t))}_t (f_0) \leq \sum_{j = 1}^n A_t ( f|_{\partial F_j} ) + \lambda(t) \ell_t (f). \]
The equality is strict for $t = t_0$ and the time derivative of the right hand side is bounded by exactly the desired term in the barrier sense.
This finishes the proof of the Lemma.
\end{proof}

Letting the parameter $\lambda$ go to zero yields the following estimate, which does not require a global curvature bound.

\begin{Lemma} \label{Lem:evolofAunderRF}
Let $0 < T_1 < T_2 \leq \infty$ and $(g_t)_{t \in [T_1,T_2)}$ be a smooth solution of the Ricci flow on $M$ on which $\scal_t \geq - \frac{3}{2t}$ for all $t \in [T_1, T_2)$.

Let moreover $V$ be a finite simplicial complex whose faces are denoted by $F_1, \ldots, F_n$ and $f_0 : V \to M$ a continuous map such that $f_0 |_{\partial V}$ is a smooth immersion.
At every time $t \in [T_1, T_2)$ let $\gamma_{k, t} : [0, l_{k, t}] \to M$, $(k = 1, \ldots, m_0$) be time-$t$ arclength parameterizations of $f_0$ restricted to the edges of $\partial V$ and $\kappa_k : [0, l_{k, t}] \to TM$ the geodesic curvature of each $\gamma_{k, t}$ at time $t$.

Then we can bound the evolution of $A_t (f_0)$ as follows in the barrier sense:
\[ \frac{d}{dt^+} A_t (f_0) \leq \frac{3}{4t} A_t (f_0) +  \pi n + \sum_{k=1}^{m_0} \int_0^{l_{k, t}} \big| \kappa_{k, t} (s) \big|_t ds . \]
\end{Lemma}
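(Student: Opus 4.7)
My strategy is to obtain Lemma \ref{Lem:evolofAunderRF} as the $\lambda_0 \to 0^+$ limit of Lemma \ref{Lem:evolutionofAlambdaunderRF}, using the relations in (\ref{eq:AlambdaandA0}) between the perturbed functionals $A_t^{(\lambda)}(f_0)$ and the unperturbed infimum $A_t(f_0)$. The global Ricci curvature bound demanded by Lemma \ref{Lem:evolutionofAlambdaunderRF} is not a true restriction: since both conclusions are of forward-barrier type at a single time $t_0$, it suffices to work on a short time-interval on which a curvature bound holds automatically.

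Fix $t_0 \in [T_1, T_2)$. By smoothness of $(g_t)$ and $t_0 < T_2$, I choose $\delta > 0$ and $K < \infty$ so that $|\Ric_{g_t}| \leq K$ on the closed interval $[t_0, t_0 + \delta] \subset [T_1, T_2)$. For each $\lambda_0 \in (0, 1)$, set $\lambda(t) := \lambda_0 e^{-K(t - t_0)}$; this is continuously differentiable with $\lambda'(t) = -K\lambda(t)$, so the hypotheses of Lemma \ref{Lem:evolutionofAlambdaunderRF} are satisfied on $[t_0, t_0 + \delta]$. Writing $B_t := \sum_{k=1}^{m_0} \int_0^{l_{k,t}} |\kappa_{k,t}(s)|_t \, ds$, which is continuous in $t$, that lemma gives
\[ \frac{d}{dt^+} A_t^{(\lambda(t))}(f_0) \leq \frac{3}{4t} A_t^{(\lambda(t))}(f_0) + \pi n + B_t \]
in the forward-barrier sense, for every $t \in [t_0, t_0 + \delta)$. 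Since $(g_t)$ is smooth on $[t_0, t_0 + \delta]$, both $t \mapsto A_t^{(\lambda(t))}(f_0)$ and $t \mapsto A_t(f_0)$ are continuous (any near-minimizer at one time remains a near-minimizer at nearby times up to a factor of $1 + o(1)$ in its area). A standard ODE-comparison argument then converts the pointwise barrier inequality into the integrated bound
\[ A_t^{(\lambda(t))}(f_0) \leq A_{t_0}^{(\lambda_0)}(f_0) \Big(\frac{t}{t_0}\Big)^{3/4} + \int_{t_0}^t \Big(\frac{t}{s}\Big)^{3/4}(\pi n + B_s)\, ds \]
valid for $t \in [t_0, t_0 + \delta]$; concretely, one compares $A_t^{(\lambda(t))}(f_0)$ with the explicit solution of the linear ODE $v' = \tfrac{3}{4t} v + \pi n + B_t$ using the standard maximum-principle argument for the barrier sense.

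Finally I let $\lambda_0 \to 0^+$. On the left, $A_t^{(\lambda(t))}(f_0) \geq A_t(f_0)$ by (\ref{eq:AlambdaandA0}); on the right, $\lim_{\lambda_0 \to 0^+} A_{t_0}^{(\lambda_0)}(f_0) = A_{t_0}(f_0)$, again by (\ref{eq:AlambdaandA0}). Passing to the limit gives
\[ A_t(f_0) \leq A_{t_0}(f_0) \Big(\frac{t}{t_0}\Big)^{3/4} + \int_{t_0}^t \Big(\frac{t}{s}\Big)^{3/4}(\pi n + B_s)\, ds. \]
Let $\phi(t)$ denote the right-hand side of this inequality. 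Then $\phi$ is smooth on $[t_0, t_0+\delta]$, satisfies $\phi(t_0) = A_{t_0}(f_0)$, dominates $A_t(f_0)$ on $[t_0, t_0+\delta]$, and has derivative $\phi'(t_0) = \tfrac{3}{4t_0} A_{t_0}(f_0) + \pi n + B_{t_0}$, which is exactly the right-hand side claimed by the lemma; so $\phi$ is a forward barrier of the required form at $t_0$.

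The main (mild) technical obstacles are the ODE-comparison step and the continuity of $A_t^{(\lambda(t))}(f_0)$ in $t$, both of which are routine given the smoothness of the flow on $[t_0, t_0 + \delta]$. A secondary point is that Lemma \ref{Lem:evolutionofAlambdaunderRF} requires $f_0|_{\partial V}$ to be a smooth embedding whereas here it is only a smooth immersion; this is handled by approximating $f_0|_{\partial V}$ by smooth embeddings (which is possible by generic perturbation, since $\dim \partial V = 1 < \dim M$) and observing that $A_t(\cdot)$, $\ell_t(\cdot)$ and the curvature integral $B_t$ depend continuously on such perturbations, so one may apply the argument above to the approximating data and pass to the limit.
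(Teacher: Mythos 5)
Your proposal is essentially the same proof as the paper's: both restrict to a time interval with a Ricci-curvature bound $K$, apply Lemma~\ref{Lem:evolutionofAlambdaunderRF} with an exponentially decaying $\lambda(t)$, integrate the resulting barrier inequality via ODE comparison, let $\lambda \to 0$ using (\ref{eq:AlambdaandA0}), and reduce from immersions to embeddings by a perturbation argument. The only differences are cosmetic (writing the ODE solution explicitly via Duhamel's formula rather than leaving it implicit, and placing the perturbation-to-embedding step at the end rather than the start).
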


\begin{proof}
Note that by a perturbation argument we only need to consider the case in which $f_0 |_{\partial V}$ is an embedding.
Moreover, we can without loss of generality restrict to a time-interval on which the Ricci curvature is bounded by some constant $K < \infty$.
For brevity set
\[ R_t = \pi n + \sum_{k=1}^{m_0} \int_0^{l_{k, t}} \big| \kappa_{k, t} (s) \big|_t ds. \]
Note that $R_t$ is continuous with respect to $t$.
Let $\varepsilon > 0$ be a small constant and apply Lemma \ref{Lem:evolutionofAlambdaunderRF} with $\lambda(t) = \varepsilon \exp(-Kt)$.
We obtain
\[ \frac{d}{dt^+} A^{(\varepsilon \exp(-Kt))}_t (f_0) \leq \frac{3}{4t} A^{(\varepsilon \exp(-Kt))}_t (f_0) + R_t. \]

Let now $t_0 \in [T_1, T_2)$ and consider the solution of the differential equation
\[ \frac{d}{dt} F_{t_0, \varepsilon} (t) = \frac{3}{4t} F_{t_0, \varepsilon} (t) + R_t \qquad \text{and} \qquad F_{t_0, \varepsilon} (t_0) = A_{t_0}^{(\varepsilon \exp (-Kt_0))} (f_0). \]
It follows that
\[  A_t^{(\varepsilon \exp(-K t))} (f_0) \leq F_{t_0, \varepsilon} (t) \qquad \text{for all} \qquad t \geq t_0. \]
Letting $\varepsilon \to 0$ and using the fact that that $\lim_{\lambda \to 0} A^{(\lambda)}_t (f_0) = A_t(f_0)$ yields
\[ A_t (f_0) \leq F_{t_0, 0} (t) \qquad \text{for all} \qquad t \geq t_0 \]
where $F_{t_0, 0}$ satisfies the differential equation
\[ \frac{d}{dt} F_{t_0, 0} (t) = \frac{3}{4t} F_{t_0, 0} (t) + R_t \qquad \text{and} \qquad F_{t_0, 0} (t_0) = A_{t_0} (f_0). \]
So $F_{t_0, 0} (t)$ is a barrier for $A_t (f_0)$ with the required properties.
\end{proof}

We can finally state our third main result.

\begin{Proposition} \label{Prop:areaevolutioninMM}
Let $\MM$ be a Ricci flow with surgery with precise cutoff defined on a time-interval $[T_1, T_2)$ (where $0 < T_1 < T_2 \leq \infty)$, assume that all surgeries are trivial and assume that $\pi_2 (\MM(t)) = 0$ for all $t \in [T_1, T_2)$.
Consider a finite simplicial complex $V$ whose faces are denoted by $F_1, \ldots, F_n$.

Let $f_0 : V \to \MM (T_1)$ be a continuous map such that $f_{0,0} = f_0 |_{\partial V}$ is a smooth immersion.
Consider a smooth family of immersions $f_{0, t} : \partial V \to \MM(t)$ parameterized by time that extend $f_{0,0}$ and that don't meet any surgery points.
Assume moreover that there is a constant $\Gamma < \infty$ such that for each $t \in [T_1, T_2)$ the following is true:
Let $\gamma_{k, t} : [0, l_{k, t}] \to \MM(t)$, $(k = 1, \ldots, m_0$) be time-$t$ arclength parameterizations of $f_{0,t}$ restricted to the edges of $\partial V$ and $\kappa_k : [0, l_{k, t}] \to T\MM(t)$ the geodesic curvature of each $\gamma_{k, t}$ at time $t$.
Then
\[ \sum_{k=1}^{m_0} \int_0^{l_{k, t}} \big( \big| \kappa_{k, t} (s) \big|_t + \big| \partial_t \gamma_{k,t}^\perp (s) \big|_t \big) ds \leq \Gamma. \]
Here $\partial_t \gamma_{k,t}^\perp (s)$ is the component of $\partial_t \gamma_{k,t} (s)$ that is perpendicular to $\gamma_{k,t}$.

For every time $t \in [T_1, T_2)$ denote by $A(t)$ the infimum over the areas of all piecewise smooth maps $f : V \to \MM(t_0)$ such that $f |_{\partial V} = f_{0,t}$ and such that there is a homotopy between $f_0$ and $f$ in space-time that restricts to $f_{0,t'}$ on $\partial V$.

Then the quantity
\[ t^{1/4} \big( t^{-1} A(t) - 4 \pi n - 4 \Gamma \big) \]
is monotonically non-increasing on $[T_1, T_2)$ and if $T_2 = \infty$, we have
\[ \limsup_{t \to \infty} t^{-1} A(t) \leq  4 \pi n + 4 \Gamma. \]
\end{Proposition}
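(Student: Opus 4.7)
The plan is to establish, at every non-surgery time $t \in [T_1, T_2)$, the forward barrier inequality
\[ \frac{d}{dt^+} A(t) \leq \frac{3}{4t} A(t) + \pi n + \Gamma, \]
and then to extend this bound across surgery times by lower semi-continuity of $A$. Given such a barrier, a direct computation shows
\[ \frac{d}{dt^+}\bigl[t^{1/4}\bigl(t^{-1}A(t) - 4\pi n - 4\Gamma\bigr)\bigr] = t^{-3/4}\Bigl(\tfrac{d}{dt^+}A(t) - \tfrac{3}{4t}A(t) - (\pi n + \Gamma)\Bigr) \leq 0, \]
so $t^{1/4}(t^{-1}A(t) - 4\pi n - 4\Gamma)$ is non-increasing on $[T_1, T_2)$; dividing by $t^{1/4}$ and sending $t \to \infty$ in the case $T_2 = \infty$ then yields the asserted $\limsup$ bound.

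To establish the barrier inequality at a non-surgery time $t_0$, I would adapt the proof of Lemma \ref{Lem:evolutionofAlambdaunderRF} to the moving-boundary setting. Apply Proposition \ref{Prop:existenceofVminimizer} together with Remark \ref{Rmk:Alambda} at time $t_0$ to obtain, for a small parameter $\lambda > 0$, a near-minimizer $\tilde f : V \to \MM(t_0)$ of the $\lambda$-perturbed functional with boundary $f_{0, t_0}|_{\partial V}$. For $h > 0$ small, construct a competitor $\tilde f^h : V \to \MM(t_0 + h)$ as $\tilde f^h = \Phi^X_h \circ \tilde f$, where $X$ is a smooth vector field on $\MM(t_0)$ supported in a small neighborhood of $f_{0, t_0}(\partial V)$ and whose restriction along $f_{0, t_0}(\partial V)$ agrees with $\partial_t f_{0, t}|_{t = t_0}$. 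The derivative $\partial_t \area_t (\tilde f^t)|_{t = t_0}$ then decomposes into three contributions: (i) the interior Ricci contribution, which is bounded by $\frac{3}{4t_0} A(t_0) + \pi n + \sum_k \int |\kappa_{k, t_0}|\,ds$ via the Gau\ss-Bonnet and Euler-Lagrange inputs of Proposition \ref{Prop:existenceofVminimizer}(c), (d) and Lemma \ref{Lem:integralcurvboundnotsmooth}, exactly as in the proof of Lemma \ref{Lem:evolutionofAlambdaunderRF}; (ii) the distortion of the $\lambda$-length term, absorbed by the choice $\lambda'(t) \leq - K \lambda(t)$ on a subinterval with Ricci bound $K$; and (iii) a new boundary-velocity contribution bounded at first order by $\sum_k \int |\partial_t \gamma_{k, t_0}^\perp|_{t_0}\,ds$, since only the component of $X|_{\partial V}$ perpendicular to the boundary curve sweeps out area. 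Adding the three terms, invoking the $\Gamma$-hypothesis, and letting $\lambda \to 0$ via $\lim_{\lambda \to 0} A^{(\lambda)}_t = A(t)$ produces the desired barrier at $t_0$.

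Across a surgery time $T$, lower semi-continuity of $A$ follows as in Propositions \ref{Prop:evolsphere} and \ref{Prop:evolminsurfgeneral}: composing near-minimizers at times $t_k \nearrow T$ with the $(1 + \chi_k)$-Lipschitz maps $\xi_k : \MM(t_k) \to \MM(T)$ from property (5) of \cite[Definition \ref{Def:precisecutoff}]{Bamler-LT-Perelman}, and then correcting the boundary via a short collar whose area cost tends to zero by the smoothness of $t \mapsto f_{0, t}$, gives $A(T) \leq \liminf_k A(t_k)$, which upgrades the pointwise barrier to honest monotonicity on all of $[T_1, T_2)$. The main obstacle I anticipate is the careful justification of step (iii): the crude competitor $\Phi^X_h \circ \tilde f$ a priori picks up a boundary contribution proportional to the \emph{full} velocity norm $|\partial_t \gamma_{k, t}|$, and isolating only the perpendicular component $|\partial_t \gamma_{k, t}^\perp|$ requires observing that the tangential component of $X|_{\partial V}$ merely reparameterizes $\tilde f|_{\partial V}$ without first-order change in area. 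Making this rigorous near vertices of $V^{(1)}$, and near the possibly complicated self-intersections of $\tilde f|_{V^{(1)}}$, where $\tilde f$ has regularity only $C^{1,1}$, requires a Plateau-type approximation argument in the spirit of Lemma \ref{Lem:integralcurvboundnotsmooth} and Proposition \ref{Prop:PlateauProblem}, relying on the variational framework already developed in Section \ref{sec:existenceofvminimizers}.
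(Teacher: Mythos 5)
Your proposal is correct and takes essentially the same route as the paper: reduce to the barrier inequality $\frac{d}{dt^+}A(t) \leq \frac{3}{4t}A(t) + \pi n + \Gamma$ at non-surgery times by applying the $\lambda$-perturbed minimization machinery of Proposition~\ref{Prop:existenceofVminimizer} and Lemma~\ref{Lem:evolutionofAlambdaunderRF} (or its consequence Lemma~\ref{Lem:evolofAunderRF}) together with the first-order area contribution $\int|\partial_t\gamma_{k,t}^\perp|$ of the moving boundary (the ``variational estimate'' à la Lemma~\ref{Lem:variationofV}), and then get monotonicity across surgeries from lower semi-continuity of $A$ via the Lipschitz comparison maps of precise cutoff. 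The paper's own proof is just a terser version of the same argument, black-boxing Lemma~\ref{Lem:evolofAunderRF} rather than re-running the $\lambda\to 0$ limit inside the moving-boundary computation as you do.
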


\begin{proof}
Note that the property of having precise cutoff implies that the metric $g(t)$ has $t^{-1}$-positive curvature, which in turn entails that $\scal_t \geq -\frac{3}{2t}$ (see \cite[Definitions \ref{Def:phipositivecurvature}, \ref{Def:precisecutoff}(1)]{Bamler-LT-Perelman}).
Note also that we can mollify each $f : V \to \MM (t)$ that is $C^1$ on $V^{(1)}$ and $V \setminus V^{(1)}$ and that is $W^{1,2}$ on each face of $V$ to a map that is piecewise smooth.
So $A(t) = A_t (f_0)$.

So the monotonicity of the desired quantity away from surgery times follows directly from Lemma \ref{Lem:evolofAunderRF} together with a variational estimate dealing with the fact that $f_{0,t}$ can move in time (similarly as in the proof of Lemma \ref{Lem:variationofV}).
By \cite[Definition \ref{Def:precisecutoff}]{Bamler-LT-Perelman} the the value of $A(t)$ cannot increase under a surgery, i.e. the function $A(t)$ is lower semi-continuous.
\end{proof}

\end{document}